\DeclareMathAlphabet{\mathpzc}{OT1}{pzc}{m}{it}
\numberwithin{equation}{section}
\begin{document}
 
\title{{\bf  Periodic points of algebraic functions related to a continued fraction of Ramanujan}}       % Enter your title between curly braces
\author{Sushmanth J. Akkarapakam and Patrick Morton}        % Enter your name between curly braces
\date{Sept. 23, 2022}          % Enter your date or \today between curly braces
\maketitle

\begin{abstract}  A continued fraction $v(\tau)$ of Ramanujan is evaluated at certain arguments in the field $K = \mathbb{Q}(\sqrt{-d})$, with $-d \equiv 1$ (mod $8$), in which the ideal $(2) = \wp_2 \wp_2'$ is a product of two prime ideals.  These values of $v(\tau)$ are shown to generate the inertia field of $\wp_2$ or $\wp_2'$ in an extended ring class field over the field $K$.  The conjugates over $\mathbb{Q}$ of these same values, together with $0, -1 \pm \sqrt{2}$, are shown to form the exact set of periodic points of a fixed algebraic function $\hat F(x)$, independent of $d$.  These are analogues of similar results for the Rogers-Ramanujan continued fraction.
\end{abstract}

\section{Introduction}

This paper is concerned with values of Ramanujan's continued fraction
$$v(\tau) = \frac{q^{1/2}}{1+q+} \ \frac{q^2}{1+q^3+} \ \frac{q^4}{1+q^5+} \ \frac{q^6}{1+q^7+}\dots, \ \ q = e^{2\pi i \tau},$$
which is also given by the infinite product
$$v(\tau)=q^{1/2}\prod_{n=1}^\infty\big(1-q^n\big)^{\left(\frac{2}{n}\right)}, \ \ q = e^{2\pi i \tau},$$
for $\tau$ in the upper half-plane.  Here, $\left(\frac{2}{n}\right)$ is the Kronecker symbol.  See \cite{g}, \cite[p. 153]{d}.  The continued fraction $v(\tau)$ is analogous to the Rogers-Ramanujan continued fraction
$$r(\tau) = q^{1/5}\prod_{n=1}^\infty\big(1-q^n\big)^{\left(\frac{5}{n}\right)}\ \ q = e^{2\pi i \tau},$$
whose properties were considered in the papers \cite{m2}, \cite{m3}.  In \cite{m2} it was shown that certain values of $r(\tau)$, for $\tau$ in the imaginary quadratic field $K = \mathbb{Q}(\sqrt{-d})$ with discriminant $-d \equiv \pm 1$ (mod $5$), are periodic points of a fixed algebraic function, independent of $d$, and generate certain class fields $\Sigma_{\mathfrak{f}} \Omega_f$ over $K$.  Here $\Sigma_{\mathfrak{f}}$ is the ray class field of conductor $\mathfrak{f} = \wp_5$ or $\wp_5'$ over $K$, where $(5) = \wp_5 \wp_5'$ in the ring of integers $R_K$ of $K$; and $\Omega_f$ is the ring class field of conductor $f$ corresponding to the order $\textsf{R}_{-d}$ of discriminant $-d= \mathfrak{d}_K f^2$ in $K$ ($\mathfrak{d}_K$ is the discriminant of $K$). \medskip

Here we will show that a similar situation holds for certain values of the continued fraction $v(\tau)$.  We consider discriminants of the form $-d \equiv 1$ (mod $8$) and arguments in the field $K = \mathbb{Q}(\sqrt{-d})$.  Let $R_K$ be the ring of integers in this field and let the prime ideal factorization of (2) in $R_K$ be $(2) = \wp_2 \wp_2'$.  We define the algebraic integer $w$ by
\begin{equation}
w = \frac{a+\sqrt{-d}}{2}, \ \ a^2+d \equiv 0 \ (\textrm{mod} \ 2^5), \ \ (N(w),f) = 1,
\label{eqn:1.1}
\end{equation}
where $\wp_2 = (2,w)$.  Also, the positive (and odd) integer $f$ is defined by $-d = \mathfrak{d}_K f^2$, where $\mathfrak{d}_K$ is the discriminant of $K/\mathbb{Q}$.
\medskip

We will show that
$$v(w/8) = \pm \frac{1 \pm \sqrt{1+\pi^2}}{\pi},$$
where $\pi$ is a generator in $\Omega_f$ of the ideal $\wp_2$ (or rather, its extension $\wp_2 R_{\Omega_f}$ in $\Omega_f$).  The algebraic integer $\pi$ and its conjugate $\xi$ in $\Omega_f$ were studied in \cite{lm} and shown to satisfy
\begin{equation}
\pi^4+\xi^4 = 1, \ \ (\pi) = \wp_2, \ (\xi) = \wp_2', \ \ \xi = \frac{\pi^{\tau^2}+1}{\pi^{\tau^2}-1},
\label{eqn:1.2}
\end{equation}
where $\tau = \left(\frac{\Omega_f/K}{\wp_2}\right)$ is the Artin symbol (Frobenius automorphism) for the prime ideal $\wp_2$ and the ring class field $\Omega_f$ over $K$ whose conductor is $f$.  It follows from results of \cite{lm} that
$$\pi = (-1)^{c} \mathfrak{p}(w),$$
where $c$ is an integer satisfying the congruence
$$c \equiv 1-\frac{a^2+d}{32} \ (\textrm{mod} \ 2)$$
and $\mathfrak{p}(\tau)$ is the modular function $\mathfrak{p}(\tau) = \frac{\mathfrak{f}_2^2(\tau/2)}{\mathfrak{f}^2(\tau/2)}$, defined in terms of the Weber-Schl\"afli functions $\mathfrak{f}_2(\tau), \mathfrak{f}(\tau)$.  (See \cite{we}, \cite{co}, \cite{sch}.)  The above formula for $v(w/8)$ follows from the identity
$$\frac{2}{\mathfrak{p}(8\tau)} = \frac{1-v^2(\tau)}{v(\tau)} = \frac{1}{v(\tau)} - v(\tau),$$
for $\tau$ in the upper half-plane, which we prove in Proposition \ref{prop:5}.  \medskip

As in \cite{m2}, we consider a diophantine equation, namely
$$\mathcal{C}_2: \ X^2+Y^2 = \sigma^2 (1+X^2Y^2), \ \ \sigma = -1+\sqrt{2}.$$
An identity for the continued fraction $v(\tau)$ implies that $(X,Y) = (v(w/8),v(-1/w))$ is a point on $\mathcal{C}_2$.  We prove the following theorem relating the coordinates of this point. \medskip

\noindent {\bf Theorem A.} {\it Let $w$ be given by (\ref{eqn:1.1}) with $\wp_2 = (2,w)$ in $R_K$, and let $-d = \mathfrak{d}_K f^2 \equiv 1$ (mod $8$).}
\begin{enumerate}[(a)]
\item {\it The field $F_1 = \mathbb{Q}(v(w/8)) = \mathbb{Q}(v^2(w/8))$ equals the field $\Sigma_{\mathfrak{\wp_2'^3}} \Omega_f$, where $\Sigma_{\mathfrak{\wp_2'^3}}$ is the ray class field of conductor $\mathfrak{f} = \wp_2'^3$ and $\Omega_f$ is the ring class field of conductor $f$ over the field $K$.  The field $F_1$ is the inertia field for $\wp_2$ in the extended ring class field $L_{\mathcal{O},8} = \Sigma_8 \Omega_f$ over $K$, where $\mathcal{O} = \textsf{R}_{-d}$ is the order of discriminant $-d$ in $K$.}
\item {\it We have $F_2 = \mathbb{Q}(v(-1/w)) = \Sigma_{\mathfrak{\wp_2^3}} \Omega_f$, the inertia field of $\wp_2'$ in $L_{\mathcal{O},8}/K$.}
\item {\it If $\tau_2$ is the Frobenius automorphism $\displaystyle \tau_2 = \left(\frac{F_1/K}{\wp_2}\right)$, then}
\begin{equation}
v(-1/w) =  \frac{v(w/8)^{\tau_2}+(-1)^{c}\sigma}{\sigma v(w/8)^{\tau_2}-(-1)^{c}}.
\label{eqn:1.3}
 \end{equation}
 \end{enumerate}
 \medskip
 
 See Theorems \ref{thm:1}, \ref{thm:3} and \ref{thm:4} and their corollaries. \medskip
 
 From part (c) of this theorem we deduce the following. \medskip
 
 \noindent {\bf Theorem B.} {\it If $w$ and $c$ are as above, then the generator $(-1)^{1+c}v(w/8)$ of the field $\Sigma_{\wp_2'^3} \Omega_f$ over $\mathbb{Q}$ is a periodic point of the multivalued algebraic function $\hat F(x)$ given by 
 \begin{equation}
 \hat F(x) =  -\frac{x^2-1}{2} \pm \frac{1}{2} \sqrt{x^4 - 6x^2 + 1};
 \label{eqn:1.4}
 \end{equation}
 and $v^2(w/8)$ is a periodic point of the algebraic function $\hat T(x)$ defined by}
 \begin{equation}
 \hat T(x) = \frac{1}{2}(x^2-4x+1) \pm \frac{x-1}{2} \sqrt{x^2-6x+1}.
 \label{eqn:1.5}
 \end{equation}
 {\it The minimal period of $(-1)^{1+c}v(w/8)$ (and of $v^2(w/8)$) is equal to the order of the automorphism $\tau_2$ in $\textrm{Gal}(F_1/K)$.  Together with the numbers $0, -1 \pm \sqrt{2}$, the values $(-1)^{1+c}v(w/8)$ and their conjugates over $\mathbb{Q}$ are the only periodic points of the algebraic function $\hat F(x)$ in $\overline{\mathbb{Q}}$ or $\mathbb{C}$.  The only periodic points of $\hat T(x)$ in $\overline{\mathbb{Q}}$ or $\mathbb{C}$ are $0, (-1 \pm \sqrt{2})^2$, and the conjugates of the values $v^2(w/8)$ over $\mathbb{Q}$.}
 \medskip
 
 We understand by a periodic point of the multivalued algebraic function $\hat F(x)$ the following.  Let $f(x,y) = x^2 y+x^2+y^2 - y$ be the minimal polynomial of $\hat F(x)$ over $\mathbb{Q}(x)$.  A periodic point of $\hat F(x)$ is an algebraic  number $a$ for which there exist $a_1, a_2, \dots, a_{n-1} \in \overline{\mathbb{Q}}$ satisfying
 $$f(a,a_1) = f(a_1,a_2) = \cdots = f(a_{n-1}, a) = 0.$$
 See \cite{m1}, \cite{m}.  Thus, if $a \in \overline{\mathbb{Q}}$ is a periodic point of $\hat F(x)$, so are its conjugates over $\mathbb{Q}$, because $f(x,y)$ has coefficients in $\mathbb{Q}$.  We show in Section 8 that $v^2(w/8)$ is actually a periodic point in the usual sense of the single-valued $2$-adic function
 $$T(x) = x^2 - 4x + 2 - (x-1)(x-3) \sum_{k=1}^\infty{C_{k-1} \frac{2^k}{(x-3)^{2k}}},$$
 defined on a subset of the maximal unramified, algebraic extension $\textsf{K}_2$ of the $2$-adic field $\mathbb{Q}_2$.  ($C_{k}$ is the $k$-th Catalan number.)  This follows from the fact that
 $$v(w/8)^{2\tau_2} = T(v(w/8)^2),$$
in the completion $F_{1,\mathfrak{q}} \subset \textsf{K}_2$ of $F_1= \Sigma_{\wp_2'^3} \Omega_f$ with respect to a prime divisor $\mathfrak{q}$ of $\wp_2$ in $F_1$.  This implies that the minimal period of $v^2(w/8)$ with respect to the function $T(x)$ is $n = \textrm{ord}(\tau_2)$. \medskip

These facts are all analogues of the corresponding facts for the Rogers-Ramanujan continued fraction $r(\tau)$ which were proved in \cite{m2} and \cite{m3}. \medskip

An important corollary of the fact that the conjugates of the values $v(w/8)$ in Theorem B are, together with the three fixed points, all the periodic points of the algebraic function $\hat F(x)$, is the following class number formula.  In this formula, $h(-d)$ denotes the class number of the order $\textsf{R}_{-d}$ of discriminant $-d$ in the quadratic field $K = K_d$, and $\mathfrak{D}_{n,2}$ is the finite set of negative discriminants $-d \equiv 1$ (mod $8$) for which the Frobenius automorphism $\tau_2$ in Theorem A has order $n$ in $\textrm{Gal}(F_1/K_d)$ ($F_1 = F_{1,d}$ also depends on $d$):
\begin{equation}
\sum_{-d \in \mathfrak{D}_{n,2}}{h(-d)} = \frac{1}{2} \sum_{k \mid n}{\mu(n/k) 2^k}, \ \ n > 1.
\label{eqn:1.6}
\end{equation}
($\mu(n)$ is the M\"obius function.)  See Theorem \ref{thm:9}.  This fact is the analogue for the prime $p = 2$ of Theorem 1.3 in \cite{m3} for the prime $p=5$, or of Conjecture 1 of that paper for a prime $p > 5$. \medskip

The layout of the paper is as follows.  Section 2 contains a number of $q$-identities (following Ramanujan) and theta function identities which we use to prove identities for various modular functions in Sections 3-5.  In Sections 6 and 7 we prove Theorem A.  The proofs of Theorem B and (\ref{eqn:1.6}) are given in Sections 8 and 9.

\section{Preliminaries.}

As is customary, let us set
$$(a;q)_0:=1,\ \ \ \ \ (a;q)_n:=\prod_{k=0}^{n-1}{(1-aq^k)},\ \ \ \ \ n\ge1$$
and
$$(a;q)_\infty:=\prod_{k=0}^\infty{(1-aq^k)},\ \ \ \ \ |q| \le 1.$$
Ramanujan's general theta function $f(a,b)$ is defined as
\begin{equation}
f(a,b):=\sum_{n=-\infty}^\infty{a^{n(n+1)/2}\,b^{n(n-1)/2}}.
\label{eqn:2.1}
\end{equation}
Three special cases are defined, in Ramanujan's notation, as
\begin{align}
\label{eqn:2.2} \varphi(q):=f(q,q)&=\sum_{n=-\infty}^\infty{q^{n^2}},\\
\label{eqn:2.3} \psi(q):=f(q,q^3)&=\sum_{n=0}^\infty{q^{n(n+1)/2}},\\
\label{eqn:2.4} f(-q):=f(-q,-q^2)&=\sum_{n=-\infty}^\infty{(-1)^n\,q^{n(3n-1)/2}}.
\end{align}
The Jacobi's triple product identity, in Ramanujan's notation, takes the form
\begin{equation}
f(a,b)=(-a;ab)_\infty(-b;ab)_\infty(ab;ab)_\infty.
\label{eqn:2.5}
\end{equation}
Using this, the above three functions can be written as
\begin{align}
\label{eqn:2.6} \varphi(q)&=(-q;q^2)_\infty^2(q^2;q^2)_\infty,\\
\label{eqn:2.7} \psi(q)&=(-q;q)_\infty(q^2;q^2)_\infty=\frac{(q^2;q^2)_\infty}{(q;q^2)_\infty},\\
\label{eqn:2.8} f(-q)&=(q;q)_\infty.
\end{align}
The equality that relates the right hand sides of both the equations for $f(-q)$ in (\ref{eqn:2.4}) and (\ref{eqn:2.8}) is Euler's pentagonal number theorem.\medskip

Another important function that plays a prominent role is given by
\begin{equation}
\chi(q):=(-q;q^2)_\infty.
\label{eqn:2.9}
\end{equation}
All the above four functions satisfy a myriad of relations, most of which are listed and proved in Berndt's books on Ramanujan's notebooks, and we will refer to them as needed.\medskip

Last but not least, the Dedekind-eta function is defined as
\begin{equation}
\eta(\tau)=q^{1/24}\,f(-q),\ \ \ \ \ q=e^{2\pi i\tau},\ \ \ \ \ \textrm{Im} \ \tau>0.
\label{eqn:2.10}
\end{equation}
Most of the identities that we use later on are listed here in order, for the sake of convenience.
\begin{align}
\label{eqn:2.11} \varphi^2(q)+\varphi^2(-q)&=2\varphi^2(q^2),\\
\label{eqn:2.12} \varphi^4(q)-\varphi^4(-q)&=16q\psi^4(q^2),\\
\label{eqn:2.13} \varphi(q)\psi(q^2)&=\psi^2(q),\\
\label{eqn:2.14} \varphi(-q)+\varphi(q^2)&=2\frac{f^2(q^3,q^5)}{\psi(q)},\\
\label{eqn:2.15} \varphi(-q)-\varphi(q^2)&=-2q\frac{f^2(q,q^7)}{\psi(q)},\\
\label{eqn:2.16} \varphi(q)\varphi(-q)&=\varphi^2(-q^2),\\
\label{eqn:2.17} \varphi(q)+\varphi(-q)&=2\varphi(q^4),\\
\label{eqn:2.18} \varphi^2(q)-\varphi^2(-q)&=8q\psi^2(q^4).
\end{align}
\smallskip

\noindent All of these above identities and their proofs can be found in \cite[p. 40, Entry 25]{b1} and in \cite[p. 51, Example (iv)]{b1}.\bigskip

\noindent For $\tau\in\mathcal{H}$, the upper half plane, and $q=e(\tau)=e^{2\pi i\tau}$, the theta constant with characteristic $\big[{\epsilon\atop\epsilon'}\big]\in\mathbb{R}$ is defined as
\begin{equation}
\theta\left[{\epsilon\atop\epsilon'}\right](\tau)= \sum_{n\in\mathbb{Z}}{e\big(\frac{1}{2}(n+\frac{\epsilon}{2})^2\tau+\frac{\epsilon'}{2}(n+\frac{\epsilon}{2})\big)}.
\label{eqn:2.19}
\end{equation}
It satisfies the following basic properties for $l,m,n\in\mathbb{Z}$ with $N$ positive:
\begin{align}
\label{eqn:2.20} \theta\left[{\epsilon\atop\epsilon'}\right](\tau)&= e\big(\mp\frac{\epsilon m}{2}\big)\ \theta\left[{\pm\epsilon+2l\atop\pm\epsilon'+2m}\right](\tau),\\
\label{eqn:2.21} \theta\left[{\epsilon\atop\epsilon'}\right](\tau)&=\sum_{k=0}^{N-1}{\theta\left[{\frac{\epsilon+2k}{N}\atop N\epsilon'}\right]\big(N^2\tau\big)}.
\end{align}
We also have the transformation law, for $\bigl( \begin{smallmatrix}a&b\\c&d\end{smallmatrix}\bigr)\in\rm{SL}_2(\mathbb{Z})$:
\begin{equation}
\theta\left[{\epsilon\atop\epsilon'}\right] \big(\frac{a\tau+b}{c\tau+d}\big)=\kappa\sqrt{c\tau+d}\ \theta\left[{a\epsilon+c\epsilon'-ac\atop b\epsilon+d\epsilon'+bd}\right](\tau),
\label{eqn:2.22}
\end{equation}
where
$$\kappa=\textstyle e\big(-\frac{1}{4}(a\epsilon+c\epsilon')bd-\frac{1}{8}(ab\epsilon^2+cd\epsilon'^2+2bc\epsilon\epsilon')\big)\kappa_0,$$
with $\kappa_0$ an eighth root of unity depending only on the matrix $\bigl( \begin{smallmatrix}a&b\\c&d\end{smallmatrix}\bigr)$.\medskip

\noindent In particular, we have:
\begin{align}
\label{eqn:2.23} \theta\left[{\epsilon\atop\epsilon'}\right](\tau+1)&=\textstyle e\big(-\frac{\epsilon}{4}(1+\frac{\epsilon}{2})\big)\ \displaystyle\theta\left[{\epsilon\atop\epsilon+\epsilon'+1}\right](\tau),\\
\label{eqn:2.24} \theta\left[{\epsilon\atop\epsilon'}\right]\textstyle\big(\frac{-1}{\tau}\big)&=\textstyle e\big(-\frac{1}{8}\big)\ \sqrt{\tau}\ e\big(\frac{\epsilon\epsilon'}{4}\big)\ \displaystyle\theta\left[{\epsilon'\atop-\epsilon}\right](\tau).
\end{align}
We also have the product formula:
\begin{equation}
\theta\left[{\epsilon\atop\epsilon'}\right](\tau)= e\big(\frac{\epsilon\epsilon'}{4}\big) q^{\frac{\epsilon^2}{8}}\prod_{n\ge1}{\big(1-q^n\big)\big(1+e\left(\frac{\epsilon'}{2}\right) q^{n-\frac{1+\epsilon}{2}}\big) \big(1+e\left(\frac{-\epsilon'}{2}\right) q^{n-\frac{1-\epsilon}{2}}\big)},
\label{eqn:2.25}
\end{equation}
which follows from the Jacobi's triple product identity.\medskip

\noindent More information about these theta constants and the above formulas, as well as their proofs, can all be found in \cite[pp. 71-81]{fk}. Or also see \cite[pp. 143, 158-159]{d}.
\medskip

\section{Identities for $u(\tau)$ and $v(\tau)$}

Let us define the functions $u(\tau)$ and $v(\tau)$ as
$$u(\tau)=\sqrt{2}\,q^{1/8}\prod_{n=1}^\infty{\big(1+q^n\big)^{(-1)^n}},$$
$$v(\tau)=q^{1/2}\prod_{n=1}^\infty{\big(1-q^n\big)^{\left(\frac{8}{n}\right)}}.$$\medskip

The functions $u(\tau)$ and $v(\tau)$ satisfy the following identities. \bigskip

\newtheorem{prop}{Proposition}

\begin{prop}
(a) If $x = u(\tau)$ and $y = u(2\tau)$, we have
$$x^4(y^4+1) = 2y^2.$$
(b) If $x = v(\tau)$ and $y = v(2\tau)$, we have
$$x^2 y+x^2+y^2 = y.$$
\label{prop:1}
\end{prop}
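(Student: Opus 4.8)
The plan is to reduce each of the two modular equations to a handful of the catalogued $q$-series identities \eqref{eqn:2.11}--\eqref{eqn:2.18} by first rewriting $u^2(\tau)$ and $v^2(\tau)$ in closed form as simple combinations of Ramanujan's functions $\varphi$ and $\psi$. The representation step is the crux; after it, each claim is a few lines of algebra on top of the listed relations.

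For part (b) I would start from the product shape of $v(\tau)$. Splitting the Kronecker symbol $\left(\frac{2}{n}\right)$ into residues mod $8$ and applying the Jacobi triple product \eqref{eqn:2.5} gives $v(\tau)=q^{1/2}\,f(-q,-q^{7})/f(-q^{3},-q^{5})$. Replacing $q$ by $-q$ in \eqref{eqn:2.14} and \eqref{eqn:2.15} turns the right-hand sides into $f^2(-q^{3},-q^{5})/\psi(-q)$ and $q\,f^2(-q,-q^{7})/\psi(-q)$, whose quotient is exactly $v^2(\tau)$; this yields the key closed form
\[
v^2(\tau)=\frac{\varphi(q)-\varphi(q^2)}{\varphi(q)+\varphi(q^2)},\qquad \frac{1-v^2(\tau)}{1+v^2(\tau)}=\frac{\varphi(q^2)}{\varphi(q)}.
\]
Setting $s=\varphi(q^2)/\varphi(q)$ and $t=\varphi(q^4)/\varphi(q^2)$, the same formula at $2\tau$ gives $\frac{1-v^2(2\tau)}{1+v^2(2\tau)}=t$. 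A direct computation shows that the \emph{even} consequence of the claim, $(x^2+y^2)^2=y^2(1-x^2)^2$, is equivalent to the single relation $(1-st)^2=s^2(1-t^2)$, i.e. $1-2st+2s^2t^2=s^2$. I would prove this last relation by eliminating $\varphi(-q)$ between \eqref{eqn:2.11} and \eqref{eqn:2.17}: substituting $\varphi(-q)=2\varphi(q^4)-\varphi(q)$ into \eqref{eqn:2.11} gives $\varphi^2(q)-2\varphi(q)\varphi(q^4)+2\varphi^2(q^4)=\varphi^2(q^2)$, which is precisely $1-2st+2s^2t^2=s^2$ after dividing by $\varphi^2(q)$. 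Finally, since the polynomial $f(x,y)=x^2y+x^2+y^2-y$ and its conjugate $f(x,-y)$ multiply to the even relation, I would select the correct factor from the leading terms $v(\tau)=q^{1/2}+\cdots$ and $v(2\tau)=q+\cdots$, for which $f(v(\tau),v(2\tau))$ vanishes while $f(v(\tau),-v(2\tau))$ does not.

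Part (a) is completely parallel but also needs $\psi$. Using \eqref{eqn:2.6}, \eqref{eqn:2.7} and Euler's identity $(-q;q)_\infty=1/(q;q^2)_\infty$ to simplify the product defining $u(\tau)$, I would establish the closed form $u^2(\tau)=2q^{1/4}\psi(q^2)/\varphi(q)$. Substituting $x^2=u^2(\tau)$ and $y^2=u^2(2\tau)$ into $x^4(y^4+1)=2y^2$ and clearing denominators reduces the claim to $\psi^2(q^2)\bigl(4q\psi^2(q^4)+\varphi^2(q^2)\bigr)=\varphi^2(q)\varphi(q^2)\psi(q^4)$. Here \eqref{eqn:2.13} at argument $q^2$, namely $\varphi(q^2)\psi(q^4)=\psi^2(q^2)$, cancels the $\psi$'s and leaves exactly $\varphi^2(q)-\varphi^2(q^2)=4q\psi^2(q^4)$, which itself follows by combining \eqref{eqn:2.11} and \eqref{eqn:2.18}. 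Because (a) involves only even powers of $x$ and $y$, no branch choice is needed.

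The main obstacle in both parts is the representation step: recognizing that $v^2$ and $u^2$ collapse to the simple quotients above, which requires matching the product expansions to the specific entries \eqref{eqn:2.14}--\eqref{eqn:2.15} (for $v$) and \eqref{eqn:2.6}--\eqref{eqn:2.7} (for $u$). Once these are in hand, both modular equations are short algebraic manipulations of the listed identities. The one extra subtlety is in (b), where only $v^2$—not $v$—has a clean $\varphi$-quotient form, so the first-power identity must be pinned to the correct square root via the $q$-expansion.
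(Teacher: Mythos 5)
Your proof is correct, and for part (b) it takes a genuinely different route from the paper. For part (a) the two arguments are essentially the same: both rest on the representation $u(\tau)=\sqrt{2}\,q^{1/8}\psi(q)/\varphi(q)$ (your closed form $u^2(\tau)=2q^{1/4}\psi(q^2)/\varphi(q)$ is this plus \eqref{eqn:2.13}), and both reduce the modular equation to a two-line combination of catalogued theta identities --- you use \eqref{eqn:2.11} with \eqref{eqn:2.18} to get $\varphi^2(q)-\varphi^2(q^2)=4q\psi^2(q^4)$, while the paper combines \eqref{eqn:2.11} with \eqref{eqn:2.12}; this is a cosmetic difference. For part (b) the paper instead imports the known $u$--$v$ relation $u^4(v^2+1)^2+4v(v^2-1)=0$ from Duke, substitutes it into the result of part (a), and factors the resulting polynomial identity by Maple into four factors, picking out the correct one from the $q$-expansions. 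You bypass both the external identity and the machine factorization: your key step is the closed form $v^2(\tau)=\bigl(\varphi(q)-\varphi(q^2)\bigr)/\bigl(\varphi(q)+\varphi(q^2)\bigr)$, obtained from the Jacobi triple product and \eqref{eqn:2.14}--\eqref{eqn:2.15} with $q\mapsto -q$ --- which is in fact exactly the manipulation the paper performs later, in the proof of Proposition \ref{prop:5} --- after which the squared claim $(x^2+y^2)^2=y^2(1-x^2)^2$ collapses to the single relation $\varphi^2(q)-2\varphi(q)\varphi(q^4)+2\varphi^2(q^4)=\varphi^2(q^2)$, a consequence of \eqref{eqn:2.11} and \eqref{eqn:2.17}, and only the two-fold sign ambiguity $f(x,y)f(x,-y)=0$ remains to be resolved by leading terms (versus the paper's four factors). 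What your route buys is a self-contained, hand-checkable proof of (b) that needs neither Duke's identity nor Maple; the cost is that you front-load the triple-product computation that the paper defers to Section 4, and in the last step you should still invoke the identity theorem (as the paper does) to pass from ``$f(v(\tau),v(2\tau))$ vanishes for small $q$'' to vanishing on all of $|q|<1$.
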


\noindent {\bf Remark.} The curve $E: f(x,y) = 0$ defined by
$$f(x,y) = x^2 y+x^2+y^2 - y$$
is an elliptic curve with $j(E) = 1728$, so $E$ has complex multiplication by $\textsf{R} = \mathbb{Z}[i]$.

\begin{proof}
(a) From (\ref{eqn:2.11}), we have
$$\varphi^2(-q)=2\varphi^2(q^2)-\varphi^2(q),$$
where
\begin{equation*}
\varphi(q) =(-q;q^2)_\infty^2(q^2;q^2)_\infty \ \ \textrm{and} \ \ \psi(q) =\frac{(q^2;q^2)_\infty}{(q;q^2)_\infty}
\end{equation*}
are as defined in (\ref{eqn:2.6}) and (\ref{eqn:2.7}).  Squaring both sides gives us
$$\varphi^4(-q)=4\varphi^4(q^2)-4\varphi^2(q)\varphi^2(q^2)+\varphi^4(q).$$
Using
$$\varphi^4(q)-\varphi^4(-q)=16q\psi^4(q^2),$$
which is (\ref{eqn:2.12}), we obtain
$$\varphi^4(q^2)+4q\,\psi^4(q^2)=\varphi^2(q)\,\varphi^2(q^2).$$
Dividing both sides by $\varphi^4(q^2)$ and using the relation $\psi^2(q)=\varphi(q)\,\psi(q^2)$ from (\ref{eqn:2.13})
we get
\begin{equation}
1+4q\frac{\psi^4(q^2)}{\varphi^4(q^2)} = \frac{\varphi^2(q)}{\varphi^2(q^2)} =\frac{\psi^2(q^2)}{\varphi^2(q^2)} \cdot \frac{\varphi^4(q)}{\psi^4(q)}.
\label{eqn:3.1}
\end{equation}
Since
$$u(\tau)=\sqrt{2}q^{1/8}\prod_{n=1}^\infty{(1+q^n)^{(-1)^n}}=\sqrt{2}q^{1/8}\frac{(-q^2;q^2)_\infty}{(-q;q^2)_\infty}=\sqrt{2}q^{1/8}\frac{\psi(q)}{\varphi(q)},$$
the result follows by substituting the last equality for $u(\tau)$ into (\ref{eqn:3.1}). \medskip

\noindent(b) From \cite[p. 153, (9.7)]{d} we have the following relation between $u = u(\tau)$ and $v = v(\tau)$:
\begin{equation}
u^4(v^2+1)^2+4v(v^2-1)=0;
\label{eqn:3.2}
\end{equation}
which can be rewritten as $\displaystyle u^4=\frac{4v(1-v^2)}{(v^2+1)^2}$.  Substituting this expression for $u^4$ into the relation $u^4(\tau)\big[u^4(2\tau)+1\big]=2u^2(2\tau)$, after squaring, we obtain
$$\frac{16x^2(1-x^2)^2}{(x^2+1)^4}\cdot\bigg[\frac{4y(1-y^2)}{(y^2+1)^2}+1\bigg]^2=4\cdot\frac{4y(1-y^2)}{(y^2+1)^2},$$
where $x=v(\tau), y=v(2\tau)$. Clearing the denominators gives us
$$x^2(1-x^2)^2(y^2-2y-1)^4=y(1-y^2)(y^2+1)^2(x^2+1)^4.$$
Now moving everything to one side and factoring the polynomial using Maple, we finally arrive at
\begin{align*}
(x^2y+x^2+y^2-y)&(x^2y^2-x^2y+y+1)(x^2y^2+2xy^2+x^2-4xy+y^2-2x+1)\\
& \times (x^2y^2-2xy^2+x^2+4xy+y^2+2x+1)=0.
\end{align*}
From the definitions of $x$ and $y$, it is clear that $x=O(q^{1/2})$ and $y=O(q)$ as $q$ tends to $0$. Hence, the first factor above (and none of the others) vanishes for $q$ sufficiently small.  By the identity theorem, the first factor vanishes for $|q|<1$. This proves the result.
\end{proof}

\noindent {\bf Remark.}  The identity in part (b) of Proposition \ref{prop:1} can be written as
$$v^2(\tau) = v(2\tau) \frac{1-v(2\tau)}{1+v(2\tau)}.$$
This is analogous to the identity for the Rogers-Ramanujan continued fraction $r(\tau)$:
$$r^5(\tau) = r(5\tau) \frac{r^4(5\tau)-3r^3(5\tau) +4r^2(5\tau) -2r(5\tau) +1}{r^4(5\tau) +2r^3(5\tau) +4r^2(5\tau) +3r(5\tau) +1}.$$
See \cite[p. 167]{b3}, \cite[pp. 19-20]{b2}.
\medskip

\begin{prop} The functions $x=v^2(\tau)$ and $y=v^2(2\tau)$ satisfy the relation
$$g(x,y) = y^2-(x^2-4x+1)y+x^2 = 0.$$
\label{prop:2}
\end{prop}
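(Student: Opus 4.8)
The plan is to eliminate the function $v(2\tau)$ between the relation of Proposition~\ref{prop:1}(b) and the substitution $Y = v^2(2\tau)$, treating everything as an identity among analytic functions of $\tau$ on the disk $|q|<1$. Write $a = v(\tau)$ and $b = v(2\tau)$, and set $X = v^2(\tau) = a^2$, $Y = v^2(2\tau) = b^2$. The key observation is that Proposition~\ref{prop:1}(b), namely $a^2 b + a^2 + b^2 - b = 0$, involves $a$ only through $a^2 = X$ and involves $b$ to even degree only through $b^2 = Y$; the remaining occurrences of $b$ are linear.

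First I would rewrite Proposition~\ref{prop:1}(b) as $X(b+1) + Y - b = 0$, i.e. $b(X-1) = -(X+Y)$, and solve this linear equation for $b$ to get $b = \dfrac{X+Y}{1-X}$. This is legitimate for $q$ near $0$, where $X = v^2(\tau) = O(q) \neq 1$; equivalently, one may start from the form $X = b\,\dfrac{1-b}{1+b}$ recorded in the Remark following Proposition~\ref{prop:1} and clear the denominator $1+b$ to reach the same linear relation in $b$.

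Next I would substitute this expression into the defining equation $Y = b^2$, obtaining $Y(1-X)^2 = (X+Y)^2$. Expanding both sides gives $Y - 2XY + X^2 Y = X^2 + 2XY + Y^2$, and collecting terms yields $Y^2 - (X^2 - 4X + 1)Y + X^2 = 0$, which is precisely $g(X,Y) = 0$. Since each manipulation is a reversible identity of polynomials, this establishes the claimed relation for $q$ sufficiently small, and hence for all $|q|<1$ by the identity theorem.

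I do not expect a genuine obstacle here: the entire argument is a single elimination of $b$, made transparent by the fact that $b$ enters Proposition~\ref{prop:1}(b) linearly once $a^2$ and $b^2$ are renamed $X$ and $Y$. The only point requiring a word of care is the division by $1-X$, which is harmless because $X$ vanishes at the cusp $q=0$, so the final polynomial identity holds on a neighborhood of $q = 0$ and then everywhere by analytic continuation.
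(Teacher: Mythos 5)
Your argument is correct and is essentially the paper's own proof: the paper rewrites Proposition \ref{prop:1}(b) as $x^2+y^2 = y(1-x^2)$ and squares, which is exactly your identity $(X+Y)^2 = Y(1-X)^2$, so the two computations coincide. Your intermediate division by $1-X$ (and the attendant caveat) is avoidable by squaring directly, but this is a cosmetic difference only.
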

\begin{proof}
For $x=v(\tau)$ and $y=v(2\tau)$, we have the relation
$$x^2+y^2=y (1-x^2).$$
Squaring both sides and moving all the terms to the left side, we obtain
$$x^4+y^4+4x^2y^2-x^4y^2-y^2=0.$$
Hence, $x=v^2(\tau)$ and $y=v^2(2\tau)$ satisfy the relation
$$g(x,y)=x^2+y^2+4xy-x^2y-y=0.$$
\end{proof}

Let $A, \bar A$ denote the linear fractional mappings
\begin{equation}
A(x) = \frac{\sigma x +1}{x-\sigma}, \ \ \bar A(x) = \frac{-x+\sigma}{\sigma x+1}, \ \ \sigma = -1+\sqrt{2}.
\label{eqn:3.3}
\end{equation}

\begin{prop} The following identity holds:
\begin{align}
\notag v\left(\frac{-1}{\tau}\right) &= \bar A(v(\tau/4)) = \frac{\bar \sigma v(\tau/4) + 1}{v(\tau/4) - \bar \sigma} = \frac{-v(\tau/4)+\sigma}{\sigma v(\tau/4)+1},
\end{align}
where $\bar \sigma = -1-\sqrt{2}$.
\label{prop:3}
\end{prop}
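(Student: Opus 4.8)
The plan is to realize $v(\tau)$ as a ratio of Jacobi theta constants and then apply the transformation law \eqref{eqn:2.24} for $\tau\mapsto -1/\tau$. This is the one tool capable of handling the $S$-substitution, which — unlike the duplication identities behind Propositions \ref{prop:1}--\ref{prop:2} — is invisible to a $q$-expansion at a single cusp and so cannot be reached by the Maple-plus-identity-theorem method used there.

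First I would rewrite the defining product. Since $\left(\frac{2}{n}\right)$ is supported on odd $n$, equalling $+1$ for $n\equiv\pm1$ and $-1$ for $n\equiv\pm3\pmod 8$, Jacobi's triple product \eqref{eqn:2.5} gives
\[
v(\tau)=q^{1/2}\,\frac{(q;q^8)_\infty(q^7;q^8)_\infty}{(q^3;q^8)_\infty(q^5;q^8)_\infty}=q^{1/2}\,\frac{f(-q,-q^7)}{f(-q^3,-q^5)}.
\]
Expanding each Ramanujan theta as a one-dimensional lattice sum and completing the square identifies it, up to an explicit power of $q$ and a root of unity, with a theta constant of argument $8\tau$; one checks that $f(-q,-q^7)=q^{-9/16}e(3/16)\,\theta\!\left[{-3/4\atop 1}\right](8\tau)$ and $f(-q^3,-q^5)=q^{-1/16}e(1/16)\,\theta\!\left[{-1/4\atop 1}\right](8\tau)$, whence the fractional $q$-powers cancel and
\[
v(\tau)=e(1/8)\,\frac{\theta\!\left[{-3/4\atop 1}\right](8\tau)}{\theta\!\left[{-1/4\atop 1}\right](8\tau)}.
\]
Next I would substitute $\tau\mapsto-1/\tau$, so the argument becomes $-8/\tau=-1/(\tau/8)$, and apply \eqref{eqn:2.24} to numerator and denominator separately. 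The automorphy factor $e(-1/8)\sqrt{\tau/8}$ is common to both and cancels in the quotient, while the characteristic phases $e(\epsilon\epsilon'/4)$ combine with $e(1/8)$ to give $1$ and the characteristics are swapped and negated, leaving $v(-1/\tau)=\theta\!\left[{1\atop 3/4}\right](\tau/8)\big/\theta\!\left[{1\atop 1/4}\right](\tau/8)$.

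The final and hardest step is to recognize this as $\bar A(v(\tau/4))$. The obstruction is that $v(\tau/4)=e(1/8)\,\theta\!\left[{-3/4\atop 1}\right](2\tau)\big/\theta\!\left[{-1/4\atop 1}\right](2\tau)$ is built from theta constants of argument $2\tau$, whereas the transformed expression lives at $\tau/8$ — a factor of $16$ away. I would bridge this gap with the distribution formula \eqref{eqn:2.21} taken with $N=4$ (so $N^2=16$), which expands $\theta\!\left[{1\atop 3/4}\right](\tau/8)$ and $\theta\!\left[{1\atop 1/4}\right](\tau/8)$ as finite sums of theta constants of argument $16\cdot(\tau/8)=2\tau$; the reductions \eqref{eqn:2.20}, \eqref{eqn:2.23} put these into the standard family $\theta\!\left[{\pm1/4\atop\cdot}\right](2\tau),\ \theta\!\left[{\pm3/4\atop\cdot}\right](2\tau)$. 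These sums are then collapsed, using the linear theta relations \eqref{eqn:2.11}--\eqref{eqn:2.18} (equivalently, the addition/duplication identities for $\varphi,\psi$), into a linear-fractional combination of $\theta\!\left[{-3/4\atop 1}\right](2\tau)\big/\theta\!\left[{-1/4\atop 1}\right](2\tau)=e(-1/8)v(\tau/4)$. I expect the constant $\sigma=-1+\sqrt2$ to emerge here as the coefficient in a constant-coefficient theta relation, ultimately traceable to the $\sqrt2$ in the normalization $u(\tau)=\sqrt2\,q^{1/8}\psi(q)/\varphi(q)$ together with relation \eqref{eqn:3.2}; this constant-chasing, rather than any single identity, is the main difficulty. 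As a lighter alternative for this last step, once both sides are brought to a common theta argument the claimed linear-fractional identity becomes an algebraic identity between modular functions on a fixed congruence curve, which can be finished exactly as in Proposition \ref{prop:1} by clearing denominators, comparing $q$-expansions to sufficient order, and invoking the identity theorem.
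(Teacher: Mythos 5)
Your proposal follows essentially the same route as the paper's proof: the theta-quotient representation of $v(\tau)$ (your $e(1/8)\,\theta\left[{-3/4\atop 1}\right](8\tau)/\theta\left[{-1/4\atop 1}\right](8\tau)$ agrees with the paper's $e(-1/8)\,\theta\left[{3/4\atop 1}\right](8\tau)/\theta\left[{1/4\atop 1}\right](8\tau)$ via the reduction (\ref{eqn:2.20})), then the inversion formula (\ref{eqn:2.24}), then the distribution relation (\ref{eqn:2.21}) with $N=4$ to return to argument $2\tau$, followed by simplification to the linear-fractional form. Your derivation of the theta-quotient formula is actually more detailed than the paper's, and the final simplification you describe (with the $q$-expansion fallback) is exactly where the paper also resorts to ``some simplification,'' so the proposal is correct and matches the paper's argument.
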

\begin{proof} This follows from the formula
\begin{equation*}
v(\tau) = e^{-2 \pi i/8} \frac{\theta [{3/4 \atop 1}](8\tau)}{\theta [{1/4 \atop 1}](8\tau)},
\end{equation*}
using the formulas (\ref{eqn:2.20}), (\ref{eqn:2.21}), (\ref{eqn:2.24}).  (Also see \cite{fk}.)  Namely, we have:
$$v\bigg(\frac{-1}{\tau}\bigg)=e^{-2\pi i/8}\frac{\theta\big[{3/4\atop1}\big]\big(\frac{-8}{\tau}\big)}{\theta\big[{1/4\atop1}\big]\big(\frac{-8}{\tau}\big)}=\frac{\theta\big[{1\atop3/4}\big]\big(\frac{\tau}{8}\big)}{\theta\big[{1\atop1/4}\big]\big(\frac{\tau}{8}\big)}=\frac{\sum\limits_{k=0}^3\theta\left[{\frac{1+2k}{4}\atop3}\right]\big(2\tau)}{\sum\limits_{k=0}^3\theta\left[{\frac{1+2k}{4}\atop1}\right](2\tau)},$$
which after some simplification yields
$$v\bigg(\frac{-1}{\tau}\bigg)=\frac{\big[-1+e^{3\pi i/8}\big]v(\tau/4)+\big[e^{2\pi i/8}+e^{3\pi i/2}\big]}{\big[e^{2\pi i/8}+e^{3 \pi i/2}\big]v(\tau/4)+\big[1+e^{7\pi i/8}\big]}.$$
This yields that
$$v\bigg(\frac{-1}{\tau}\bigg)=\frac{\bar \sigma v(\tau/4)+1}{v(\tau/4)-\bar \sigma} = \frac{-v(\tau/4)+\sigma}{\sigma v(\tau/4)+1}.$$
\end{proof}

The set of mappings
$$\tilde H = \{x, A(x), \bar A(x), -1/x\}$$
forms a group under composition.  We also have the formula
$$(\sigma x+1)^2(\sigma y+1)^2 f(\bar A(x),\bar A(y)) = 2^3 \sigma^2 f(y,x).$$

\begin{prop} The function $v(\tau)$ satisfies the following:
\begin{equation}
v^2\left(\frac{-1}{8\tau}\right) = \frac{v^2(\tau)-\sigma^2}{\sigma^2 v^2(\tau)-1}, \ \ \sigma = -1+ \sqrt{2};\\
\label{eqn:3.4}
\end{equation}
\label{prop:4}
\end{prop}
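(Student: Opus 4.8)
The plan is to reduce everything to Proposition 3 and to the squared form of the identity in Proposition 1(b). First I would specialize Proposition 3 by replacing $\tau$ with $8\tau$. Since $\bar A(v(\tau/4))$ then becomes $\bar A(v(2\tau))$, this gives immediately
$$v\left(\frac{-1}{8\tau}\right) = \bar A(v(2\tau)) = \frac{-v(2\tau)+\sigma}{\sigma v(2\tau)+1}.$$
Squaring both sides yields
$$v^2\left(\frac{-1}{8\tau}\right) = \frac{(v(2\tau)-\sigma)^2}{(\sigma v(2\tau)+1)^2},$$
so the entire problem reduces to showing that the right-hand side equals $\dfrac{v^2(\tau)-\sigma^2}{\sigma^2 v^2(\tau)-1}$.

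Next I would bring in the relation from the Remark following Proposition 1, namely $v^2(\tau) = v(2\tau)\,\dfrac{1-v(2\tau)}{1+v(2\tau)}$. Writing $y = v(2\tau)$ and $X = v^2(\tau) = \dfrac{y-y^2}{1+y}$, I would substitute this expression for $X$ into the target quantity $\dfrac{X-\sigma^2}{\sigma^2 X-1}$ and cancel the common factor $1+y$ from numerator and denominator. A short computation then gives
$$\frac{X-\sigma^2}{\sigma^2 X-1} = \frac{-y^2+(1-\sigma^2)y-\sigma^2}{-\sigma^2 y^2+(\sigma^2-1)y-1}.$$

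The one genuinely substantive input is the minimal polynomial of $\sigma$: since $\sigma = \sqrt{2}-1$ satisfies $\sigma^2+2\sigma-1=0$, we have $1-\sigma^2 = 2\sigma$ and $\sigma^2-1 = -2\sigma$. Using these relations, the numerator collapses to $-(y^2-2\sigma y+\sigma^2) = -(y-\sigma)^2$ and the denominator to $-(\sigma^2 y^2+2\sigma y+1) = -(\sigma y+1)^2$, whence $\dfrac{X-\sigma^2}{\sigma^2 X-1} = \dfrac{(y-\sigma)^2}{(\sigma y+1)^2}$, which is exactly the squared transformation formula computed above. I do not expect a real obstacle here: the only points to watch are the correct application of the quadratic relation for $\sigma$ and the verification that $1+y$ is genuinely a common factor so that the cancellation is legitimate; everything else is routine bookkeeping.
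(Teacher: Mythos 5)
Your argument is correct and follows essentially the same route as the paper: both apply Proposition \ref{prop:3} with $\tau$ replaced by $8\tau$, square, invoke the relation $v^2(\tau)=v(2\tau)\frac{1-v(2\tau)}{1+v(2\tau)}$ from Proposition \ref{prop:1}(b), and exploit $\sigma^2+2\sigma-1=0$ to collapse numerator and denominator into perfect squares. The only difference is cosmetic --- the paper massages $\frac{(y-\sigma)^2}{(\sigma y+1)^2}$ forward into $\frac{x^2-\sigma^2}{\sigma^2x^2-1}$, while you run the same chain of equalities in reverse.
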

\begin{proof}
Replacing $\tau$ by $8\tau$ in Proposition \ref{prop:3} and squaring gives us
\begin{align*}
v^2\left(\frac{-1}{8\tau}\right)&=\frac{(-v(2\tau)+\sigma)^2}{(\sigma v(2\tau)+1)^2}\\
&=\frac{(-y+\sigma)^2}{(\sigma y+1)^2}\\
&=\frac{y^2-2\sigma y+\sigma^2}{\sigma^2y^2+2\sigma y+1},
\end{align*}
where $y=v(2\tau)$.  Then, replace $2\sigma$ by $1-\sigma^2$ to obtain
\begin{align*}
v^2\left(\frac{-1}{8\tau}\right)&=\frac{y^2-y+\sigma^2y+\sigma^2}{\sigma^2y^2+y-\sigma^2y+1}\\
&=\frac{\sigma^2(y+1)-(y-y^2)}{(y+1)-\sigma^2(y-y^2)}.
\end{align*}
Now finally, replace $(y-y^2)$ by $x^2(y+1)$, using Proposition \ref{prop:1}(b), to get the result:
\begin{align*}
v^2\left(\frac{-1}{8\tau}\right)&=\frac{\sigma^2(y+1)-x^2(y+1)}{(y+1)-\sigma^2x^2(y+1)}\\
&=\frac{(\sigma^2-x^2)(y+1)}{(1-\sigma^2x^2)(y+1)}\\
&=\frac{x^2-\sigma^2}{\sigma^2x^2-1},
\end{align*}
where $x=v(\tau)$.  This completes the proof.
\end{proof}

For later use we denote the linear fractional map which occurs in (\ref{eqn:3.4}) by $t(x)$:
\begin{equation}
t(x) = \frac{x-\sigma^2}{\sigma^2 x-1}.
\label{eqn:3.5}
\end{equation}
A straightforward calculation shows that
\begin{equation}
(\sigma^2 x-1)^2 (\sigma^2 y - 1)^2 g(t(x),t(y)) = 2^5 \sigma^4 g(y,x).
\label{eqn:3.6}
\end{equation}

\section{The relation between $v(\tau)$ and $\mathfrak{p}(\tau)$.}

In this section and the next we shall prove several identities between $v(\tau)$ and the functions $\mathfrak{p}(\tau)$ and $\mathfrak{b}(\tau)$ defined as follows.  Let $\mathfrak{f}, \mathfrak{f}_1, \mathfrak{f}_2$ denote the Weber-Schl\"afli functions (see \cite[p. 233]{co}, \cite[p. 148]{sch}).  Then the functions $\mathfrak{p}(\tau)$ and $\mathfrak{b}(\tau)$ are given by

\begin{align}
\label{eqn:4.1} \mathfrak{p}(\tau) & = \frac{\mathfrak{f}_2(\tau/2)^2}{\mathfrak{f}(\tau/2)^2} = 2q^{1/16} \prod_{n=1}^\infty{\left(\frac{1+q^{n/2}}{1+q^{n/2-1/4}}\right)^2},\\
\label{eqn:4.2} \mathfrak{b}(\tau) & = 2\frac{\mathfrak{f}_1(\tau/2)^2}{\mathfrak{f}(\tau/2)^2} = 2\prod_{n=1}^\infty{\left(\frac{1-q^{n/2-1/4}}{1+q^{n/2-1/4}}\right)^2}.
\end{align}
\medskip
Note that $\mathfrak{b}(\tau)$ occurs in \cite[\S 10, (10.3)]{lm}.

\begin{prop}
\begin{equation}
\frac{2}{\mathfrak{p}(8\tau)} = \frac{1-v^2(\tau)}{v(\tau)} = \frac{1}{v(\tau)} - v(\tau).
\label{eqn:4.3}
\end{equation}
\label{prop:5}
\end{prop}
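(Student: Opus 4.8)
The plan is to convert both sides of (\ref{eqn:4.3}) into infinite products in $q = e^{2\pi i\tau}$ and to match them using the theta-function identities of Section 2. First I would record the product expansion of $v(\tau)$: since the Kronecker symbol $\left(\frac{8}{n}\right) = \left(\frac{2}{n}\right)$ equals $+1$ for $n \equiv \pm 1 \pmod 8$, $-1$ for $n \equiv \pm 3 \pmod 8$, and $0$ for even $n$, one has
$$v(\tau) = q^{1/2}\frac{A}{B}, \quad A = (q;q^8)_\infty(q^7;q^8)_\infty, \quad B = (q^3;q^8)_\infty(q^5;q^8)_\infty.$$
Hence $\frac{1}{v(\tau)} - v(\tau) = q^{-1/2}\frac{B^2 - qA^2}{AB}$, and the telescoping $AB = (q;q^2)_\infty$ is immediate. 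On the other side, replacing $q$ by $q^8$ in (\ref{eqn:4.1}) gives $\mathfrak{p}(8\tau) = 2q^{1/2}\frac{(-q^4;q^4)_\infty^2}{(-q^2;q^4)_\infty^2}$, so that $\frac{2}{\mathfrak{p}(8\tau)} = q^{-1/2}\frac{(-q^2;q^4)_\infty^2}{(-q^4;q^4)_\infty^2}$. Thus the proposition reduces to the purely product-theoretic identity $\frac{B^2 - qA^2}{AB} = \frac{(-q^2;q^4)_\infty^2}{(-q^4;q^4)_\infty^2}$.

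The crux is to evaluate the numerator $B^2 - qA^2$ in closed form. Using Jacobi's triple product (\ref{eqn:2.5}) I would identify $A = f(-q,-q^7)/(q^8;q^8)_\infty$ and $B = f(-q^3,-q^5)/(q^8;q^8)_\infty$, so that $B^2 - qA^2 = \big(f^2(-q^3,-q^5) - q\,f^2(-q,-q^7)\big)/(q^8;q^8)_\infty^2$. The key step is to produce this combination from the Ramanujan identities (\ref{eqn:2.14}) and (\ref{eqn:2.15}). Substituting $q \mapsto -q$ in each turns their left-hand sides into $\varphi(q) \pm \varphi(q^2)$ and their right-hand sides into expressions in $f(-q^3,-q^5)$, $f(-q,-q^7)$ and $\psi(-q)$; subtracting the two resulting identities makes the $\varphi(q)$ terms cancel and yields
$$f^2(-q^3,-q^5) - q\,f^2(-q,-q^7) = \varphi(q^2)\,\psi(-q).$$
I expect this $q \mapsto -q$ manipulation, together with the bookkeeping of signs, to be the main obstacle, since one must be sure the substitution is legitimate as a formal $q$-series identity and must track the argument changes in $f(a,b)$ and in $\psi$ correctly.

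Finally I would finish by a routine product computation. Writing $\varphi(q^2) = (-q^2;q^4)_\infty^2(q^4;q^4)_\infty$ from (\ref{eqn:2.6}) and $\psi(-q) = (q^2;q^2)_\infty/(-q;q^2)_\infty$ from (\ref{eqn:2.7}), substituting the evaluation of $B^2 - qA^2$ and dividing by $AB = (q;q^2)_\infty$, the claimed identity collapses under the elementary relations $(q^4;q^4)_\infty(-q^4;q^4)_\infty = (q^8;q^8)_\infty$, $(q;q^2)_\infty(-q;q^2)_\infty = (q^2;q^4)_\infty$, and $(q^2;q^2)_\infty = (q^2;q^4)_\infty(q^4;q^4)_\infty$. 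After cancelling the common factors $(-q^2;q^4)_\infty^2$ and $(q^8;q^8)_\infty$, both sides reduce to $(q^2;q^4)_\infty(q^8;q^8)_\infty$, completing the proof. Everything is an equality between products converging absolutely for $|q|<1$, so no further appeal to the identity theorem is required.
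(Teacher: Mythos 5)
Your proposal is correct and follows essentially the same route as the paper's proof: the same product expansion of $v(\tau)$, the same reduction of the numerator to $f^2(-q^3,-q^5)-q\,f^2(-q,-q^7)$ via Jacobi's triple product, the same key evaluation $\varphi(q^2)\psi(-q)$ obtained from (\ref{eqn:2.14})--(\ref{eqn:2.15}) under $q\mapsto -q$, and the same final product simplification against $\mathfrak{p}(8\tau)$. No substantive differences to report.
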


\begin{proof}
The function $v(\tau)$ satisfies
\begin{align*}
v(\tau)&=q^{1/2} \prod_{n\ge1}{(1-q^n)^{\left(\frac{8}{n}\right)}}=q^{1/2}\prod_{n\ge1}{\frac{(1-q^{8n-1})(1-q^{8n-7})}{(1-q^{8n-3})(1-q^{8n-5})}}\\
&=q^{1/2} \frac{(q;q^8)_\infty(q^7;q^8)_\infty}{(q^3;q^8)_\infty(q^5;q^8)_\infty}.
\end{align*}
This gives that
\begin{align*}
\frac{1}{v(\tau)}-v(\tau)&=q^{-1/2}\,\frac{(q^3;q^8)_\infty(q^5;q^8)_\infty}{(q;q^8)_\infty(q^7;q^8)_\infty}-q^{1/2}\,\frac{(q;q^8)_\infty(q^7;q^8)_\infty}{(q^3;q^8)_\infty(q^5;q^8)_\infty}\\
&=\frac{(q^3;q^8)_\infty^2(q^5;q^8)_\infty^2-q\,(q;q^8)_\infty^2(q^7;q^8)_\infty^2}{q^{1/2}\,(q;q^8)_\infty(q^3;q^8)_\infty(q^5;q^8)_\infty(q^7;q^8)_\infty}\\
&=\frac{(q^3;q^8)_\infty^2(q^5;q^8)_\infty^2-q\,(q;q^8)_\infty^2(q^7;q^8)_\infty^2}{q^{1/2}\,(q;q^2)_\infty}.
\end{align*}
Multiplying the numerator and the denominator by $(q^8;q^8)_\infty^2$ and applying Jacobi's triple product identity in the form
$$f(a,b)=(-a;ab)_\infty(-b;ab)_\infty(ab;ab)_\infty,$$
with $(a,b)=(-q^3,-q^5)$ for the first term in the numerator and $(a,b)=(-q,-q^7)$ for the second, we obtain
\begin{align*}
\frac{1}{v(\tau)}-v(\tau)&=\frac{(q^3;q^8)_\infty^2(q^5;q^8)_\infty^2(q^8;q^8)_\infty^2-q\,(q;q^8)_\infty^2(q^7;q^8)_\infty^2(q^8;q^8)_\infty^2}{q^{1/2}\,(q;q^2)_\infty(q^8;q^8)_\infty^2}\\
&=\frac{f^2(-q^3,-q^5)-q\,f^2(-q,-q^7)}{q^{1/2}\,(q;q^2)_\infty(q^8;q^8)_\infty^2}.
\end{align*}
Now replace $q$ by $-q$ in (\ref{eqn:2.14}), (\ref{eqn:2.15}) and apply it to the numerator to get
\begin{align*}
\frac{1}{v(\tau)}-v(\tau)&=\frac{\psi(-q)\big[\varphi(q)+\varphi(q^2)\big]-\psi(-q)\big[\varphi(q)-\varphi(q^2)\big]}{2\,q^{1/2}\,(q;q^2)_\infty(q^8;q^8)_\infty^2}\\
&=\frac{\psi(-q)\times\varphi(q^2)}{q^{1/2}\,(q;q^2)_\infty(q^8;q^8)_\infty^2}\\
&=q^{-1/2}\,\frac{(q^2;q^2)_\infty}{(-q;q^2)_\infty}\times\frac{(-q^2;q^4)_\infty^2(q^4;q^4)_\infty}{(q;q^2)_\infty(q^8;q^8)_\infty^2}\\&=q^{-1/2}\,\frac{(-q^2;q^4)_\infty^2(q^2;q^2)_\infty(q^4;q^4)_\infty}{(q^2;q^4)_\infty(q^8;q^8)_\infty^2}\\
&=q^{-1/2}\,\frac{(-q^2;q^4)_\infty^2(q^4;q^4)_\infty^2}{(q^8;q^8)_\infty^2}\\&=q^{-1/2}\,(-q^2;q^4)_\infty^2(q^4;q^8)_\infty^2\\
&=q^{-1/2}\,\frac{(-q^2;q^4)_\infty^2}{(-q^4;q^4)_\infty^2}.
\end{align*}
Since
$$\mathfrak{p}(8\tau)=2\,q^{1/2}\prod_{n\ge1}{\bigg(\frac{1+q^{4n}}{1+q^{4n-2}}\bigg)^2}=2\,q^{1/2}\,\frac{(-q^4;q^4)_\infty^2}{(-q^2;q^4)_\infty^2},$$
we get the result by substituting into the last equality.
\end{proof}

\begin{prop}
The function $\mathfrak{p}(\tau)$ satisfies the identity
\begin{equation*}
\mathfrak{p}^2(\tau) \mathfrak{p}^2(2\tau) + \mathfrak{p}^2(\tau) -2\mathfrak{p}(2\tau) = 0.
\end{equation*}
\label{prop:6}
\end{prop}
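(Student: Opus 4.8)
The plan is to reduce this entirely to Proposition~\ref{prop:5} together with the modular relation of Proposition~\ref{prop:1}(b), so that no fresh theta-function computation is needed. The identity of Proposition~\ref{prop:5} reads $2/\mathfrak{p}(8\tau) = 1/v(\tau) - v(\tau)$. Applying it with $\tau$ replaced by $\tau/8$ and then by $\tau/4$ (both of which remain in the upper half-plane) gives
$$\frac{2}{\mathfrak{p}(\tau)} = \frac{1}{v(\tau/8)} - v(\tau/8), \qquad \frac{2}{\mathfrak{p}(2\tau)} = \frac{1}{v(\tau/4)} - v(\tau/4),$$
and hence, writing $a = v(\tau/8)$ and $b = v(\tau/4)$, we may solve to obtain the rational expressions $\mathfrak{p}(\tau) = 2a/(1-a^2)$ and $\mathfrak{p}(2\tau) = 2b/(1-b^2)$.

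First I would record the single relation linking $a$ and $b$. Since $\tau/4 = 2\cdot(\tau/8)$, Proposition~\ref{prop:1}(b) applied at the argument $\tau/8$ yields $a^2 b + a^2 + b^2 = b$, which I rewrite as $a^2(1+b) = b(1-b)$, i.e. $a^2 = b(1-b)/(1+b)$. A convenient consequence, obtained by combining over the common denominator, is $1 - a^2 = (1+b^2)/(1+b)$; these two formulas let me eliminate $a$ entirely in favor of $b$.

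Next I would substitute $\mathfrak{p}(\tau) = 2a/(1-a^2)$ and $\mathfrak{p}(2\tau) = 2b/(1-b^2)$ into the claimed identity and clear denominators. Multiplying $\mathfrak{p}^2(\tau)\mathfrak{p}^2(2\tau) + \mathfrak{p}^2(\tau) - 2\mathfrak{p}(2\tau)$ by $(1-a^2)^2(1-b^2)^2/4$ reduces the statement to the polynomial identity
$$4a^2 b^2 + a^2(1-b^2)^2 - b(1-a^2)^2(1-b^2) = 0.$$
Substituting $a^2 = b(1-b)/(1+b)$ and $1-a^2 = (1+b^2)/(1+b)$ and factoring out the common factor $b(1-b)/(1+b)$ leaves the bracketed quantity $4b^2 + (1-b^2)^2 - (1+b^2)^2$, which collapses to $4b^2 - 4b^2 = 0$. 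Hence the whole expression vanishes identically in $b$, and the proposition follows.

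I do not expect a serious obstacle here: the only real care needed is (i) to verify that the substitutions $\tau\mapsto\tau/8$ and $\tau\mapsto\tau/4$ in Proposition~\ref{prop:5} are legitimate and produce exactly $\mathfrak{p}(\tau)$ and $\mathfrak{p}(2\tau)$, and (ii) to keep the denominators straight when clearing them, since a stray sign or factor would obscure the clean cancellation. An alternative, slightly less transparent route would be to work with the polynomial $g(x,y)$ of Proposition~\ref{prop:2} in the variables $v^2(\tau/8), v^2(\tau/4)$ and reduce the target relation modulo $g$; but the direct elimination of $a$ above is shorter and makes the final cancellation fully visible.
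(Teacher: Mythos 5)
Your proposal is correct and follows essentially the same route as the paper: both proofs combine Proposition \ref{prop:5} (to write $\mathfrak{p}$ as $2v/(1-v^2)$) with the relation $a^2 = b(1-b)/(1+b)$ from Proposition \ref{prop:1}(b) and then eliminate $a$. The only difference is cosmetic — the paper performs the substitution $\tau\mapsto\tau/8$ at the end and massages the fractions into $\mathfrak{p}^2(8\tau)=2\mathfrak{p}(16\tau)/(\mathfrak{p}^2(16\tau)+1)$ directly, whereas you substitute at the outset and verify the cleared-denominator polynomial identity.
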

\begin{proof}
We use the relation between $x=v(\tau)$ and $y=v(2\tau)$ from Proposition \ref{prop:1}(b): $x^2=\frac{y(1-y)}{(1+y)}$.
This gives
\begin{align*}
\left(\frac{2x}{1-x^2}\right)^2&=\frac{4x^2}{(1-x^2)^2}=\frac{4\cdot\frac{y(1-y)}{(1+y)}}{\big(1-\frac{y(1-y)}{(1+y)}\big)^2}\\
&=\frac{4y(1-y)(1+y)}{\big((1+y)-y(1-y)\big)^2}\\
&=\frac{4y(1-y^2)}{(1+y^2)^2}\\
&=\frac{4y(1-y^2)}{4y^2+(1-y^2)^2}.
\end{align*}
Now divide both the numerator and the denominator by $(1-y^2)^2$ to obtain
\begin{equation}
\left(\frac{2x}{1-x^2}\right)^2=\frac{\frac{4y}{1-y^2}}{\frac{4y^2}{(1-y^2)^2}+1}=\frac{2\cdot\big(\frac{2y}{1-y^2}\big)}{\big(\frac{2y}{1-y^2}\big)^2+1}.
\label{eqn:4.4}
\end{equation}
From Proposition \ref{prop:5}, we know that
$$\mathfrak{p}(8\tau)=\frac{2v(\tau)}{1-v^2(\tau)}=\frac{2x}{1-x^2},$$
and
$$\mathfrak{p}(16\tau)=\frac{2v(2\tau)}{1-v^2(2\tau)}=\frac{2y}{1-y^2}.$$
Thus, (\ref{eqn:4.4}) becomes
$$\mathfrak{p}^2(8\tau)=\frac{2\mathfrak{p}(16\tau)}{\mathfrak{p}^2(16\tau)+1}.$$
Replacing $\tau$ by $\tau/8$ and rearranging gives us the result.
\end{proof}

\begin{prop} a) The functions $x=\mathfrak{b}(\tau)$ and $y=\mathfrak{b}(2\tau)$ satisfy the relation
$$x^2y^2+4y^2-16x = 0.$$
b) The following identity holds between $x = \mathfrak{b}(\tau)$ and $z = \mathfrak{b}(4\tau)$:
$$(\mathfrak{b}(\tau)+2)^4 \mathfrak{b}^4(4\tau) = 2^8(\mathfrak{b}^3(\tau)+4\mathfrak{b}(\tau)).$$
\label{prop:7}
\end{prop}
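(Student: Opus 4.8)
The plan is to reduce part (a) to the two classical $\varphi$-identities already recorded in Section~2, and then to deduce part (b) purely algebraically by iterating (a). The first and essential step is to rewrite $\mathfrak{b}(\tau)$ as a ratio of theta constants. Setting $Q = q^{1/4}$, the exponent $n/2-1/4$ becomes $(2n-1)/4$, so the product in (\ref{eqn:4.2}) is $\mathfrak{b}(\tau) = 2(Q;Q^2)_\infty^2/(-Q;Q^2)_\infty^2$. Comparing with the product (\ref{eqn:2.6}) for $\varphi$, which gives $\varphi(-Q) = (Q;Q^2)_\infty^2(Q^2;Q^2)_\infty$ and $\varphi(Q) = (-Q;Q^2)_\infty^2(Q^2;Q^2)_\infty$, the factor $(Q^2;Q^2)_\infty$ cancels and I obtain the clean formula
$$\mathfrak{b}(\tau) = 2\,\frac{\varphi(-Q)}{\varphi(Q)}, \qquad Q = q^{1/4}.$$
Replacing $\tau$ by $2\tau$ sends $Q \mapsto Q^2$, so $\mathfrak{b}(2\tau) = 2\varphi(-Q^2)/\varphi(Q^2)$. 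Everything else in (a) is then forced.

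Write $a = \varphi(Q)$, $b = \varphi(-Q)$, $c = \varphi(Q^2)$, so that $x = \mathfrak{b}(\tau) = 2b/a$. Since the target relation involves $\mathfrak{b}(2\tau)$ only through $y^2$, I need not extract a square root: by (\ref{eqn:2.16}) in the form $\varphi^2(-Q^2) = \varphi(Q)\varphi(-Q) = ab$, I get $y^2 = \mathfrak{b}^2(2\tau) = 4ab/c^2$. Substituting $x = 2b/a$ and $y^2 = 4ab/c^2$ into $x^2y^2 + 4y^2 - 16x$ and clearing, every term carries the common factor $16b/(ac^2)$, leaving
$$x^2y^2 + 4y^2 - 16x = \frac{16b}{ac^2}\bigl(a^2 + b^2 - 2c^2\bigr).$$
The parenthesized quantity is $\varphi^2(Q)+\varphi^2(-Q)-2\varphi^2(Q^2)$, which vanishes identically by (\ref{eqn:2.11}). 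This proves (a), and since each step is an exact equality of analytic functions, no appeal to the identity theorem is needed.

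For part (b) I would not return to $q$-products, but eliminate $y = \mathfrak{b}(2\tau)$ between two instances of (a). Writing $x=\mathfrak{b}(\tau)$, $y=\mathfrak{b}(2\tau)$, $z=\mathfrak{b}(4\tau)$, part (a) at $\tau$ gives $y^2 = 16x/(x^2+4)$, whence $y^2+4 = 4(x+2)^2/(x^2+4)$; part (a) at $2\tau$ gives $z^2(y^2+4) = 16y$, so solving linearly for $y$ yields $y = z^2(x+2)^2/\bigl(4(x^2+4)\bigr)$. Squaring this and equating with $y^2 = 16x/(x^2+4)$ gives $z^4(x+2)^4 = 2^8\,x(x^2+4)$, which is exactly $(\mathfrak{b}(\tau)+2)^4\mathfrak{b}^4(4\tau) = 2^8(\mathfrak{b}^3(\tau)+4\mathfrak{b}(\tau))$, as required.

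The only real obstacle is spotting the reduction $\mathfrak{b}(\tau) = 2\varphi(-Q)/\varphi(Q)$ in the first step; once $\mathfrak{b}$ is expressed this way, part (a) collapses onto the single identity (\ref{eqn:2.11}) (with (\ref{eqn:2.16}) handling $y^2$), and part (b) becomes routine elimination of the intermediate value $\mathfrak{b}(2\tau)$. No new theta or Ramanujan identities beyond those already listed are required.
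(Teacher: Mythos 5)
Your proof is correct. Part (a) is essentially the paper's argument in a different notational dress: the paper substitutes $4\tau$ for $\tau$ to get $\mathfrak{b}(4\tau)=2\varphi(-q)/\varphi(q)$ and then combines (\ref{eqn:2.11}) with (\ref{eqn:2.16}), exactly as you do with $Q=q^{1/4}$; your factored form $\tfrac{16b}{ac^2}(a^2+b^2-2c^2)$ is just a cleaner packaging of the same two identities. Part (b) is where you genuinely diverge. The paper proves (b) by a fresh theta-function computation: it starts from (\ref{eqn:2.17}), $\varphi(q)+\varphi(-q)=2\varphi(q^4)$, takes fourth powers, and massages the result with (\ref{eqn:2.16}) and (\ref{eqn:2.11}) into the stated identity. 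You instead obtain (b) purely algebraically by writing down part (a) at $\tau$ and at $2\tau$ and eliminating $\mathfrak{b}(2\tau)$; the elimination is legitimate because the second instance is linear in $y=\mathfrak{b}(2\tau)$ once $y^2+4=4(x+2)^2/(x^2+4)$ is substituted, so there is no sign ambiguity. Your route buys economy --- it needs no identity beyond those already used for (a), and in particular dispenses with (\ref{eqn:2.17}) --- while the paper's route keeps (b) independent of (a) and exhibits the identity as a direct consequence of the classical $\varphi$-relations. Both are complete proofs.
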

\begin{proof}
a) On putting $4\tau$ for $\tau$ in $x$, we have
$$\mathfrak{b}(4\tau)=2\prod_{n=1}^\infty{\left(\frac{1-q^{2n-1}}{1+q^{2n-1}}\right)^2}=2\ \frac{(q;q^2)_\infty^2}{(-q;q^2)_\infty^2}=2\ \frac{\varphi(-q)}{\varphi(q)}\cdot$$
From (\ref{eqn:2.11}), we have
$$\varphi^2(-q)+\varphi^2(q)=2\varphi^2(q^2).$$
Multiplying both sides by $\varphi^2(-q^2)=\varphi(q)\varphi(-q)$ from (\ref{eqn:2.16}), we obtain
$$\varphi^2(-q)\varphi^2(-q^2)+\varphi^2(q)\varphi^2(-q^2)=2\varphi(q)\varphi(-q)\varphi^2(q^2).$$
Now dividing both sides by $\varphi^2(q)\varphi^2(q^2)$ gives us
$$\frac{\varphi^2(-q)}{\varphi^2(q)}\cdot\frac{\varphi^2(-q^2)}{\varphi^2(q^2)}+\frac{\varphi^2(-q^2)}{\varphi^2(q^2)}=2\ \frac{\varphi(-q)}{\varphi(q)}.$$
Hence, we see that $x=\mathfrak{b}(4\tau)$ and $y=\mathfrak{b}(8\tau)$ satisfy the relation
$$x^2y^2+4y^2-16x=0.$$
Now replace $\tau$ by $\tau/4$. \medskip

b) From (\ref{eqn:2.17}), upon taking fourth powers, we get
$$\big[\varphi(-q)+\varphi(q)\big]^4=16\,\varphi^4(q^4).$$
Multiplying both sides by $\varphi^4(-q^4)/\big[\varphi^4(q)\varphi^4(q^4)\big]$ gives us
$$\frac{\big[\varphi(-q)+\varphi(q)\big]^4}{\varphi^4(q)} \cdot \frac{\varphi^4(-q^4)}{\varphi^4(q^4)}=16\,\frac{\varphi^4(-q^4)}{\varphi^4(q)}.$$
Then using (\ref{eqn:2.16}) twice for the right side, we obtain
$$\frac{\big[\varphi(-q)+\varphi(q)\big]^4}{\varphi^4(q)}\cdot\frac{\varphi^4(-q^4)}{\varphi^4(q^4)}=16\,\frac{\varphi(-q)\varphi(q)}{\varphi^4(q)}\cdot\varphi^2(q^2).$$
Now use (\ref{eqn:2.11}) for the last factor on the right side to get
$$\frac{\big[\varphi(-q)+\varphi(q)\big]^4}{\varphi^4(q)}\cdot\frac{\varphi^4(-q^4)}{\varphi^4(q^4)}=8\,\frac{\varphi(-q)}{\varphi^3(q)}\cdot\big[\varphi^2(-q)+\varphi^2(q)\big].$$
This implies that
$$\bigg[\frac{\varphi(-q)}{\varphi(q)}+1\bigg]^4\cdot\bigg[\frac{\varphi(-q^4)}{\varphi(q^4)}\bigg]^4=8\cdot\frac{\varphi(-q)}{\varphi(q)}\cdot\bigg[\bigg(\frac{\varphi(-q)}{\varphi(q)}\bigg)^2+1\bigg].$$
The result follows on multiplying through by $2^8$ and substituting $\displaystyle\mathfrak{b}(4\tau)=2 \frac{\varphi(-q)}{\varphi(q)}$ and $\displaystyle\mathfrak{b}(16\tau)=2 \frac{\varphi(-q^4)}{\varphi(q^4)}$ into the above equation, and then replacing $\tau$ by $\tau/4$.
\end{proof}

\section{The relation between $v(\tau)$ and $\mathfrak{b}(\tau)$.}

We begin this section by proving the following identity.

\begin{prop}
\begin{equation}
\frac{(v^2(\tau)+1)^2}{v^4(\tau)-6v^2(\tau)+1}=\frac{4}{\mathfrak{b}^2(4\tau)}.
\label{eqn:5.1}
\end{equation}
\label{prop:8}
\end{prop}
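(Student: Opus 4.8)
The plan is to express both sides as quotients of the theta constants $\varphi(q),\varphi(-q),\varphi(q^2),\psi(q^4)$ and then match them using only the elementary identities \eqref{eqn:2.11} and \eqref{eqn:2.18}. The right-hand side is immediate: the proof of Proposition \ref{prop:7}(a) already records $\mathfrak{b}(4\tau) = 2\varphi(-q)/\varphi(q)$, whence
$$\frac{4}{\mathfrak{b}^2(4\tau)} = \frac{\varphi^2(q)}{\varphi^2(-q)}.$$
For the left-hand side the key algebraic observation is that the numerator and denominator are both of the form $v^2\cdot(\text{something in } 1/v\pm v)$. Indeed $(v^2+1)^2 = v^2\,(1/v+v)^2$ and $v^4-6v^2+1 = v^2\big[(1/v-v)^2-4\big]$, so that
$$\frac{(v^2(\tau)+1)^2}{v^4(\tau)-6v^2(\tau)+1} = \frac{(1/v+v)^2}{(1/v-v)^2-4}.$$
This is the right thing to aim for because $1/v-v$ was already evaluated in Proposition \ref{prop:5}.

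The first substantive step is therefore to carry out the companion computation of $1/v+v$, following the route of Proposition \ref{prop:5} verbatim but keeping the plus sign: combine the two $q$-product expressions over the common denominator $q^{1/2}(q;q^2)_\infty(q^8;q^8)_\infty^2$, apply Jacobi's triple product with $(a,b)=(-q^3,-q^5)$ and $(-q,-q^7)$, and substitute the $q\to -q$ forms of \eqref{eqn:2.14} and \eqref{eqn:2.15}. With the plus sign the $\varphi(q^2)$-terms now reinforce rather than cancel, so the numerator collapses to $\psi(-q)\varphi(q)$ instead of $\psi(-q)\varphi(q^2)$. Using the product identity $\psi(-q)\psi(q^4) = (q;q^2)_\infty(q^8;q^8)_\infty^2$ (which follows from the splittings $(q^2;q^2)_\infty=(q^2;q^4)_\infty(q^4;q^4)_\infty$ and $(q^2;q^4)_\infty=(q;q^2)_\infty(-q;q^2)_\infty$) to clear the denominator, both evaluations take the clean form
$$\frac{1}{v}+v = \frac{\varphi(q)}{q^{1/2}\psi(q^4)}, \qquad \frac{1}{v}-v = \frac{\varphi(q^2)}{q^{1/2}\psi(q^4)},$$
the second being the Proposition \ref{prop:5} value rewritten.

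With these in hand the remainder is bookkeeping. Squaring gives $(1/v+v)^2 = \varphi^2(q)/(q\psi^2(q^4))$, while
$$(1/v-v)^2 - 4 = \frac{\varphi^2(q^2) - 4q\psi^2(q^4)}{q\psi^2(q^4)}.$$
The final identification is $\varphi^2(q^2) - 4q\psi^2(q^4) = \varphi^2(-q)$: averaging \eqref{eqn:2.11} gives $\varphi^2(q^2)-\varphi^2(-q) = \tfrac12\big(\varphi^2(q)-\varphi^2(-q)\big)$, and \eqref{eqn:2.18} turns the right side into $4q\psi^2(q^4)$. Substituting, the common factor $q\psi^2(q^4)$ cancels and the ratio collapses to $\varphi^2(q)/\varphi^2(-q) = 4/\mathfrak{b}^2(4\tau)$, as desired.

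I expect the main obstacle to be the explicit evaluation of $1/v+v$: one must repeat the somewhat delicate theta/Jacobi manipulation of Proposition \ref{prop:5} with the opposite sign and check that the $q$-product simplifications really do reduce to $\varphi(q)/(q^{1/2}\psi(q^4))$. Everything after that is a two-line application of \eqref{eqn:2.11} and \eqref{eqn:2.18}.
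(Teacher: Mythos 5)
Your proof is correct, but it takes a genuinely different route from the paper's. The paper derives \eqref{eqn:5.1} from the Weber--Schl\"afli identity $\mathfrak{f}_1^8+\mathfrak{f}_2^8=\mathfrak{f}^8$, rewritten as $\mathfrak{b}^4(4\tau)/16=1-\mathfrak{p}^4(4\tau)$, then uses Proposition \ref{prop:5} to express $\mathfrak{p}(4\tau)$ in terms of $v(\tau/2)$ and the modular equation of Proposition \ref{prop:1}(b) to push everything down to $v(\tau)$; this produces a formula for $\mathfrak{b}^4(4\tau)$ from which the stated identity is recovered by extracting a square root, with the sign fixed by evaluating at $i\infty$. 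You instead stay entirely at the level of theta constants: you verify the companion evaluation $\tfrac1v+v=\varphi(q)/(q^{1/2}\psi(q^4))$ alongside the Proposition \ref{prop:5} value $\tfrac1v-v=\varphi(q^2)/(q^{1/2}\psi(q^4))$ (I checked this: the product identity $\psi(-q)\psi(q^4)=(q;q^2)_\infty(q^8;q^8)_\infty^2$ holds, and with the plus sign the numerator does collapse to $\psi(-q)\varphi(q)$ --- though note it is the $\varphi(q)$ terms that reinforce and the $\varphi(q^2)$ terms that cancel, the reverse of what you wrote; the conclusion is unaffected), and then the identity $\varphi^2(q^2)-4q\psi^2(q^4)=\varphi^2(-q)$, which follows from \eqref{eqn:2.11} and \eqref{eqn:2.18} exactly as you say, finishes the computation. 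Your route avoids the Weber functions, the degree-$2$ modular equation, and the fourth-power-plus-sign-determination step entirely, at the cost of redoing the Jacobi triple product manipulation of Proposition \ref{prop:5} with the opposite sign; as a bonus it isolates the pair of clean evaluations of $\tfrac1v\pm v$ as theta quotients, which is of independent interest. The paper's route, by contrast, reuses already-established results and the classical eighth-power relation among the $\mathfrak{f}$'s rather than introducing a new $q$-series computation.
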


\begin{proof}
We prove (\ref{eqn:5.1}) using the identity relating the Weber-Schl\"afli functions from \cite[p. 86, (12)]{we} (see also \cite[p. 234, (12.18)]{co}):
$$\mathfrak{f}_1^8(\tau)+\mathfrak{f}_2^8(\tau)=\mathfrak{f}^8(\tau).$$
From the definitions (\ref{eqn:4.1}) and (\ref{eqn:4.2}) of $\mathfrak{p}(\tau)$ and $\mathfrak{b}(\tau)$, this identity translates to
$$\frac{\mathfrak{b}^4(4\tau)}{16}=1-\mathfrak{p}^4(4\tau).$$
Using the result of Proposition \ref{prop:5}, we write this equation as
\begin{equation*}
\frac{\mathfrak{b}^4(4\tau)}{16}=1-\left(\frac{2\,v(\tau/2)}{1-v^2(\tau/2)}\right)^4=1-\frac{16\,v^4(\tau/2)}{\big(1-v^2(\tau/2)\big)^4}.
\end{equation*}
Setting $x = v(\tau/2)$ and $y = v(\tau)$ and using the relation between $x$ and $y$ from Proposition \ref{prop:1}(b) in the form
$x^2=\frac{y(1-y)}{(1+y)}$ gives that
\begin{align*}
\frac{\mathfrak{b}^4(4\tau)}{16}&=1-\frac{16\,x^4}{(1-x^2)^4} =1-\frac{16\,\left(\frac{y(1-y)}{(1+y)}\right)^2}{\left(1-\frac{y(1-y)}{(1+y)}\right)^4}\\
&=1-\frac{16\,y^2(1-y^2)^2}{(1+y^2)^4}=\frac{(y^2+1)^4-16\,y^2(y^2-1)^2}{(y^2+1)^4}\\
&=\frac{\big((y^2-1)^2+4y^2\big)^2-16\,y^2(y^2-1)^2}{(y^2+1)^4}\\
&=\frac{\big((y^2-1)^2-4y^2\big)^2}{(y^2+1)^4}\\
&=\frac{(y^4-6y^2+1)^2}{(y^2+1)^4},
\end{align*}
which is equivalent to (\ref{eqn:5.1}).  (The plus sign holds on taking the square-root because $\mathfrak{b}(i\infty) = 2, v^2(i\infty) = 0$.)
\end{proof}
\medskip

Proposition \ref{prop:8} will now be used to prove the following formula for the function $j(\tau)$ in terms of $v(\tau)$.

\begin{prop} If $v = v(\tau)$ and $\tau$ lies in the upper half-plane, we have
$$j(\tau) = \frac{(v^{16} + 232v^{14} + 732v^{12}- 1192v^{10} + 710v^8 - 1192v^6 + 732v^4 + 232v^2 + 1)^3}{v^2(v^2 - 1)^2(v^2 + 1)^4(v^4 - 6v^2 + 1)^8}.$$
\label{prop:9}
\end{prop}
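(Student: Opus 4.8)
The plan is to reduce the claim to a computation with the modular lambda function $\lambda(\tau)$ and to feed the information about $v(\tau)$ in through Proposition \ref{prop:8}. I would start from the classical expressions $j = 256\,(1-\lambda+\lambda^2)^3/[\lambda^2(1-\lambda)^2]$ together with $\lambda = \mathfrak{f}_2^8/\mathfrak{f}^8$ and $1-\lambda = \mathfrak{f}_1^8/\mathfrak{f}^8$; note that $\lambda+(1-\lambda)=1$ is exactly the Weber identity $\mathfrak{f}_1^8+\mathfrak{f}_2^8=\mathfrak{f}^8$ already used in Proposition \ref{prop:8}. Comparing with the definitions (\ref{eqn:4.1}), (\ref{eqn:4.2}) of $\mathfrak{p}$ and $\mathfrak{b}$, these translate into $\lambda(\tau)=\mathfrak{p}^4(2\tau)$ and $1-\lambda(\tau)=\mathfrak{b}^4(2\tau)/16$, so $j(\tau)$ is determined once $\lambda(\tau)$ is written as a rational function of $v=v(\tau)$. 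Replacing $\tau$ by $2\tau$ in the second relation and inserting Proposition \ref{prop:8} gives at once $1-\lambda(2\tau)=\mathfrak{b}^4(4\tau)/16=(v^4-6v^2+1)^2/(v^2+1)^4$, and then, via $(v^2+1)^4-(v^4-6v^2+1)^2=16v^2(v^2-1)^2$, the clean value $\lambda(2\tau)=16v^2(v^2-1)^2/(v^2+1)^4$.

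This pins down $\lambda(2\tau)$, but I want $\lambda(\tau)$, so the heart of the argument is to ascend the $2$-isogeny $\tau\mapsto 2\tau$ for the lambda function. I would use the Landen relation $\sqrt{\lambda(2\tau)}=(1-\sqrt{1-\lambda(\tau)})/(1+\sqrt{1-\lambda(\tau)})$, inverted as $\sqrt{1-\lambda(\tau)}=(1-k_1)/(1+k_1)$ with $k_1=\sqrt{\lambda(2\tau)}$. The decisive point is that $\lambda(2\tau)$ is already a perfect square in $\mathbb{Q}(v)$, so that $k_1=4v(1-v^2)/(v^2+1)^2$ and no new irrationality is introduced; this is consistent with Propositions \ref{prop:5} and \ref{prop:1}(b), which together give $\mathfrak{p}^2(4\tau)=4v(1-v^2)/(v^2+1)^2$. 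A short calculation then produces $1-k_1=(v^2+2v-1)^2/(v^2+1)^2$ and $1+k_1=(v^2-2v-1)^2/(v^2+1)^2$, whence $1-\lambda(\tau)=\big((v^2+2v-1)/(v^2-2v-1)\big)^4$.

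Writing $p=v^2+2v-1$, $m=v^2-2v-1$ and $R=p/m$, so that $\lambda(\tau)=1-R^4$, I would finish by substituting into $j=256(1-\lambda+\lambda^2)^3/[\lambda^2(1-\lambda)^2]$. The simplification is governed by the symmetric identities $pm=v^4-6v^2+1$, $p^2+m^2=2(v^2+1)^2$ and $m^2-p^2=-8v(v^2-1)$, which give $1-R^4=-16v(v^2-1)(v^2+1)^2/m^4$; the factor $256$ cancels against $(1-\lambda)^2$, the denominator collapses to $v^2(v^2-1)^2(v^2+1)^4(pm)^8=v^2(v^2-1)^2(v^2+1)^4(v^4-6v^2+1)^8$, and the numerator becomes $(m^8-m^4p^4+p^8)^3$, which expands to the stated degree-$16$ polynomial cubed. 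I expect the main obstacle to be the branch bookkeeping in the ascent: the $2$-isogeny is two-to-one, so the correct signs of $\sqrt{\lambda(2\tau)}$ and $\sqrt{1-\lambda(\tau)}$ must be fixed. I would settle this by choosing both square roots according to their leading behavior as $q\to 0$ (using $v=q^{1/2}+\cdots$, $\lambda(\tau)\sim 16q^{1/2}$, $\lambda(2\tau)\sim 16q$) and then invoking the identity theorem, exactly as in the proofs of Propositions \ref{prop:1} and \ref{prop:8}; the remaining algebra is routine and can be confirmed with Maple.
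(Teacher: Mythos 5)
Your proposal is correct, and it reaches the stated formula by a genuinely different route from the paper. Both arguments use Proposition \ref{prop:8} as the essential modular input, but they differ in how they pass from information at a rescaled argument back to $\tau$ itself. The paper starts from the identity $j(\tau)=G(\alpha^4(\tau))$ for the eta quotient $\alpha(\tau)$ of (\ref{eqn:5.2}) (imported from \cite{lm}), converts $\alpha^4$ to $\mathfrak{b}^4$ via (\ref{eqn:5.4}), applies Proposition \ref{prop:8} to get $j(4\tau)$ as a rational function of $v(\tau)$ (equation (\ref{eqn:5.5})), and then undoes the factor of $4$ by substituting $v(\tau)=\bar A(v(-1/4\tau))$ from Proposition \ref{prop:3} and using $j(-1/\tau)=j(\tau)$. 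You instead work entirely on the $\lambda$-side: the Weber identity $\mathfrak{f}_1^8+\mathfrak{f}_2^8=\mathfrak{f}^8$ identifies $1-\lambda(2\tau)=\mathfrak{b}^4(4\tau)/16$, Proposition \ref{prop:8} then gives $\lambda(2\tau)=16v^2(v^2-1)^2/(v^2+1)^4$ as a perfect square, and the descending Landen transformation (inverted) lifts this to $1-\lambda(\tau)=\bigl((v^2+2v-1)/(v^2-2v-1)\bigr)^4$, after which the classical relation $j=256(1-\lambda+\lambda^2)^3/\bigl(\lambda^2(1-\lambda)^2\bigr)$ produces the stated formula; I checked the symmetric-function bookkeeping ($pm=v^4-6v^2+1$, $p^2+m^2=2(v^2+1)^2$, $m^2-p^2=-8v(v^2-1)$) and the resulting coefficients $232, 732, -1192, 710$ all come out right. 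What your route buys is self-containedness: it avoids Proposition \ref{prop:3} and the external formula $j=G(\alpha^4)$ from \cite{lm}, relying only on standard $\lambda$-theory, and it makes visible why the denominator factors as it does. What the paper's route buys is the intermediate formula (\ref{eqn:5.5}) for $j(4\tau)$ and the mechanism that later yields $j_{22}$ and the isogeny remark after (\ref{eqn:5.8}). Your branch-fixing by leading $q$-behavior plus the identity theorem is exactly the right way to handle the two-to-one ambiguity in the Landen step, so I see no gap.
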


\begin{proof}
Let
$$G(x) = \frac{(x^2-16x+16)^3}{x(x-16)}.$$
Then from \cite[p. 1967, (2.8)]{lm} the function
\begin{equation}
\alpha(\tau) = \zeta_8^{-1} \frac{\eta(\tau/4)^2}{\eta(\tau)^2}, \ \ \zeta_8 = e^{2\pi i/8},
\label{eqn:5.2}
\end{equation}
satisfies the following relation:
\begin{equation}
j(\tau) = \frac{(\alpha^8-16\alpha^4+16)^3}{\alpha^4(\alpha^4-16)} = G(\alpha^4(\tau)).
\label{eqn:5.3}
\end{equation}
Moreover, $\alpha(\tau)$ and $\mathfrak{b}(\tau)$ satisfy
$$16 \alpha^4(\tau) + 16 \mathfrak{b}^4(\tau) = \alpha^4(\tau) \mathfrak{b}^4(\tau),$$
so that
\begin{equation}
\alpha^4(\tau) = \frac{16\mathfrak{b}^4(\tau)}{\mathfrak{b}^4(\tau)-16}.
\label{eqn:5.4}
\end{equation}
Setting $b=\mathfrak{b}(\tau)$, we substitute for $\alpha = \alpha(\tau)$ in (\ref{eqn:5.3}) and find that
$$j(\tau) = G\left(\frac{16b^4}{b^4-16}\right) = \frac{(b^8+224b^4+256)^3}{b^4(b^4-16)^4}, \ \ b = \mathfrak{b}(\tau).$$
Now replace $\tau$ by $4\tau$ and use (\ref{eqn:5.1}) to replace $\mathfrak{b}^4(4\tau)$ by
$$\mathfrak{b}^4(4\tau) = \frac{16(v^4-6v^2+1)^2}{(v^2+1)^4},$$
giving
\begin{equation}
j(4\tau) = \frac{(v^{16}-8v^{14}+12v^{12}+8v^{10}+230v^8+8v^6+12v^4-8v^2+1)^3}{v^8(v^2+1)^4(v^2-1)^8 (v^4-6v^2+1)^2},
\label{eqn:5.5}
\end{equation}
with $v = v(\tau)$.  Now replace $v(\tau)$ by $\bar A(v(-1/4\tau))$ from Proposition \ref{prop:3}.  This gives that
$$j(4\tau) = j_2(x^2),$$
where $x = v(-1/4\tau)$ and $j_2(x)$ is the rational function
\begin{equation}
j_2(x) = \frac{(x^{8} + 232x^{7} + 732x^{6}- 1192x^{5} + 710x^4 - 1192x^3 + 732x^2 + 232x + 1)^3}{x(x - 1)^2(x + 1)^4(x^2 - 6x+ 1)^8}.
\label{eqn:5.6}
\end{equation}
Finally, replace $\tau$ by $\tau/4$ to give that
$$j(\tau) = j_2(v^2(-1/\tau)),$$
which implies that $j_2(v^2(\tau)) = j(-1/\tau) = j(\tau)$, completing the proof.
\end{proof}

We highlight the relation
\begin{equation}
j(\tau) = j_2(v^2(\tau)),
\label{eqn:5.7}
\end{equation}
which we will make use of in Section 7.  Using the linear fractional map $t(x)$ from (\ref{eqn:3.5}) and the identity $v^2(-1/8\tau) = t(v^2(\tau))$ in (\ref{eqn:3.4}) yields
$$j\left(\frac{-1}{8\tau}\right) = j_{2}\left(v^2\left(\frac{-1}{8\tau}\right)\right) = j_2(t(v^2(\tau))).$$
A calculation on Maple shows that
$$j_{22}(x) = j_2(t(x)) = \frac{(x^8 - 8x^7 + 12x^6 + 8x^5 - 10x^4 + 8x^3 + 12x^2 - 8x + 1)^3}{x^8(x - 1)^4(x + 1)^2(x^2 - 6x + 1)}.$$
Therefore,
\begin{equation}
j\left(\frac{-1}{8\tau}\right) =  j_{22}(v^2(\tau)).
\label{eqn:5.8}
\end{equation}
Since $\Phi_8(j(\tau), j(-1/8\tau)) = 0$, elliptic curves with these two $j$-invariants are connected by an isogeny of degree $8$.  Hence, the values $j_2(x)$ and $j_{22}(x)$ are the $j$-invariants of two isogenous elliptic curves.  \medskip

We take this opportunity to prove the following known identity (see \cite[p. 154]{d}) from the results we have established so far.

\begin{prop}
\begin{equation}
v^{-2}(\tau) + v^2(\tau) - 6 = \frac{\eta^4(\tau) \eta^2(4\tau)}{\eta^2(2\tau) \eta^4(8\tau)}.
\label{eqn:5.9}
\end{equation}
\label{prop:10}
\end{prop}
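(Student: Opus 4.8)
The plan is to reduce the claimed identity to the relation already extracted inside the proof of Proposition \ref{prop:8}, and then to finish with a routine manipulation of infinite products. First I would clear denominators on the left, writing
$$v^{-2}(\tau)+v^2(\tau)-6 = \frac{v^4-6v^2+1}{v^2}, \qquad v = v(\tau).$$
The proof of Proposition \ref{prop:8} establishes, with $y=v(\tau)$, the intermediate equality $\frac{\mathfrak{b}^4(4\tau)}{16} = \frac{(v^4-6v^2+1)^2}{(v^2+1)^4}$; taking the positive square root (legitimate since $\mathfrak{b}(i\infty)=2$ and $v(i\infty)=0$) gives $v^4-6v^2+1 = \frac{(v^2+1)^2\mathfrak{b}^2(4\tau)}{4}$. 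Substituting this into the display above reduces the proposition to the single identity
$$\left(v+\frac{1}{v}\right)^2\frac{\mathfrak{b}^2(4\tau)}{4} = \frac{\eta^4(\tau)\eta^2(4\tau)}{\eta^2(2\tau)\eta^4(8\tau)}.$$
Routing through Proposition \ref{prop:8} is preferable to squaring the product for $1/v-v$ obtained in Proposition \ref{prop:5} directly, since the latter would leave an additive constant $-4$ that would demand a further theta identity to absorb, whereas Proposition \ref{prop:8} has already packaged $v^4-6v^2+1$ as a perfect product.

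Next I would produce infinite products for the two factors on the left. For $\mathfrak{b}(4\tau)$ I use the formula $\mathfrak{b}(4\tau)=2\varphi(-q)/\varphi(q)$ derived in the proof of Proposition \ref{prop:7}, together with (\ref{eqn:2.6}), to write $\mathfrak{b}(4\tau)=2(q;q^2)_\infty^2/(-q;q^2)_\infty^2$. For $v+1/v$ I would repeat the theta-function computation of $1/v-v$ from the proof of Proposition \ref{prop:5}, but retaining the plus sign: after multiplying numerator and denominator by $(q^8;q^8)_\infty^2$ and applying Jacobi's triple product, the numerator becomes $f^2(-q^3,-q^5)+q\,f^2(-q,-q^7)$, and the $q\mapsto -q$ forms of (\ref{eqn:2.14}) and (\ref{eqn:2.15}) now collapse it to $\psi(-q)\varphi(q)$ in place of $\psi(-q)\varphi(q^2)$. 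This yields a clean product for $v+1/v$ entirely analogous to the one obtained there for $1/v-v$.

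Finally I would substitute both products into the reduced identity, expand each $\eta$ via $\eta(\tau)=q^{1/24}(q;q)_\infty$ from (\ref{eqn:2.10}), and check equality. The fractional powers of $q$ on the right combine to $q^{-1}$, matching the leading term of the left-hand side (and of the original expression, since $v\sim q^{1/2}$). What remains is an elementary reconciliation of $q$-Pochhammer symbols, carried out with the standard splittings $(q;q)_\infty=(q;q^2)_\infty(q^2;q^2)_\infty$, $(-q;q^2)_\infty(q;q^2)_\infty=(q^2;q^4)_\infty$, and $(q^2;q^2)_\infty=(q^2;q^4)_\infty(q^4;q^4)_\infty$. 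I expect no genuine difficulty here: the only points requiring care are the bookkeeping of these products and the choice of sign when extracting the square root from Proposition \ref{prop:8}, both of which are pinned down by the common leading behavior $q^{-1}$ as $q\to 0$.
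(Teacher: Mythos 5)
Your argument is correct, and it shares its starting point with the paper's proof --- both reduce $v^{-2}+v^2-6=\frac{v^4-6v^2+1}{v^2}$ to Proposition \ref{prop:8} --- but the middle of the argument is genuinely different. The paper eliminates $v$ entirely by the algebraic identity $\frac{v^4-6v^2+1}{v^2}=\frac{8}{\frac{(v^2+1)^2}{v^4-6v^2+1}-1}=\frac{8\mathfrak{b}^2(4\tau)}{4-\mathfrak{b}^2(4\tau)}$, then substitutes $\mathfrak{b}(4\tau)=2\varphi(-q)/\varphi(q)$ and invokes (\ref{eqn:2.18}) to collapse the denominator, landing on $\varphi^2(-q)/\bigl(q\psi^2(q^4)\bigr)$, which is read off as the eta quotient. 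You instead keep the factor $\bigl(v+\tfrac1v\bigr)^2$ and pay for it with a new product evaluation of $v+1/v$, obtained by re-running the theta computation of Proposition \ref{prop:5} with the opposite sign; this does work --- the $q\mapsto -q$ forms of (\ref{eqn:2.14}) and (\ref{eqn:2.15}) give $f^2(-q^3,-q^5)+qf^2(-q,-q^7)=\psi(-q)\varphi(q)$, so $v+\tfrac1v=\frac{\psi(-q)\varphi(q)}{q^{1/2}(q;q^2)_\infty(q^8;q^8)_\infty^2}$, and multiplying by $\mathfrak{b}^2(4\tau)/4=\varphi^2(-q)/\varphi^2(q)$ yields $\frac{\psi^2(-q)\varphi^2(-q)}{q(q;q^2)_\infty^2(q^8;q^8)_\infty^4}$, which the splittings you list do reconcile with $q^{-1}(q;q)_\infty^4(q^4;q^4)_\infty^2(q^2;q^2)_\infty^{-2}(q^8;q^8)_\infty^{-4}$. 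The trade-off is that your route requires one extra Jacobi-triple-product computation where the paper gets by with the already-catalogued identity (\ref{eqn:2.18}); in exchange you avoid the slightly opaque ``$8/(\,\cdot\,-1)$'' manipulation and your intermediate formula for $v+1/v$ is of independent interest as the companion to (\ref{eqn:4.3}). Your observation that squaring (\ref{eqn:4.3}) directly would leave a stray $-4$ is also accurate and justifies the detour.
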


\begin{proof}
We will show that (\ref{eqn:5.9}) follows from (\ref{eqn:5.1}).  We first have that
\begin{align*}
v^{-2}(\tau)+v^2(\tau)-6&=\frac{v^4(\tau)-6v^2(\tau)+1}{v^2(\tau)}\\&=\frac{8}{\left(\frac{(v^2(\tau)+1)^2}{v^4(\tau)-6v^2(\tau)+1}\right)-1}\\
&=\frac{8}{\left(\frac{4}{\mathfrak{b}^2(4\tau)}\right)-1}=\frac{8\mathfrak{b}^2(4\tau)}{4-\mathfrak{b}^2(4\tau)},
\end{align*}
by (\ref{eqn:5.1}).  Now using the expression $\mathfrak{b}(4\tau)=2\varphi(-q)/\varphi(q)$ from the proof of Proposition \ref{prop:7}a) and (\ref{eqn:2.18}) gives 
\begin{equation*}
v^{-2}(\tau)+v^2(\tau)-6=\frac{8\left(\frac{4\varphi^2(-q)}{\varphi^2(q)}\right)}{4-\left(\frac{4\varphi^2(-q)}{\varphi^2(q)}\right)} =\frac{8\varphi^2(-q)}{\varphi^2(q)-\varphi^2(-q)} =\frac{8\varphi^2(-q)}{8q\psi^2(q^4)}.
\end{equation*}
Now putting $\varphi(-q)=(q;q^2)_\infty^2(q^2;q^2)_\infty=\frac{(q;q)_\infty^2}{(q^2;q^2)_\infty}$ and $\psi(q)=\frac{(q^2;q^2)_\infty}{(q;q^2)_\infty}=\frac{(q^2;q^2)_\infty^2}{(q;q)_\infty}$ yields
\begin{align*}
v^{-2}(\tau)+v^2(\tau)-6&=\varphi^2(-q)\cdot\left(\frac{1}{q\psi^2(q^4)}\right)\\
&=(q;q^2)_\infty^4(q^2;q^2)_\infty^2\cdot\left(\frac{(q^4;q^8)_\infty^2}{q(q^8;q^8)_\infty^2}\right)\\
&=\left(\frac{(q;q)_\infty^4}{(q^2;q^2)_\infty^2}\right)\cdot\left(\frac{(q^4;q^4)_\infty^2}{q(q^8;q^8)_\infty^4}\right)\\
&=\frac{q^{1/6}(q;q)_\infty^4\cdot q^{1/3}(q^4;q^4)_\infty^2}{q^{1/6}(q^2;q^2)_\infty^2\cdot q^{4/3}(q^8;q^8)_\infty^4}\\
&=\frac{\eta^4(\tau)\eta^2(4\tau)}{\eta^2(2\tau)\eta^4(8\tau)},
\end{align*}
using that $\eta(\tau)= q^{1/24}(q;q)_\infty$.
\end{proof}

\section{The field generated by $v(w/8)$.}

In this section we take $\tau = w/8$, where $-d \equiv 1$ (mod $8$) and
\begin{equation}
w = \frac{a+\sqrt{-d}}{2}, \ \ \textrm{with} \ a^2+d \equiv 0 \ (\textrm{mod} \ 2^5), \ (N(w), f) = 1.
\label{eqn:6.1}
\end{equation}
For this value of $w$,
$$\mathfrak{b}^4(8\tau) = \mathfrak{b}^4(w)$$
is the fourth power of the number
\begin{equation}
\beta = i^{-a} \mathfrak{b}(w)
\label{eqn:6.2}
\end{equation}
from \cite[(10.3), Thms. 10.6, 10.7]{lm}.  We also need the number $\pi$ from \cite[(10.2),(10.9)]{lm}, which is given by 
\begin{align*}
\pi &= i^{\bar c} \frac{\mathfrak{f}_2(w/2)^2}{\mathfrak{f}(w/2)^2} = i^{\bar c} \mathfrak{p}(w),\\
\bar c & \equiv a\left(2-\frac{a^2+d}{16}\right) \ \ (\textrm{mod} \ 4).
\end{align*}
(We have replaced $v$ in the formulas of \cite{lm} by $a$ and $a$ by $\bar c$.) But here the integer $a^2+d$ is divisible by $32$, by (\ref{eqn:6.1}), so $\bar c$ is even.  Replacing $\bar c$ by the integer $c = \bar c/2$, satisfying
$$c \equiv 1-\frac{a^2+d}{32} \ (\textrm{mod} \ 2)$$
yields
\begin{equation}
\pi =  (-1)^{c} \mathfrak{p}(w), \ \ \ w = \frac{a+\sqrt{-d}}{2}.
\label{eqn:6.3}
\end{equation} 
It follows from the results of \cite{lm} that $\xi = \beta/2$ and $\pi$ lie in the ring class field $\Omega_f$ of the quadratic field $K = \mathbb{Q}(\sqrt{-d})$ (where $-d=\mathfrak{d}_K f^2$ and $\mathfrak{d}_K$ is the discriminant of $K/\mathbb{Q}$) and $\xi^4+\pi^4 = 1$.  Furthermore, $\mathbb{Q}(\pi) = \mathbb{Q}(\pi^4) = \Omega_f$.  We also note that $(\xi) = \wp_2'$ and $(\pi) = \wp_2$ in $\Omega_f$, so that $(\xi \pi) = (2)$.
\medskip

From (\ref{eqn:4.3}) and (\ref{eqn:6.3}) we have that
\begin{equation}
(-1)^{c}\frac{2}{\pi} = \frac{1}{v(w/8)} - v(w/8) = \frac{1-v^2(w/8)}{v(w/8)}.
\label{eqn:6.4}
\end{equation}
In particular, $v(w/8)$ satisfies a quadratic equation over $\Omega_f$ and the map $\rho: v(w/8) \rightarrow \frac{-1}{v(w/8)}$ leaves the right side of (\ref{eqn:6.4}) invariant.  On squaring (\ref{eqn:6.4}), we see that $X = v^2(w/8)$ satisfies the equation
\begin{equation}
X^2-(2+\frac{4}{\pi^2})X + 1 = 0,
\label{eqn:6.5}
\end{equation}
and therefore
$$v^2(w/8) = \frac{\pi^2+2 \pm 2 \sqrt{\pi^2+1}}{\pi^2} = \left(\frac{1 \pm \sqrt{1+\pi^2}}{\pi}\right)^2.$$
Hence
\begin{equation}
v(w/8) = \pm \frac{1 \pm \sqrt{1+\pi^2}}{\pi}.
\label{eqn:6.6}
\end{equation}
It follows from these expressions that $\Omega_f(v(w/8)) = \Omega_f(v^2(w/8)) = \Omega_f(\sqrt{1+\pi^2})$.
\medskip

Let $-d \equiv 1$ (mod $8$) and set $-d = \mathfrak{d}_K f^2$, where $\mathfrak{d}_K$ is the discriminant of the field
$K = \mathbb{Q}(\sqrt{-d})$.  Further, let $2 \cong \wp_2 \wp_2'$ in the ring of integers $R_K$ of $K$.  We denote by $\Sigma_\mathfrak{f}$ the ray class field of conductor $\mathfrak{f}$ over $K$ and $\Omega_f$ the ring class field of conductor $f$ over $K$. \bigskip

We now prove the following.

\newtheorem{thm}{Theorem}

\begin{thm} If
$$w = \frac{a+\sqrt{-d}}{2}, \ \ \textrm{with} \ a^2+d \equiv 0 \ (\textrm{mod} \ 2^5),$$
and $\wp_2 = (2,w)$ in $R_K$, then the field $\mathbb{Q}(v(w/8)) = \mathbb{Q}(\sqrt{1+\pi^2})$ coincides with the field $\Sigma_{\wp_2'^3} \Omega_f$.  The units $v(w/8)$ and $v^2(w/8)$ have degree $4h(-d)$ over $\mathbb{Q}$.
\label{thm:1}
\end{thm}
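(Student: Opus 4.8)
The plan is to reduce the claimed field equality to a statement about a single quadratic extension of $\Omega_f$, and then to match degrees and identify that extension by class field theory.

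First I would show $\Omega_f \subseteq \mathbb{Q}(v(w/8))$. Solving (\ref{eqn:6.4}) for $\pi$ gives
$$\pi = (-1)^{c}\,\frac{2\,v(w/8)}{1 - v^2(w/8)} \in \mathbb{Q}(v(w/8)),$$
so $\Omega_f = \mathbb{Q}(\pi) \subseteq \mathbb{Q}(v(w/8))$, and with the relation $\Omega_f(v(w/8)) = \Omega_f(\sqrt{1+\pi^2})$ already established this yields $\mathbb{Q}(v(w/8)) = \Omega_f(\sqrt{1+\pi^2})$. Since $\mathbb{Q}(\pi^4) \subseteq \mathbb{Q}(\pi^2) \subseteq \mathbb{Q}(\pi) = \mathbb{Q}(\pi^4) = \Omega_f$ forces $\mathbb{Q}(\pi^2) = \Omega_f$, and $\pi^2 = (\sqrt{1+\pi^2})^2 - 1$, we also get $\Omega_f \subseteq \mathbb{Q}(\sqrt{1+\pi^2})$, whence $\mathbb{Q}(v(w/8)) = \mathbb{Q}(\sqrt{1+\pi^2})$, as stated. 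The problem is thereby reduced to the single quadratic extension $\Omega_f(\sqrt{1+\pi^2})/\Omega_f$. For the unit assertion, put $\gamma = (-1)^{c}\,2/\pi$; the ideal $(2/\pi) = \wp_2\wp_2'/\wp_2 = \wp_2'$ is integral, so $\gamma$ is an algebraic integer, and (\ref{eqn:6.4}) becomes the monic relation $v^2 + \gamma v - 1 = 0$ with $v = v(w/8)$. Its constant term is $-1$, so $v(w/8)$, and hence $v^2(w/8)$, is a unit.

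Next I would compute both degrees. On the class-field side, the exact sequence $R_K^* \to (R_K/\wp_2'^3)^* \to \mathrm{Cl}_{\wp_2'^3}(K) \to \mathrm{Cl}(K) \to 1$ gives $[\Sigma_{\wp_2'^3}:K] = 2\,h(\mathfrak{d}_K)$, because $|(R_K/\wp_2'^3)^*| = 4$ while the image of $\{\pm 1\}$ has order $2$ (as $v_{\wp_2'}(2) = 1 < 3$). Because $\Omega_f/K$ ramifies only at primes dividing the odd conductor $f$ and $\Sigma_{\wp_2'^3}/K$ only at $\wp_2'$, the intersection $\Sigma_{\wp_2'^3} \cap \Omega_f$ is unramified and abelian over $K$, hence equals the Hilbert class field $H_K$; consequently $[\Sigma_{\wp_2'^3}\Omega_f : \Omega_f] = 2$ and $[\Sigma_{\wp_2'^3}\Omega_f : \mathbb{Q}] = 4\,h(-d)$. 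On the other side, from $\xi^4 = 1 - \pi^4 = (1-\pi^2)(1+\pi^2)$ with $(\xi) = \wp_2'$, together with $v_{\mathfrak{P}}(2\pi^2) = 1$ at each prime $\mathfrak{P} \mid \wp_2'$ of $\Omega_f$, the valuations of the two factors $1 \pm \pi^2$ must be $\{1,3\}$; in particular $v_{\mathfrak{P}}(1+\pi^2)$ is odd, so $1+\pi^2$ is not a square in $\Omega_f$ and $[\Omega_f(\sqrt{1+\pi^2}):\Omega_f] = 2$. Thus $[\mathbb{Q}(v(w/8)):\mathbb{Q}] = 4\,h(-d)$, in agreement with the class-field side.

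There remains the equality $\Omega_f(\sqrt{1+\pi^2}) = \Sigma_{\wp_2'^3}\Omega_f$ of these two quadratic extensions of $\Omega_f$; by the matching degrees it suffices to prove one inclusion, and I expect this to be the main obstacle. The extension $\Omega_f(\sqrt{1+\pi^2})/\Omega_f$ is ramified at $\wp_2'$ by the odd valuation just found, and the task is to show it is unramified at $\wp_2$ with conductor exactly $\wp_2'^3$, so that it is precisely the quadratic piece of $\Sigma_{\wp_2'^3}\Omega_f$ over $\Omega_f$. This is a local computation at the two primes above $2$, governed by the square class of the unit $1+\pi^2$ in the completions, which I would carry out using the relation $\xi^4 + \pi^4 = 1$ and the defining congruence $a^2 + d \equiv 0 \pmod{2^5}$. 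Alternatively, one can bypass the local analysis by invoking the complex-multiplication description, via Shimura reciprocity, of the theta quotient $v(w/8) = e^{-2\pi i/8}\,\theta\big[{3/4 \atop 1}\big](w)/\theta\big[{1/4 \atop 1}\big](w)$, which places $v(w/8)$ directly in $\Sigma_{\wp_2'^3}\Omega_f$ as in \cite{lm}. Either route supplies the needed inclusion and completes the proof.
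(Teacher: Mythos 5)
Your reductions are sound and in places more explicit than the paper's: the identification $\mathbb{Q}(v(w/8)) = \Omega_f(\sqrt{1+\pi^2})$, the monic relation $v^2+\gamma v-1=0$ with $\gamma = (-1)^c\,2/\pi$ integral (which settles the unit claim, left implicit in the paper's proof), the degree count $[\Sigma_{\wp_2'^3}\Omega_f:\mathbb{Q}] = 4h(-d)$, and the valuation argument showing $v_{\mathfrak{P}}(1+\pi^2)$ is odd at each $\mathfrak{P}\mid\wp_2'$ (the paper instead cites $1+\pi^2\cong\wp_2'$ from an earlier work) are all correct. The gap is exactly where you predicted it: the identification of the two quadratic extensions of $\Omega_f$, and neither of your proposed routes closes it as stated. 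Route 1 presupposes that $\Lambda = \Omega_f(\sqrt{1+\pi^2})$ is abelian over $K$ --- a priori it is just some quadratic extension of $\Omega_f$, and you never establish this --- and even granting abelianness, showing $\Lambda/\Omega_f$ unramified at $\wp_2$ with relative discriminant $\wp_2'^3$ only bounds the conductor of $\Lambda/K$ by $\wp_2'^3$ times something supported on $f$; there can be several quadratic extensions of $\Omega_f$, abelian over $K$ and ramified only at $\wp_2'$, so conductor data alone does not single out $\Sigma_{\wp_2'^3}\Omega_f$. Route 2 misstates what Shimura reciprocity delivers: since $v^2(\tau)$ is a modular function for $\Gamma_1(8)$, Schertz's theorem places $v^2(w/8)$ in the extended ring class field $L_{\mathcal{O},8} = \Sigma_8\Omega_f$, not directly in $\Sigma_{\wp_2'^3}\Omega_f$; descending from level $8$ to conductor $\wp_2'^3$ is the content of the proof, not its input.

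The paper assembles precisely the pieces your sketch omits. It first obtains $\Lambda\subseteq\Sigma_8\Omega_f$ (so $\Lambda/K$ is abelian of conductor dividing $8f$); it then shows $\wp_2$ is unramified in $\Lambda$ by exhibiting the equation $h(X) = X^2-\frac{4}{\pi^2}(X+1)$ satisfied by $v^2(w/8)-1$, whose discriminant is prime to $\wp_2$ because $(16/\pi^4)=\wp_2'^4$, and computes $\mathfrak{d}_{\Lambda/\Omega_f}=\wp_2'^3$ via Dedekind's theorem. Finally --- and this is the step with no analogue in your proposal --- it computes $[\Sigma_8:\Sigma_1]=8$, identifies $\mathrm{Gal}(\Sigma_8\Omega_f/\Omega_f)\cong\mathrm{Gal}(\Sigma_8/\Sigma_1)\cong(\mathbb{Z}/2\mathbb{Z})^3$, and enumerates all seven quadratic subextensions of $\Sigma_8/\Sigma_1$ with their conductors, verifying that $\Sigma_{\wp_2'^3}$ is the unique one whose conductor is prime to $\wp_2$. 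Only this enumeration, combined with the unramifiedness of $\wp_2$ in $\Lambda$, forces $\Lambda = \Sigma_{\wp_2'^3}\Omega_f$. To complete your argument you would need to import the containment $\Lambda\subseteq\Sigma_8\Omega_f$ (from Schertz's theorem, or from the paper's alternative argument using the $8$-torsion of the Tate normal form $E_1(\alpha)$) and then carry out the subfield enumeration.
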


\begin{proof}
Let $\Lambda = \mathbb{Q}(\sqrt{1+\pi^2})$.  It is clear that $\Lambda$ contains the ring class field $\Omega_f$, since $\mathbb{Q}(\pi^4) = \Omega_f$.  We use the fact that $1+\pi^2 \cong \wp_2'$ from \cite[Lemma 5]{m}.  From this fact it is clear that $1+\pi^2$ is not a square in $\Omega_f$, since $\wp_2'$ is unramified in $\Omega_f/K$.  Hence, $[\Lambda: \Omega_f] =  2$.  Further, the prime divisors $\mathfrak{q}$ of $\wp_2'$ in $\Omega_f$ are certainly ramified in $\Lambda$.  Equation (\ref{eqn:6.5}) implies that $x = v^2(w/8)$ satisfies $(x-1)^2/(4x) = 1/\pi^2$, and therefore $\mathbb{Q}(v^2(w/8)) = \mathbb{Q}(\sqrt{1+\pi^2})$.  This implies that $[\mathbb{Q}(v^2(w/8)):\mathbb{Q}] = 4h(-d)$, since
$$[\Lambda: \mathbb{Q}] = [\Lambda: \Omega_f] [\Omega_f: K] [K: \mathbb{Q}] = 4h(-d).$$
Since $v^2(\tau)$ is a modular function for $\Gamma_1(8)$ (\cite[p.154]{d}), it follows from Schertz \cite[Thm. 5.1.2]{sch} that $v^2(w/8) \in \Sigma_{8f}$, the ray class field of conductor $8f$ over $K$.  More precisely, $v^2(w/8) \in L_{\mathcal{O},8}$, where $L_{\mathcal{O},8} = \Sigma_8 \Omega_f$ is an {\it extended ring class field} corresponding to the order $\mathcal{O} = \textsf{R}_{-d}$.  See \cite[p. 315]{co}.  Thus, $\Lambda \subset L_{\mathcal{O},8}$ is an abelian extension of $K$, whose conductor $\mathfrak{f}$ divides $8f$ in $K$.  The discriminant of the polynomial $X^2-(1+\pi^2)$ is of course $4(1+\pi^2) \cong \wp_2^2 \wp_2'^3$.  Since the ramification index of each $\mathfrak{q} \mid \wp_2'$ is $e_\mathfrak{q} = 2$ in $\Lambda/\Omega_f$, Dedekind's discriminant theorem says that at least $\wp_2'^2$ divides the discriminant $\mathfrak{d} = \mathfrak{d}_{\Lambda/\Omega_f}$, and since the power of $\mathfrak{q}$ in $\mathfrak{d}$ is odd and at most $3$ ($\Omega_f/K$ is unramified over $2$), it follows that $\wp_2'^3$ exactly divides $\mathfrak{d}$.  We claim now that $\wp_2$ is unramified in $\Lambda$. \medskip

From above $x = v^2(w/8)$ satisfies $(x-1)^2 - \frac{4}{\pi^2}x = 0$.  Thus $x_1 = x-1$ satisfies $h(x_1) = 0$, with
$$h(X) = X^2 - \frac{4}{\pi^2}(X+1),  \ \ \textrm{disc}(h(X)) = \frac{16}{\pi^4} + 4\frac{4}{\pi^2},$$
where the ideal $(\frac{16}{\pi^4}) = \left(\frac{2}{\pi}\right)^4 = (\xi)^4 =\wp_2'^4$ is not divisible by $\wp_2$.  This shows that $\textrm{disc}(h(X))$ is not divisible by $\wp_2$ and therefore that $\wp_2$ is unramified in $\mathbb{Q}(v^2(w/8))$.  Thus $\mathfrak{d} = \wp_2'^3$. \medskip

Now $[\Sigma_8: \Sigma_1] = \frac{1}{2} \phi_K(\wp_2^3 \wp_2'^3) = 8,$ where $\phi_K$ is the Euler function for the quadratic field $K$, and $\mathbb{Q}(\zeta_8) \subset \Sigma_8$.  Since the prime divisors of $2$ do not ramify in $\Omega_f$, we have that $\Omega_f \cap \Sigma_8 = \Sigma_1$ and therefore
$$[L_{\mathcal{O},8}:\Omega_f] = [\Sigma_8 \Omega_f: \Omega_f] = [\Sigma_8: \Sigma_1] = 8,$$
from which we obtain
$$\textrm{Gal}(\Sigma_8 \Omega_f/ \Omega_f) \cong \textrm{Gal}(\Sigma_8/ \Sigma_1).$$
By this isomorphism the intermediate fields $L\Omega_f$ of $\Sigma_8 \Omega_f/ \Omega_f$ are in $1-1$ correspondence with the intermediate fields $L$ of $\Sigma_8/ \Sigma_1$.  \medskip

Now the ray class field $\Sigma_{\wp_2^2\wp_2'^3}$ has degree $4$ over the Hilbert class field $\Sigma_1$, and two of its quadratic subfields are $\Sigma_{\wp_2'^3}$ and $\Sigma_{\wp_2^2 \wp_2'^2} = \Sigma_4 = \Sigma_1(i)$.  It follows that $\textrm{Gal}(\Sigma_{\wp_2^2\wp_2'^3}/\Sigma_1) \cong \mathbb{Z}_2 \times \mathbb{Z}_2$ and the third quadratic subfield has conductor equal to $\mathfrak{f}' = \wp_2^2\wp_2'^3$ over $K$.  The other quadratic intermediate fields of $\Sigma_8/\Sigma_1$ are $\Sigma_1(\sqrt{2})$ and $\Sigma_1(\sqrt{-2})$,  both of which have conductor $(8) = \wp_2^3 \wp_2'^3$ over $K$, the field $\Sigma_{\wp_2^3}$, and a field whose conductor over $K$ is $\wp_2'^2\wp_2^3$.  Hence, $L = \Sigma_{\wp_2'^3}$ is the only quadratic intermediate field whose conductor is not divisible by $\wp_2$.  This proves that $\mathbb{Q}(v^2(w/8)) = \Sigma_{\wp_2'^3} \Omega_f$ is a quadratic extension of $\Omega_f$ and (\ref{eqn:6.6}) shows that $\mathbb{Q}(v(w/8)) = \mathbb{Q}(v^2(w/8)) = \Sigma_{\wp_2'^3} \Omega_f$.
\end{proof}

\noindent {\bf Corollary.} {\it The field $\mathbb{Q}(v(w/8)) = \Sigma_{\wp_2'^3}\Omega_f$ is the inertia field for the prime ideal $\wp_2$ in the extension $L_{\mathcal{O},8}/K = \Sigma_8 \Omega_f/K$.}

\begin{proof}
The above proof implies that $\textrm{Gal}(\Sigma_8 \Omega_f/\Omega_f) \cong \mathbb{Z}_2 \times \mathbb{Z}_2 \times \mathbb{Z}_2$, since there are $7$ quadratic intermediate fields.  Any subfield containing $\Omega_f$ which properly contains $\Sigma_{\wp_2'^3}$ must also contain another quadratic subfield, in which $\wp_2$ must ramify.
\end{proof}

The fact that $v^2(w/8) \in L_{\mathcal{O},8}$ in the above proof is derived using Shimura's Reciprocity Law.  We can give a more elementary proof of this fact by showing that $\sqrt{1+\pi^2} \in L_{\mathcal{O},8}$, as follows.  We focus on the elliptic curve
$$E_1(\alpha): \ Y^2 + XY+\frac{1}{\alpha^4}Y = X^3 + \frac{1}{\alpha^4}X^2,$$
which is the Tate normal form for a point of order $4$, with
$$\alpha^4 = \alpha(w)^4 = -\left(\frac{\eta(w/4)}{\eta(w)}\right)^8,$$
as in (\ref{eqn:5.2}).  From \cite[(2.10), Prop. 3.2, p. 1970]{lm}, the curve $E_1 = E_1(\alpha)$ has complex multiplication by the order $\mathcal{O} = \textsf{R}_{-d}$ of discriminant $-d$ in $K$.  Now, with $\beta = i^{-a} \mathfrak{b}(w)$ as in (\ref{eqn:6.2}),
$$\frac{1}{\alpha^4} = \frac{\beta^4-16}{16\beta^4} = \frac{1}{16} - \frac{1}{\beta^4},$$
and Lynch \cite{ly} has given explicit expressions for the points of order $8$ on $E_1$ in terms of $\beta$.  Lynch \cite[Prop. 3.3.1, p. 38]{ly} defines the following expressions:
\begin{align*}
b_1 & = \frac{\beta \sqrt{2}+(\beta^2+4)^{1/2}+(\beta^2-4)^{1/2}}{2\beta \sqrt{2}},\\
b_2 & = \frac{\beta \sqrt{2}+(\beta^2+4)^{1/2}-(\beta^2-4)^{1/2}}{2\beta \sqrt{2}},\\
b_3 & = \frac{\beta \sqrt{2}-(\beta^2+4)^{1/2}+(\beta^2-4)^{1/2}}{2\beta \sqrt{2}},\\
b_4 & = \frac{\beta \sqrt{2}-(\beta^2+4)^{1/2}-(\beta^2-4)^{1/2}}{2\beta \sqrt{2}}.
\end{align*}
With these expressions, Lynch shows \cite[Thm. 3.3.1, p. 41]{ly} that the points
$$(X,Y) = P_1 = (b_1b_3b_4, -b_1b_3^3 b_4) \ \ \textrm{and} \ \ P_2 = (b_2 b_3 b_4, -b_2b_3b_4^3)$$
are points of order $8$ on $E_1(\alpha)$.  By \cite[Satz 2]{fr} or \cite[Prop. 6.4]{lm} the corresponding Weber functions satisfy
$$\frac{g_2 g_3}{\Delta} \left(X(P_i) + \frac{4b+1}{12}\right) \in \Sigma_8\Omega_f, \ \ \ b = \frac{1}{\alpha^4}.$$
(See \cite[(6.1)]{lm}. The expression inside the parentheses arises from putting the curve $E_1(\alpha)$ in standard Weierstrass form.)  As in \cite[p. 1976]{lm}, $b, g_2, g_3, \Delta \in \Omega_f$, so that $X(P_i) = b_i b_3 b_4 \in L_{\mathcal{O},8}$ for $i = 1,2$.  This implies that
\begin{align*}
(b_1+b_2)b_3b_4 &= \left(\frac{\sqrt{2}\beta+(\beta^2+4)^{1/2}}{\sqrt{2} \beta}\right) \left(\frac{\beta^2+4-\sqrt{2} \beta (\beta^2+4)^{1/2}}{4\beta^2}\right)\\
& = \frac{4-\beta^2}{4\sqrt{2} \beta^3} (\beta^2+4)^{1/2}
\end{align*}
lies in $L_{\mathcal{O},8}$.  But  we know that $4-\beta^2 \neq 0$.  In addition, $\sqrt{2} \in \mathbb{Q}(\zeta_8) \subset \Sigma_8$ and $\beta \in \Omega_f$, so that $(\beta^2+4)^{1/2} = 2 \sqrt{\xi^2+1} \in L_{\mathcal{O},8}$, with $\xi = \beta/2$.  Now $\pi$ and $\xi$ are conjugate over $\mathbb{Q}$, hence $\pm \sqrt{1+\pi^2}$ is conjugate to $\sqrt{1+\xi^2}$ over $\mathbb{Q}$.  Since $\Sigma_8 \Omega_f$ is normal over $\mathbb{Q}$, this implies that $\sqrt{1+\pi^2} \in L_{\mathcal{O},8}$, which proves the assertion.  \medskip

\begin{prop} Assume $c$ in (\ref{eqn:6.3}) is odd.  The map $A(x) = \frac{\sigma x +1}{x-\sigma}$ (see (\ref{eqn:3.3})) fixes the set of conjugates of $v(w/8)$.  If $f_d(x)$ is the minimal polynomial of $v(w/8)$ over $\mathbb{Q}$, then
$$(x-\sigma)^{4h(-d)} f_d(A(x)) = 2^{3h(-d)} \sigma^{2h(-d)}f_d(x).$$
\label{prop:10}
\end{prop}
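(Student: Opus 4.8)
The plan is to obtain the polynomial identity of part~(ii) formally from the set-theoretic statement of part~(i), and to prove part~(i) from the explicit relation (\ref{eqn:1.3}) between $v(w/8)$ and $v(-1/w)$. Since $v(w/8)$ is a unit, $f_d$ is monic; I write $f_d(x)=\prod_\alpha(x-\alpha)$ over the $n=4h(-d)$ conjugates $\alpha$ of $v(w/8)$. As $\sigma$ has degree $2$ over $\mathbb{Q}$ while each $\alpha$ has degree $n\ge 4$, no $\alpha$ equals $\sigma$, so clearing denominators in $f_d(A(x))$ with $A$ as in (\ref{eqn:3.3}) gives
\[
(x-\sigma)^{n}f_d(A(x))=\prod_\alpha\big[(\sigma-\alpha)x+(1+\sigma\alpha)\big]=f_d(\sigma)\prod_\alpha\big(x-A(\alpha)\big),
\]
using $-(1+\sigma\alpha)/(\sigma-\alpha)=(\sigma\alpha+1)/(\alpha-\sigma)=A(\alpha)$. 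Granting~(i), $A$ permutes the $\alpha$, so $\prod_\alpha(x-A(\alpha))=f_d(x)$ and this collapses to $(x-\sigma)^n f_d(A(x))=f_d(\sigma)f_d(x)$. To evaluate $f_d(\sigma)=\prod_\alpha(\sigma-\alpha)$ I use the pairing relation $\sigma-A(\alpha)=(\sigma^2+1)/(\sigma-\alpha)$, i.e. $(\sigma-\alpha)(\sigma-A(\alpha))=\sigma^2+1=2\sqrt{2}\,\sigma$. Because $A$ is a fixed-point-free involution on the conjugate set (a fixed point satisfies $\alpha^2-2\sigma\alpha-1=0$, hence lies in a field unramified outside $2$, which cannot contain $K=\mathbb{Q}(\sqrt{-d})$), the conjugates split into $2h(-d)$ such pairs and
\[
f_d(\sigma)=(\sigma^2+1)^{2h(-d)}=(2\sqrt{2}\,\sigma)^{2h(-d)}=2^{3h(-d)}\sigma^{2h(-d)},
\]
which is exactly the asserted identity.

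The substance is part~(i): that $A$ maps the set $S$ of conjugates of $V=v(w/8)$ into itself. By Theorem~\ref{thm:1}, $F_1=\mathbb{Q}(V)$ has degree $4h(-d)$ and $F_1/K$ is abelian of degree $2h(-d)$, so $S=S_1\sqcup S_2$ with $S_1=\{V^\gamma:\gamma\in\mathrm{Gal}(F_1/K)\}\subset F_1$ and $S_2\subset\overline{F_1}=F_2$ the images of $V$ under embeddings restricting to complex conjugation on $K$. First, $\rho(x)=-1/x$ preserves $S$: by (\ref{eqn:6.4}), $-1/V$ is the $\mathrm{Gal}(F_1/\Omega_f)$-conjugate of $V$, and $g(-1/V)=-1/g(V)$ for every embedding $g$, so $\rho$ carries conjugates to conjugates. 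Next, with $c$ odd so that $(-1)^c=-1$, relation (\ref{eqn:1.3}) reads $v(-1/w)=-\bar A(V^{\tau_2})$; since $A=\rho\circ\bar A$ in the group $\tilde H$, this rearranges to
\[
A(V^{\tau_2})=\rho\big(\bar A(V^{\tau_2})\big)=\rho\big(-v(-1/w)\big)=\frac{1}{v(-1/w)},
\]
and $1/v(-1/w)\in S_2$ is a conjugate of $V$: it generates $F_2=\mathbb{Q}(v(-1/w))$ and is matched with $V$ as a $\mathbb{Q}$-conjugate inside the normal field $L_{\mathcal{O},8}$. (Proposition~\ref{prop:3} gives the companion identity $\bar A(V)=v(-2/w)$, which I would use to corroborate the base case.)

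To propagate from this base case I use that $A$ has coefficients in $\mathbb{Q}(\sqrt{2})$ with $\sqrt{2}\notin F_1$, so $F_1\cap\mathbb{Q}(\sqrt{2})=\mathbb{Q}$ and each $\gamma\in\mathrm{Gal}(F_1/K)$ extends to an automorphism $\hat\gamma$ of the normal closure of $F_1(\sqrt{2})/\mathbb{Q}$ that fixes $\sqrt{2}$; such $\hat\gamma$ commutes with $A$, so $A(V^{\tau_2\gamma})=\hat\gamma\big(A(V^{\tau_2})\big)=\hat\gamma\big(1/v(-1/w)\big)\in S$. As $\gamma$ runs over $\mathrm{Gal}(F_1/K)$ the points $V^{\tau_2\gamma}$ exhaust $S_1$, giving $A(S_1)\subseteq S$; applying complex conjugation (which fixes $\sqrt{2}$, commutes with $A$, and interchanges $S_1,S_2$) yields $A(S_2)\subseteq S$, so $A(S)=S$, $A$ being an injective involution on the finite set $S$. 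The hard part, and the place where the hypothesis that $c$ is odd is essential, is the base case: certifying that $1/v(-1/w)$ is a genuine $\mathbb{Q}$-conjugate of $v(w/8)$ rather than merely an element of the conjugate field $F_2$. This is where the reciprocity content of Theorems~\ref{thm:3}--\ref{thm:4} enters; note that the sign $(-1)^c$ in (\ref{eqn:1.3}) is precisely what places $A(V^{\tau_2})$ on $1/v(-1/w)\in S$ rather than on $-1/v(-1/w)$, which (since $v^2(w/8)$ also has degree $4h(-d)$, $f_d$ is not even and $S\cap(-S)=\emptyset$) cannot be a conjugate.
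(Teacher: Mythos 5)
Your reduction of the polynomial identity to the statement that $A$ permutes the conjugates is correct and is a genuinely different (and rather elegant) route: the factorization $(x-\sigma)^{4h}f_d(A(x)) = f_d(\sigma)\prod_\alpha\big(x - A(\alpha)\big)$ together with the pairing $(\sigma-\alpha)\big(\sigma - A(\alpha)\big) = \sigma^2+1 = 2\sqrt{2}\,\sigma$ does produce the constant $2^{3h}\sigma^{2h}$ once one knows that $A$ acts on the root set as a fixed-point-free involution. The paper goes the other way: it proves the identity directly by writing $f_d(x) = 2^{-h}(x^2-1)^{2h}\,b_d\big((-1)^c\tfrac{2x}{1-x^2}\big)$ (which follows from (\ref{eqn:6.4}), $b_d$ being the minimal polynomial of $\pi$) and then invoking the functional equation $(x-1)^{2h}b_d\big(\tfrac{x+1}{x-1}\big) = 2^h b_d(x)$ from \cite{lm}; the permutation statement is then a corollary of the identity rather than an input to it.

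The genuine gap is in your proof of the permutation statement, and it is twofold. First, circularity: you take relation (\ref{eqn:1.3}) (Theorem A(c), i.e.\ Theorem \ref{thm:3}) as given, but that theorem is proved \emph{after} and \emph{using} the present proposition --- its proof cites Proposition \ref{prop:11} to conclude that $\bar A(\lambda)$ and $\lambda = -v(-1/w)$ are roots of $f_d$, and hence that $\psi:\eta\mapsto\bar A(\lambda)$ is a well-defined automorphism of $F_1$. So (\ref{eqn:1.3}) cannot be used here without an independent proof of it. Second, even setting the ordering aside, the step you yourself flag as ``the hard part'' --- that $1/v(-1/w)$ is an actual root of $f_d$ and not merely a generator of the conjugate field $F_2$ --- is asserted, not proved; generating a field isomorphic to $F_1$ inside a normal closure does not make an element a root of the same minimal polynomial. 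The paper closes exactly this gap by a computation that does not depend on Theorem \ref{thm:3}: equation (\ref{eqn:7.7}) shows $\tfrac{2\lambda}{\lambda^2-1} = \tfrac{\pi^{\tau_2}+1}{\pi^{\tau_2}-1}$, which is a root of $b_d$, whence $f_d(\lambda) = 0$ by (\ref{eqn:6.7}). That computation is the same $b_d$--$f_d$ dictionary that powers the paper's direct proof of the proposition, so there is no way to avoid introducing it; and once (\ref{eqn:6.7}) and the functional equation of $b_d$ are in hand, the asserted identity follows in a few lines and the conjugate-pairing argument becomes unnecessary.
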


\begin{proof}
Note that (\ref{eqn:6.4}) implies that the minimal polynomial of $v(w/8)$ is
\begin{equation}
f_d(x) = 2^{-h(-d)}(x^2-1)^{2h(-d)}b_d\left((-1)^{c} \frac{2x}{1-x^2}\right),
\label{eqn:6.7}
\end{equation}
where $b_d(x)$ is the minimal polynomial of $\pi$.  Note that the degree of $b_d(x)$ is $2h(-d)$ and the constant term of $b_d(x)$ is
$$N_{\Omega_f/\mathbb{Q}}(\pi)= N_{\Omega_f/\mathbb{Q}}(\wp_2)=N_{K/\mathbb{Q}}(\wp_2^{h(-d)})=2^{h(-d)}$$
from \cite{lm}.  Thus, $\textrm{deg}(f_d(x)) = 4h(-d)$, which implies by Theorem \ref{thm:1} that $f_d(x)$ is irreducible.  \medskip

We use (\ref{eqn:6.7}) to prove the proposition, as follows.  Setting $h = h(-d)$ and assuming $c$ is odd, we have that
\begin{align*}
(x-\sigma)^{4h}f_d(A(x)) & = 2^{-h}(x-\sigma)^{4h}(A(x)^2-1)^{2h} b_d\left(\frac{2A(x)}{A(x)^2-1}\right)\\
& =2^{-h}(x-\sigma)^{4h} \left(\frac{-2 \sigma(x^2-2x-1)}{(x-\sigma)^2}\right)^{2h} b_d\left(-\frac{x^2 + 2x - 1}{x^2 - 2x - 1}\right)\\
& = 2^{h} \sigma^{2h} (x^2 -2x-1)^{2h} b_d\left(\frac{P(x)+1}{P(x)-1}\right),
\end{align*}
where
$$P(x) = \frac{2x}{x^2-1}$$
and
$$\frac{P(x)+1}{P(x)-1} = -\frac{x^2 + 2x - 1}{x^2 - 2x - 1} = R(x).$$
We also know from \cite{lm} that the map $x \rightarrow \frac{x+1}{x-1}$ permutes the roots of $b_d(x)$ and
$$(x-1)^{2h}b_d\left(\frac{x+1}{x-1}\right) = 2^h b_d(x).$$
This gives that
$$b_d\left(\frac{P(x)+1}{P(x)-1}\right) = (P(x)-1)^{-2h} 2^h b_d(P(x))$$
and therefore that
\begin{align*}
(x-\sigma)^{4h}f_d(A(x)) & = 2^{h} \sigma^{2h} (x^2 -2x-1)^{2h} (P(x)-1)^{-2h} 2^h b_d(P(x))\\
& = 2^{2h} \sigma^{2h} (x^2 -2x-1)^{2h} \left(\frac{x^2-1}{x^2-2x-1}\right)^{2h} b_d(P(x))\\
& = 2^{3h} \sigma^{2h} 2^{-h} (x^2-1)^{2h} b_d(P(x))\\
& = 2^{3h} \sigma^{2h} f_d(x).
\end{align*}
\end{proof} 

We also check that
\begin{align*}
x^{4h} f_d\left(\frac{-1}{x}\right) & = 2^{-h} x^{4h} \left(\frac{1}{x^2}-1\right)^{2h(-d)}b_d(P(-1/x))\\
& = 2^{-h} (x^2-1)^{2h} b_d(P(x)) = f_d(x).
\end{align*}
We conclude the following.  Recall the definition of $\bar A(x)$ from (\ref{eqn:3.3}).

\begin{prop} If $c$ is odd, the mappings in the group
$$\tilde H_{1} = \{x, A(x), \bar A(x), -1/x\}$$
permute the roots of $f_d(x)$.
\label{prop:11}
\end{prop}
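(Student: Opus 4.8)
The plan is to exhibit $\tilde H_1$ as a group generated by the two maps $A(x)$ and $\rho(x) = -1/x$, each of which has already been shown to permute the roots of $f_d(x)$, and then to deduce that every element of $\tilde H_1$ does the same.

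First I would note that the identity map $x$ trivially permutes the roots, and that the preceding proposition states precisely that (since $c$ is odd) the map $A$ fixes the set of conjugates of $v(w/8)$, i.e. $A$ permutes the roots of $f_d$. Next, the displayed identity $x^{4h} f_d(-1/x) = f_d(x)$ --- which rests on the equality $P(-1/x) = P(x)$ for $P(x) = 2x/(x^2-1)$, valid again because $c$ is odd --- shows that $\rho$ carries each nonzero root of $f_d$ to another root; since $v(w/8)$ is a unit, $0$ is not a root, and as $\rho$ is injective on $\mathbb{P}^1$ it maps the finite root set bijectively onto itself. Thus both $A$ and $\rho$ permute the roots.

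The key step is to recognize $\bar A$ as a composition of these two generators. With $\sigma = -1+\sqrt{2}$ a direct computation gives
$$A\!\left(\frac{-1}{x}\right) = \frac{\sigma(-1/x)+1}{-1/x-\sigma} = \frac{-\sigma+x}{-1-\sigma x} = \frac{-x+\sigma}{\sigma x+1} = \bar A(x),$$
so $\bar A = A \circ \rho$. (One checks similarly that $A$, $\bar A$ and $\rho$ each have order $2$, so that $\tilde H_1 \cong \mathbb{Z}_2 \times \mathbb{Z}_2$ is indeed generated by $A$ and $\rho$.) Since the Möbius maps that send the finite root set of $f_d$ bijectively onto itself are closed under composition, and $\bar A$ is the composite of the two root-permuting maps $A$ and $\rho$, it follows that $\bar A$ permutes the roots as well.

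Collecting these observations, all four elements $x$, $A(x)$, $\bar A(x)$, $-1/x$ of $\tilde H_1$ permute the roots of $f_d(x)$. I expect no substantial obstacle here: the only point that needs care is the identification $\bar A = A \circ \rho$ together with the remark that root-permuting Möbius maps form a group, and both are short routine verifications once the two generators have been handled by the preceding proposition and the displayed identity.
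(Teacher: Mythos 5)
Your proposal is correct and follows essentially the same route as the paper: Proposition \ref{prop:10} handles $A$, the displayed identity $x^{4h}f_d(-1/x)=f_d(x)$ handles $-1/x$, and the group structure of $\tilde H_1$ (noted in Section 3) gives $\bar A = A\circ(-1/x)$, hence closure under composition yields the result. The only cosmetic difference is that you spell out the factorization $\bar A = A\circ\rho$ and the Klein-four structure explicitly, which the paper leaves implicit.
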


Now let $c$ be even, $\delta=1+\sqrt{2}$, and $\displaystyle B(x)=\frac{\delta x+1}{x-\delta}=\frac{x+\sigma}{\sigma x-1} = -\bar A(-x)$. Then we have
\begin{align*}
(x-\delta)^{4h}f_d(B(x))&=2^{-h}(x-\delta)^{4h}(B^2(x)-1)^{2h} b_d\left(\frac{2B(x)}{1-B^2(x)}\right)\\
&=2^{-h}(x-\delta)^{4h} \left(\frac{2\delta(x^2+2x-1)}{(x-\delta)^2}\right)^{2h} b_d\left(-\frac{x^2 - 2x - 1}{x^2 + 2x - 1}\right)\\
&=2^{h}\delta^{2h}(x^2+2x-1)^{2h}\ b_d\left(\frac{\frac{2x}{1-x^2}+1}{\frac{2x}{1-x^2}-1}\right)\\
&=2^{h}\delta^{2h}(x^2+2x-1)^{2h}\cdot2^h\bigg(\frac{2x}{1-x^2}-1\bigg)^{-2h}b_d\left(\frac{2x}{1-x^2}\right)\\
&=2^{2h}\delta^{2h}(x^2+2x-1)^{2h}\cdot\left(\frac{1-x^2}{x^2+2x-1}\right)^{2h} b_d\left(\frac{2x}{1-x^2}\right)\\
&=2^{2h}\delta^{2h}\cdot(x^2-1)^{2h}b_d\left(\frac{2x}{1-x^2}\right)\\
&=2^{2h}\delta^{2h}\cdot2^hf_d(x)\\
&=2^{3h}\delta^{2h}f_d(x).
\end{align*}

Now set $\displaystyle \bar B(x) = B(-1/x) = \frac{-\sigma x+1}{x+\sigma} = -A(-x)$. \bigskip

\begin{prop} If $c$ is even, the mappings in the group
$$\tilde H_{0} = \{x, B(x), \bar B(x), -1/x\}$$
permute the roots of $f_d(x)$.
\label{prop:12}
\end{prop}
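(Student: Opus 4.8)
The plan is to reproduce, for even $c$, the argument behind Proposition~\ref{prop:11}, with $A$ replaced by $B$ and $\sigma$ by $\delta$. Since $\tilde H_0 = \{x, B(x), \bar B(x), -1/x\}$ consists of linear fractional maps, and the composite of two bijections that each permute the finite set of roots of $f_d(x)$ again permutes that set, it suffices to show that the two maps $B(x)$ and $-1/x$ permute the roots of $f_d(x)$, and then to verify that $\tilde H_0$ is closed under composition.

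For $B(x)$ I would invoke the identity
$$(x-\delta)^{4h}f_d(B(x)) = 2^{3h}\delta^{2h}f_d(x),$$
proved immediately before the statement; this is the step in which the hypothesis that $c$ is even enters, through the form of $f_d(x)$ in (\ref{eqn:6.7}). If $r$ is a root of $f_d(x)$, the right-hand side vanishes, and since $B$ is a linear fractional involution (a short computation gives $B(B(x)) = x$) its only pole is $x = \delta$. Every root of the irreducible polynomial $f_d(x)$ has degree $4h(-d) \ge 4$ over $\mathbb{Q}$, whereas $\delta = 1+\sqrt{2}$ has degree $2$, so no root equals $\delta$; hence $B(r)$ is a finite root, and $B$ permutes the roots. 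For $x \mapsto -1/x$ I would cite the identity $x^{4h}f_d(-1/x) = f_d(x)$ recorded after Proposition~\ref{prop:10}. That computation is valid for both parities of $c$, because $x \mapsto -1/x$ fixes $P(x) = \frac{2x}{x^2-1}$ and hence fixes the argument $(-1)^c\frac{2x}{1-x^2}$ of $b_d$ in (\ref{eqn:6.7}); and $0$ is not a root, since $f_d(0) = 2^{-h}b_d(0) \ne 0$, the constant term of $b_d$ being $N_{\Omega_f/\mathbb{Q}}(\pi) \ne 0$.

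It then remains to treat $\bar B(x)$ and the group law. Because $\bar B(x) = B(-1/x)$ is the composite of the two root-permuting bijections just handled, $\bar B$ permutes the roots as well. Closure of $\tilde H_0$ reduces to the single relation $B\circ(-1/x) = (-1/x)\circ B = \bar B$: a direct substitution, using $1/\delta = \sigma$, gives $\frac{-\sigma x+1}{x+\sigma} = \bar B(x)$ in either order, so $B$, $-1/x$, and $\bar B$ are pairwise commuting involutions with the product of any two equal to the third, and $\tilde H_0$ is a Klein four-group. I expect no genuine obstacle here: the one substantive computation, the $B$-identity, is already in hand, and this proposition is simply the even-$c$ analogue of Proposition~\ref{prop:11}. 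The only point requiring a little care is checking that the poles $\delta$, $-\sigma$, $0$, $\infty$ of the four maps avoid the root set, which the degree bound $4h(-d) \ge 4$ settles at once.
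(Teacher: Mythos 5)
Your proposal is correct and follows essentially the same route as the paper, which states Proposition \ref{prop:12} without a separate proof precisely because it rests on the displayed identity $(x-\delta)^{4h}f_d(B(x)) = 2^{3h}\delta^{2h}f_d(x)$ computed just before it, together with the earlier identity $x^{4h}f_d(-1/x)=f_d(x)$ and the relation $\bar B(x)=B(-1/x)$. Your additional checks (that $-1/x$ fixes the argument of $b_d$ for either parity of $c$, that the poles $\delta$, $-\sigma$, $0$, $\infty$ avoid the root set, and that $\tilde H_0$ is a Klein four-group of commuting involutions) are exactly the routine details the paper leaves implicit.
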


\section{The diophantine equation.}

From (\ref{eqn:3.4}) we know that $(X,Y) = (v(w/8),v(-1/w))$ is a solution of the diophantine equation
$$\mathcal{C}_2: \ X^2 + Y^2 = \sigma^2(1+X^2 Y^2), \ \ \  \sigma = -1+\sqrt{2}.$$
This seems to be an analogue of the equation $\mathcal{C}_5$ in \cite{m2}.  Set
$$F_2(X,Y) = X^2 + Y^2 - \sigma^2(1+X^2 Y^2).$$
Then
$$(\sigma Y+1)^2 F_2(X,\bar A(Y)) = 4\sqrt{2} \sigma^2(X^2 Y+X^2 +Y^2-Y) = 4\sqrt{2} \sigma^2 f(X,Y).$$
Since
$$\bar A(x) = \frac{-x+\sigma}{\sigma x+1} = \frac{-\delta x+1}{x+\delta}, \ \ \delta = \frac{1}{\sigma} = 1+ \sqrt{2},$$
the linear fractional map $\bar A(x)$ is the analogue of the map $T(x)$ in \cite[p. 1199]{m2}.  Considering Thm. 5.1 in \cite[p. 1205]{m2} suggests the following conjecture. \medskip

\noindent {\bf Conjecture.} {\it Assume $c$ is odd.  If $\displaystyle \tau_2 = \left(\frac{\Sigma_{\wp_2'^3}\Omega_f/K}{\wp_2}\right)$, then}
$$-v(-1/w) = \bar A(v(w/8)^{\tau_2}) = \frac{-v(w/8)^{\tau_2}+\sigma}{\sigma v(w/8)^{\tau_2}+1},$$
{\it where $w$ is given by (\ref{eqn:6.1}).}
\bigskip

To prove this conjecture, we first appeal to Proposition \ref{prop:6}, which implies that
$$\mathfrak{p}(2\tau) = \frac{1 \pm \sqrt{1-\mathfrak{p}^4(\tau)}}{\mathfrak{p}^2(\tau)}.$$
Setting $\tau=w$, (\ref{eqn:6.3}) gives that
$$\mathfrak{p}(2w) = \frac{1 \pm \sqrt{1-\pi^4}}{\pi^2} = \frac{1 \pm \xi^2}{\pi^2}.$$
Note that
$$\frac{1 + \xi^2}{\pi^2} \frac{1 - \xi^2}{\pi^2} = \frac{1 - \xi^4}{\pi^4} = 1$$
and $\displaystyle \frac{1 - \xi^2}{\pi^2} = -\pi^{\tau_2}$ from \cite[p. 333]{m}. Thus, $\displaystyle \frac{1 + \xi^2}{\pi^2} = -\pi^{-\tau_2}$. \bigskip

\begin{thm} If $w$ is given by (\ref{eqn:6.1}) we have
$$\mathfrak{p}(2w) = \frac{1 + \xi^2}{\pi^2} = \frac{-1}{\pi^{\tau_2}}.$$
\label{thm:2}
\end{thm}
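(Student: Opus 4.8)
The algebra preceding the theorem has already reduced the claim to a single point: it shows that $\mathfrak{p}(2w)$ is one of the two numbers $\frac{1\pm\xi^2}{\pi^2}$, that these two numbers are reciprocals of one another (since $(1-\xi^4)/\pi^4=1$), and, citing \cite[p.~333]{m}, that $\frac{1-\xi^2}{\pi^2}=-\pi^{\tau_2}$, whence $\frac{1+\xi^2}{\pi^2}=-\pi^{-\tau_2}=\frac{-1}{\pi^{\tau_2}}$. Thus the only thing left to establish is the \emph{choice of sign}, namely that $\mathfrak{p}(2w)=\frac{1+\xi^2}{\pi^2}$ rather than $\frac{1-\xi^2}{\pi^2}$; the second equality in the theorem then follows at once from what is already proved. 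My plan is to fix this sign by passing from the ambiguous square root $\sqrt{1-\pi^4}=\pm\xi^2$ to the single-valued function $\mathfrak{b}(\tau)$, whose $q$-expansion pins down the branch, and then to track carefully the factor $i^{-a}$ relating $\mathfrak{b}(w)$ to $\xi$.

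First I would upgrade Proposition \ref{prop:6} to a sign-free identity of holomorphic functions. Proposition \ref{prop:6} says $\mathfrak{p}(2\tau)$ is a root of $\mathfrak{p}^2(\tau)Y^2-2Y+\mathfrak{p}^2(\tau)=0$, so $\mathfrak{p}(2\tau)=\frac{1\pm\sqrt{1-\mathfrak{p}^4(\tau)}}{\mathfrak{p}^2(\tau)}$. The Weber identity $\mathfrak{f}_1^8+\mathfrak{f}_2^8=\mathfrak{f}^8$ used in the proof of Proposition \ref{prop:8} gives $1-\mathfrak{p}^4(\tau)=\tfrac1{16}\mathfrak{b}^4(\tau)=\big(\tfrac14\mathfrak{b}^2(\tau)\big)^2$, so the two roots are $\frac{1\pm\frac14\mathfrak{b}^2(\tau)}{\mathfrak{p}^2(\tau)}$. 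Comparing $q$-expansions at the cusp $\tau\to i\infty$, where $\mathfrak{p}(2\tau)=2q^{1/8}+\cdots\to0$ while $\mathfrak{b}(\tau)=2+\cdots\to2$, selects the root with the minus sign; since $\mathfrak{p}(2\tau)$, $\mathfrak{p}^2(\tau)$ and $\mathfrak{b}^2(\tau)$ are single-valued and holomorphic, analytic continuation then yields the clean identity
\begin{equation*}
\mathfrak{p}^2(\tau)\,\mathfrak{p}(2\tau)=1-\tfrac14\mathfrak{b}^2(\tau),\qquad \tau\in\mathcal{H}.
\end{equation*}
(One checks directly that $Y=\big(1-\tfrac14\mathfrak{b}^2(\tau)\big)/\mathfrak{p}^2(\tau)$ satisfies the quadratic, so only the leading $q$-terms are needed to exclude the other branch.)

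Finally I would specialize to $\tau=w$. By (\ref{eqn:6.3}) we have $\mathfrak{p}^2(w)=\pi^2$, and by (\ref{eqn:6.2}) together with $\xi=\beta/2$ we have $\mathfrak{b}(w)=2i^{a}\xi$, so $\tfrac14\mathfrak{b}^2(w)=i^{2a}\xi^2=(-1)^{a}\xi^2$. Here the congruence conditions in (\ref{eqn:6.1}) force $a$ to be \emph{odd}: since $-d\equiv1$ (mod $8$) the integer $-d$ is odd, and $a^2\equiv-d$ (mod $2^5$) then forces $a$ odd. Hence $(-1)^{a}=-1$ and $\tfrac14\mathfrak{b}^2(w)=-\xi^2$, so the displayed identity becomes $\pi^2\,\mathfrak{p}(2w)=1-(-\xi^2)=1+\xi^2$, i.e.\ $\mathfrak{p}(2w)=\frac{1+\xi^2}{\pi^2}$. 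Combined with $\frac{1+\xi^2}{\pi^2}=\frac{-1}{\pi^{\tau_2}}$ from the discussion above, this is exactly the asserted formula.

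The one genuine subtlety — and the step I would watch most closely — is this sign bookkeeping: naively one reads off $1-\xi^2$ from the ``minus branch,'' but the twist $\mathfrak{b}(w)=i^{a}\cdot2\xi$ together with the forced parity of $a$ reverses the sign of $\xi^2$ and delivers $1+\xi^2$. Pinning the analytic branch in the holomorphic identity through the cusp expansion (rather than by an \emph{ad hoc} valuation argument at $\wp_2$, which would require independent knowledge of the factorization of $\mathfrak{p}(2w)$) and then correctly carrying the factor $i^{-a}$ through to the CM value are therefore the two points on which the proof turns.
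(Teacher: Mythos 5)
Your proof is correct, but it pins down the sign by a genuinely different mechanism than the paper does. The paper works entirely at the arithmetic level: it takes the three roots of $(X-16)^3-j(w)X=0$, identifies them with $\mathfrak{f}^{24}(w)$, $-\mathfrak{f}_1^{24}(w)$, $-\mathfrak{f}_2^{24}(w)$ on one side and with explicit rational expressions in $\beta$ on the other (importing $\mathfrak{f}_2^{24}(w)=-(\beta^2+4)^4/(\beta^2(\beta^2-4)^2)$ from \cite{lm} via the unit property from \cite{yz}), then runs a two-case analysis on how the remaining two roots match up. Using the ideal factorizations $(\beta^2+4)=\wp_2^3\wp_2'^2$, $(\beta\pm2)=\wp_2^{2,3}\wp_2'$ it computes $\mathfrak{p}^{12}(2w)\cong\wp_2^{-12}$ in one case and $\cong\wp_2'^{-12}$ in the other, discards the second because $\mathfrak{p}(2w)=\frac{1\pm\xi^2}{\pi^2}$ can only involve primes above $\wp_2$, and concludes $\mathfrak{p}(2w)\cong\wp_2^{-1}$, which forces the non-integral branch $\frac{1+\xi^2}{\pi^2}=-\pi^{-\tau_2}$. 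You instead resolve the ambiguity analytically, once and for all at the level of modular functions: combining Proposition \ref{prop:6} with $1-\mathfrak{p}^4(\tau)=\mathfrak{b}^4(\tau)/16$ and selecting the branch by the $q$-expansion at $i\infty$ gives the global identity $\mathfrak{p}^2(\tau)\,\mathfrak{p}(2\tau)=1-\tfrac14\mathfrak{b}^2(\tau)$ (I checked this against the expansions through several orders; it holds, and the identity-theorem step is legitimate since both candidate branches are single-valued holomorphic functions on the connected domain $\mathcal{H}$ and $\mathfrak{p}(\tau)\neq0$ there). Specializing at $\tau=w$ with $\mathfrak{p}^2(w)=\pi^2$, $\mathfrak{b}(w)=2i^{a}\xi$ and $a$ odd then yields $\pi^2\mathfrak{p}(2w)=1+\xi^2$ directly. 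Your route buys a cleaner, case-free argument that does not need the explicit identification of $\mathfrak{f}^{24}(w)$ and $\mathfrak{f}_1^{24}(w)$ from \cite[p.~2000]{lm} nor the Yui--Zagier unit input, at the cost of a careful cusp computation and the $i^{-a}$ bookkeeping; the paper's route, by contrast, produces the ideal factorization $\mathfrak{p}(2w)\cong\wp_2^{-1}$ as a by-product, which is the form of the information it actually reuses later. The one point you take for granted, as does the paper, is the identity $\frac{1-\xi^2}{\pi^2}=-\pi^{\tau_2}$ cited from \cite[p.~333]{m} in the remarks preceding the theorem, which supplies the second equality of the statement in both arguments.
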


\begin{proof} We use an argument from \cite[Section 10]{lm}.  With the number $\beta = i^{-a}\mathfrak{b}(w)$ from (\ref{eqn:6.2}) we have \cite[eq. (8.0), p. 1980]{lm}
$$j(w) = \frac{(\beta^8 + 224\beta^4 + 256)^3}{\beta^4(\beta^4-16)^4}.$$
(See the proof of Proposition \ref{prop:9}.)  Furthermore, the roots of the equation
$$0 = (X-16)^3 - j(w)X = (X-16)^3 -  \frac{(\beta^8 + 224\beta^4 + 256)^3}{\beta^4(\beta^4-16)^4}X$$
are, on the one hand, given by the values
$$X = \mathfrak{f}^{24}(w), \ \ -\mathfrak{f}_1^{24}(w), \ \ -\mathfrak{f}_2^{24}(w);$$
(see \cite[p. 233, Th. 12.17]{co}) and on the other, are equal to the expressions
$$X = -\frac{(\beta^2-4)^4}{\beta^2(\beta^2+4)^2}, \ \ \frac{(\beta^2+4)^4}{\beta^2(\beta^2-4)^2}, \ \ -\frac{2^{12}\beta^4}{(\beta^4-16)^2}.$$
See \cite[p. 2000]{lm}.  From \cite[p. 2000]{lm} we also have (since our value $w$ satisfies the conditions for $w$ in \cite[Prop. 3.1]{lm})
\begin{equation}
\mathfrak{f}_2^{24}(w) = - \frac{(\beta^2+4)^4}{\beta^2(\beta^2-4)^2},
\label{eqn:7.1}
\end{equation}
since $\mathfrak{f}_2^{24}(w)$ must be a unit (from the results of \cite{yz}).  There are two cases to consider. \medskip

\noindent {\it Case 1.} First assume that
\begin{align}
\label{eqn:7.2} \mathfrak{f}^{24}(w) & = -\frac{(\beta^2 - 4)^4}{\beta^2(\beta^2 + 4)^2},\\
\notag \mathfrak{f}_1^{24}(w) & = 2^{12} \frac{2^{12}\beta^4}{(\beta^4 - 16)^2}.
\end{align}
In this case, (\ref{eqn:7.1}) and (\ref{eqn:7.2}) give the following formula:
$$\mathfrak{p}^{12}(2w) = \frac{\mathfrak{f}_2(w)^{24}}{\mathfrak{f}(w)^{24}} = \frac{(\beta^2 + 4)^6}{(\beta - 2)^6 (\beta + 2)^6}.$$
Now we use the following ideal factorizations in the ring class field $\Omega_f$:
\begin{equation}
(\beta^2+4) = \wp_2^3 \wp_2'^2, \ \ (\beta-2) = \wp_2^2 \wp_2', \ \ (\beta+2) = \wp_2^3 \wp_2'.
\label{eqn:7.3}
\end{equation}
See \cite[Lemma 4]{m}.  These factorizations imply that
$$\mathfrak{p}^{12}(2w) \cong \left(\frac{\wp_2^3 \wp_2'^2}{\wp_2^5 \wp_2'^2}\right)^6 = \frac{1}{\wp_2^{12}} \ \ \textrm{in} \ \Omega_f,$$
which implies that
\begin{equation}
\mathfrak{p}(2w) \cong \frac{1}{\wp_2}.
\label{eqn:7.4}
\end{equation}
By the remarks preceding the statement of the theorem, this shows that $\mathfrak{p}(2w)$ is not an algebraic integer, giving that
$\displaystyle \mathfrak{p}(2w) = \frac{1 + \xi^2}{\pi^2} = -\pi^{-\tau_2}$.
\medskip

\noindent {\it Case 2.} The alternative to (\ref{eqn:7.2}) is
\begin{align}
\label{eqn:7.5} \mathfrak{f}^{24}(w) & = - \frac{2^{12}\beta^4}{(\beta^4 - 16)^2},\\
\notag \mathfrak{f}_1^{24}(w) & = \frac{(\beta^2 - 4)^4}{\beta^2(\beta^2 + 4)^2}.
\end{align}
In this case we have
$$\mathfrak{p}^{12}(2w) = \frac{\mathfrak{f}_2(w)^{24}}{\mathfrak{f}(w)^{24}} = \left(\frac{\beta^2 + 4}{2^{2} \beta}\right)^6 \cong \left(\frac{\wp_2^3 \wp_2'^2}{\wp_2^2 \wp_2'^2 \wp_2 \wp_2'^2}\right)^6 = \frac{1}{\wp_2'^{12}},$$
giving that $\mathfrak{p}(2w) \cong \frac{1}{\wp_2'}$.  However, this is impossible, since the above remarks show that the only prime divisors occuring in the factorization of $\mathfrak{p}(2w)$ are prime divisors of $\wp_2$.  This shows that Case 2 is impossible, and Case 1 proves the formula of the theorem.
\end{proof}

Now we set
\begin{equation}
\eta = v(w/8), \ \ \lambda = -v(-1/w), \ \ \nu = v(w/4).
\label{eqn:7.6}
\end{equation}
We first show $\lambda$ is a root of the minimal polynomial $f_d(x)$ of $v(w/8)$ ($c$ odd).  We have from Proposition \ref{prop:3} that
$$\frac{2\lambda}{\lambda^2-1} = \frac{-2 \bar A(\nu)}{\bar A^2(\nu)-1} = \frac{\nu^2+2\nu-1}{\nu^2-2\nu-1}.$$
Proposition \ref{prop:5} and Theorem \ref{thm:2} give further that
\begin{equation}
\frac{2\lambda}{\lambda^2-1} = \frac{\nu-\frac{1}{\nu}+2}{\nu-\frac{1}{\nu}-2} = \frac{\frac{-2}{\mathfrak{p}(2w)}+2}{\frac{-2}{\mathfrak{p}(2w)}-2} = \frac{\pi^{\tau_2}+1}{\pi^{\tau_2}-1}.
\label{eqn:7.7}
\end{equation}
Since $\frac{\pi^{\tau_2}+1}{\pi^{\tau_2}-1}$ is a root of $b_d(x)$, we have from (\ref{eqn:6.7}) that
$$f_d(\lambda) = 2^{-h(-d)}(\lambda^2-1)^{2h(-d)}b_d\left(\frac{2\lambda}{\lambda^2-1}\right) = 0.$$
Hence, $\lambda = -v(-1/w)$ is a conjugate of $v(w/8)$.

\begin{thm} If $c$ is odd, we have the formula
$$\lambda = -v(-1/w) = \bar A(v(w/8)^{\tau_2}) = \frac{-v(w/8)^{\tau_2}+\sigma}{\sigma v(w/8)^{\tau_2}+1}, \ \ \sigma = -1+\sqrt{2},$$
where $w$ is given by (\ref{eqn:6.1}).
\label{thm:3}
\end{thm}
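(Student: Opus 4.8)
The plan is to reduce the claimed identity to an equality of values of the rational map $P(x)=\frac{2x}{x^2-1}$, and then to resolve a two-fold ambiguity. First I would record the rational-function identity
$$P(\bar A(x)) = \frac{P(x)+1}{P(x)-1} = -\frac{x^2+2x-1}{x^2-2x-1},$$
which is the same function $R(x)$ already computed in Section 6 and follows from a direct calculation using $\sigma^2-1=-2\sigma$. Applying it with $x=v(w/8)^{\tau_2}$, and using that $P$ has integer coefficients together with $P(v(w/8))=\pi$ (which is (\ref{eqn:6.4}) for $c$ odd), gives $P(v(w/8)^{\tau_2})=P(v(w/8))^{\tau_2}=\pi^{\tau_2}$, whence
$$P(\bar A(v(w/8)^{\tau_2})) = \frac{\pi^{\tau_2}+1}{\pi^{\tau_2}-1}.$$
By (\ref{eqn:7.7}) the right-hand side is exactly $P(\lambda)$, so $P(\lambda)=P(\bar A(v(w/8)^{\tau_2}))$.

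Next, since the fibers of $P$ are the pairs $\{z,-1/z\}$ and $-1/\bar A(v(w/8)^{\tau_2})=A(v(w/8)^{\tau_2})$, this equality forces
$$\lambda = \bar A(v(w/8)^{\tau_2}) \quad\text{or}\quad \lambda = A(v(w/8)^{\tau_2}).$$
Both are roots of $f_d(x)$ by Proposition \ref{prop:11}, so membership in the root set alone cannot separate them. To pin down the correct branch I would exploit the purely analytic identity $\bar A(\lambda)=v^2(w/8)/\nu$, with $\nu=v(w/4)$: indeed $v(-1/w)=\bar A(\nu)$ by Proposition \ref{prop:3}, so $\lambda=-\bar A(\nu)$, and a short computation (again with $\sigma^2-1=-2\sigma$) gives $\bar A(-\bar A(\nu))=\frac{1-\nu}{1+\nu}$, which the Remark after Proposition \ref{prop:1} rewrites as $v^2(w/8)/\nu$. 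Because $\bar A$ (and likewise $A$) is an involution, the claim $\lambda=\bar A(v(w/8)^{\tau_2})$ is therefore equivalent to
$$v(w/8)^{\tau_2} = \frac{v^2(w/8)}{v(w/4)},$$
while the spurious branch $\lambda=A(v(w/8)^{\tau_2})$ is equivalent to $v(w/8)^{\tau_2}=A(\lambda)=-v(w/4)/v^2(w/8)$; these two candidate values are negative reciprocals of one another.

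Finally I would separate the two candidates locally. Fix a prime divisor $\mathfrak q$ of $\wp_2$ in $F_1=\Sigma_{\wp_2'^3}\Omega_f$; by Theorem \ref{thm:1} $\wp_2$ is unramified in $F_1$ with $N\wp_2=2$, and $\tau_2$ is its Frobenius, so $v(w/8)^{\tau_2}\equiv v(w/8)^2 \pmod{\mathfrak q}$ (recall $v(w/8)$ is a unit). Since the residue field has characteristic $2$, the two candidate values become congruent mod $\mathfrak q$ exactly when their common square reduces to $1$, so they are separated precisely when $v^2(w/8)/v(w/4)\not\equiv 1$; matching $v(w/8)^{\tau_2}\equiv v(w/8)^2$ against them then selects $v(w/8)^{\tau_2}=v^2(w/8)/v(w/4)$, i.e. the $\bar A$-branch, once one controls $v(w/4)$ and $v(w/8)$ modulo $\mathfrak q$. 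I expect this last, local, step to be the main obstacle: because $v(w/4)\equiv 1$ forces the indeterminate form $v^2(w/8)=v(w/4)\,\frac{1-v(w/4)}{1+v(w/4)}\equiv 0/0\pmod{\mathfrak q}$, a single congruence modulo $\mathfrak q$ need not suffice, and one is pushed to the finer $2$-adic analysis — precisely the study of the function $T(x)$ and the relation $v(w/8)^{2\tau_2}=T(v(w/8)^2)$ carried out in Section 8. An attractive alternative that bypasses the fiber ambiguity entirely would be to prove $v(w/8)^{\tau_2}=v^2(w/8)/v(w/4)$ directly by Shimura reciprocity, identifying the idele representing $\tau_2$ and computing its action on the modular function $v$; the difficulty there is the explicit reciprocity computation at the prime $2$.
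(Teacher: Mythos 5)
Your opening reduction is sound: the identity $P(\bar A(x))=\frac{P(x)+1}{P(x)-1}$ together with $P(v(w/8))=\pi$ (for $c$ odd) and (\ref{eqn:7.7}) does give $P(\lambda)=P(\bar A(v(w/8)^{\tau_2}))$, and since the fibres of $P$ are the pairs $\{z,-1/z\}$ this pins $\lambda$ down to the two-element set $\{\bar A(\eta^{\tau_2}),\,A(\eta^{\tau_2})\}$. That is in fact an efficient repackaging of the paper's argument: the paper only knows at the analogous stage that $\eta\mapsto\bar A(\lambda)$ is \emph{some} automorphism $\psi$ of $F_1/K$, and must invest in the $j$-function computation to force $\psi|_{\Omega_f}=\tau_2|_{\Omega_f}$ before facing a residual two-fold ambiguity ($\psi=\tau_2$ versus $\psi=\rho\tau_2$); your fibre argument arrives at a two-fold ambiguity directly. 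But the decisive step --- selecting the branch --- is exactly where your proposal stops. You reduce everything to knowing $\nu=v(w/4)$ modulo a prime $\mathfrak{q}\mid\wp_2$, declare that "the main obstacle," and speculate that one is pushed to the $2$-adic machinery of Section 8 or to Shimura reciprocity. Neither is needed, and without this step nothing is proved: a priori $\lambda$ could equal $A(\eta^{\tau_2})$.

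The two missing ingredients are already in the paper. First, $\bar A(\lambda)=\frac{1-\nu}{1+\nu}=\eta^2/\nu$ is a unit with no $0/0$ problem (your worry about the indeterminate form is a red herring once you work with $\eta^2/\nu$ rather than $\frac{1-\nu}{1+\nu}$), and the needed congruence $\nu\equiv 1\pmod{\mathfrak{q}}$ follows from Proposition \ref{prop:5} and Theorem \ref{thm:2}: $\frac{1}{\nu}-\nu=\frac{2}{\mathfrak{p}(2w)}=-2\pi^{\tau_2}\cong\wp_2^2\wp_2'$, so $(1-\nu)(1+\nu)\equiv 0\pmod{\wp_2^2}$ forces $\nu\equiv 1\pmod{\mathfrak{q}}$. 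It is precisely the Case 1/Case 2 dichotomy of Theorem \ref{thm:2} that rules out the alternative $\frac{1}{\nu}-\nu\cong\wp_2'$, under which $\nu\not\equiv 1$ and your branch selection would fail; the paper encodes the same information in the product identity $\bigl(\bar A(\lambda)-\eta^2\bigr)\bigl(A(\lambda)-\eta^{-2}\bigr)=2\pi^{\tau_2}$. Second, you must verify that the two branches are actually separated modulo $\mathfrak{q}$, i.e.\ that $\eta^2\not\equiv\eta^{-2}$: this follows from (\ref{eqn:6.4}), since $\eta-\eta^{-1}=\pm 2/\pi\cong\wp_2'$ is prime to $\wp_2$, whence $\eta\not\equiv 1$ and $\eta^4\not\equiv 1\pmod{\mathfrak{q}}$ --- the same computation the paper uses to exclude $\psi=\rho\tau_2$. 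With these two facts your argument closes: $\eta^{\tau_2}\equiv\eta^2\equiv\bar A(\lambda)\not\equiv A(\lambda)\pmod{\mathfrak{q}}$, so $\eta^{\tau_2}=\bar A(\lambda)$ and $\lambda=\bar A(\eta^{\tau_2})$. As written, however, the proposal identifies the fork in the road but takes neither path, so it does not yet constitute a proof.
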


\begin{proof}
We will prove that $\bar A(\lambda) = v(w/8)^{\tau_2} = \eta^{\tau_2}$ by showing that
$$\bar A(\lambda) - \eta^2 \equiv 0 \ (\textrm{mod} \ \wp_2).$$
We have $\eta^2 + \lambda^2 = \sigma^2(1+\eta^2 \lambda^2)$, which implies that
\begin{align*}
\bar A(\lambda) - \eta^2 & = \frac{-\lambda+\sigma}{\sigma \lambda+1} - \frac{-\lambda^2+\sigma^2}{1-\sigma^2 \lambda^2} = \frac{-\lambda+\sigma}{\sigma \lambda+1} + \frac{\sigma^2-\lambda^2}{\sigma^2 \lambda^2-1}\\
& = \frac{(-\lambda+\sigma)(\sigma \lambda - 1) + \sigma^2-\lambda^2}{\sigma^2 \lambda^2-1}\\
& = \frac{-(\sigma+1)\lambda^2+(\sigma^2+1)\lambda+\sigma^2-\sigma}{\sigma^2 \lambda^2-1}\\
& = \frac{-\sqrt{2} \lambda^2+(4-2\sqrt{2})\lambda+4-3\sqrt{2}}{(\sigma \lambda+1)(\sigma \lambda-1)}\\
& = \frac{-\sqrt{2} (\lambda-\sigma)^2}{\sigma^2(\lambda-\bar \sigma)(\lambda+\bar \sigma)}.
\end{align*}
We multiply the last expression by
$$A(\lambda) - \frac {1}{\eta^2} = \frac{(-4+3\sqrt{2})(\lambda-\bar \sigma)^2}{\lambda^2-\sigma^2}=\frac{\sqrt{2}\sigma^2(\lambda-\bar \sigma)^2}{\lambda^2-\sigma^2},$$
which is obtained from the last calculation by fixing $\lambda$ and mapping $\sqrt{2}$ to $-\sqrt{2}$.  This yields the formula
\begin{equation}
\left(\bar A(\lambda) - \eta^2\right) \left(A(\lambda) - \frac {1}{\eta^2}\right) = \frac{-2(\lambda-\sigma)(\lambda-\bar \sigma)}{(\lambda+\sigma)(\lambda+ \bar \sigma)} = -2\frac{\lambda^2+2\lambda-1}{\lambda^2-2\lambda-1}.
\label{eqn:7.8}
\end{equation}
Now
\begin{equation}
\frac{\lambda^2+2\lambda-1}{\lambda^2-2\lambda-1} = \frac{1+\frac{2\lambda}{\lambda^2-1}}{1-\frac{2\lambda}{\lambda^2-1}},
\label{eqn:7.9}
\end{equation}
where
\begin{equation*}
\frac{2\lambda}{\lambda^2-1} = \frac{\pi^{\tau_2}+1}{\pi^{\tau_2}-1}
\end{equation*}
from (\ref{eqn:7.7}).  It follows from (\ref{eqn:7.9}) that
$$\frac{\lambda^2+2\lambda-1}{\lambda^2-2\lambda-1} = \frac{1+\frac{\pi^{\tau_2}+1}{\pi^{\tau_2}-1}}{1-\frac{\pi^{\tau_2}+1}{\pi^{\tau_2}-1}} = -\pi^{\tau_2}.$$
Thus, (\ref{eqn:7.8}) becomes
$$\left(\bar A(\lambda) - \eta^2\right) \left(A(\lambda) - \frac {1}{\eta^2}\right) = 2\pi^{\tau_2}$$
and therefore $(\pi^{\tau_2}) = (\pi) = \wp_2$ yields that
$$\left(\bar A(\lambda) - \eta^2\right) \left(A(\lambda) - \frac {1}{\eta^2}\right) \equiv 0 \ (\textrm{mod} \ \wp_2^2).$$
It follows that
\begin{equation}
\bar A(\lambda) \equiv \eta^2 \ \ \textrm{or} \ \ A(\lambda) \equiv \frac {1}{\eta^2} \ (\textrm{mod} \ \mathfrak{q}),
\label{eqn:7.10}
\end{equation}
for each prime divisor $\mathfrak{q}$ of $\wp_2$ in $F_1 = \mathbb{Q}(\eta)$.  But $A(\lambda) = -1/\bar A(\lambda)$ and $\eta$ are units, so the second congruence in (\ref{eqn:7.10}) implies the first.  This proves that
\begin{equation}
\bar A(\lambda) \equiv \eta^2 \ (\textrm{mod} \ \wp_2)
\label{eqn:7.11}
\end{equation}
in $F_1$.  Note that $\bar A(\lambda)$ and $\lambda = -v(-1/w)$ are roots of $f_d(x)$ (Proposition \ref{prop:11}),  so $F_2 = \mathbb{Q}(\lambda)$ is isomorphic to $F_1 = \mathbb{Q}(\eta) = \mathbb{Q}(v(w/8))$.  However, by (\ref{eqn:3.4}),
$$\lambda^2 = v^2(-1/w) = \frac{-v(w/8)^2+\sigma^2}{1-\sigma^2 v(w/8)^2}$$
does not lie in $F_1$, since $\sqrt{2} \notin F_1$ (otherwise $\wp_2$ would be ramified in $F_1$;  note that $v(w/8)$ is not a fourth root of unity, so the determinant of the linear fractional transformation in $\sigma^2$ is nonzero).  It follows that from Theorem \ref{thm:1} that
$$F_2 = \mathbb{Q}(\lambda) = \Sigma_{\wp_2^3} \Omega_f.$$
The same argument now shows that $\bar A(\lambda) = \frac{-\lambda+\sigma}{\sigma \lambda+1} \notin F_2$, so $\bar A(\lambda) \in F_1$.  Therefore, $\psi: \eta \rightarrow \bar A(\lambda)$ is an automorphism of $F_1$, and since $\wp_2$ is not ramified in $F_1$ but $\wp_2'$ is, it follows that $\psi$ fixes $\wp_2$, implying that it fixes the field $K$. \medskip

Recalling the rational function $j_2(x)$ from (\ref{eqn:5.6}), a computation on Maple shows that
$$j_2\left(\left(\frac{1-\nu}{1+\nu}\right)^2\right) = j_2(\nu^2) = j_2(v^2(w/4)) = j(w/4),$$
by (\ref{eqn:5.7}).  Now Proposition \ref{prop:3} and the fact that $\bar A(x)$ has order $2$ imply that
$v(w/4) = \bar A(v(-1/w))$ and
\begin{align}
\notag \frac{1-v(w/4)}{1+v(w/4)} & = \frac{1-\bar A(v(-1/w))}{1+\bar A(v(-1/w))}\\
\notag & = \frac{v(-1/w)+\sigma}{-\sigma v(-1/w)+1}\\
\label{eqn:7.12} & = \bar A(-v(-1/w)) = \bar A(\lambda).
\end{align}
This implies that
$$j_2(\bar A(\lambda)^2) = j_2\left(\left(\frac{1-\nu}{1+\nu}\right)^2\right) = j(w/4).$$
On the other hand, equation (\ref{eqn:5.7}) gives
$$j(w/8)^\psi = j_2(\eta^{2\psi}) = j_2(\bar A(\lambda)^2) = j(w/4) = j(w/8)^{\tau_2}.$$
Hence $\psi|_{\Omega_f} = \tau_2|_{\Omega_f}$.  It follows that $\psi = \tau_2$ or $\psi = \rho \tau_2$, where $\rho: \eta \rightarrow -1/\eta$ is the nontrivial automorphism of $F_1/\Omega_f$.  Now, if $\psi = \rho \tau_2$, then by (\ref{eqn:7.11})
$$\eta^\psi = \bar A(\lambda) \equiv \eta^2 \ (\textrm{mod} \ \wp_2)$$
and $\eta^{\tau_2} \equiv \eta^2$ (mod $\wp_2$) imply that
$$\eta^2 \equiv \eta^{\rho \tau_2} = \frac{-1}{\eta^{\tau_2}} \equiv \frac{1}{\eta^2} \ (\textrm{mod} \ \wp_2).$$
It follows from this congruence that $\eta^4+1 \equiv (\eta+1)^4 \equiv 0$ mod $\wp_2$ and hence $\eta \equiv 1$ (mod $\wp_2$), since $\wp_2$ is unramified in $F_1/K$.  This implies in turn that $z = \eta-\eta^{-1} \equiv 0$ (mod $\wp_2$).  But this contradicts (\ref{eqn:4.3}) (with $\tau = w/8$) and (\ref{eqn:6.3}), according to which $z = 2/\pi$ is relatively prime to $\wp_2$.  Hence, $\psi = \tau_2$ must be the Artin symbol for $\wp_2$ in $F_1/K$.  This completes the proof.
\end{proof}

\noindent {\bf Corollary.} {\it Assume $c$ is odd.  If $\displaystyle \tau_2 = \left(\frac{\Sigma_{\wp_2'^3}\Omega_f/K}{\wp_2}\right)$, then
$$v(w/8)^{\tau_2} = \frac{1-v(w/4)}{1+v(w/4)}$$
and}
$$f(v(w/8),v(w/8)^{\tau_2}) = 0.$$

\begin{proof} The first formula is immediate from $\eta^\psi = \eta^{\tau_2} = \bar A(\lambda)$ and (\ref{eqn:7.12}).  The second follows from Proposition \ref{prop:1} and
$$f(v(w/8),v(w/4)) = 0 = f\left(v(w/8),\frac{1-v(w/4)}{1+v(w/4)}\right),$$
since
$$f\left(x, \frac{1-y}{1+y}\right) = \frac{2f(x,y)}{(1+y)^2}.$$
\end{proof}

\begin{thm} If $c$ is even, then
$$v(w/8)^{\tau_2}=\frac{v(w/4)-1}{v(w/4)+1}$$ and
$$v(-1/w)=B(v(w/8)^{\tau_2})=\frac{v(w/8)^{\tau_2}+\sigma}{\sigma v(w/8)^{\tau_2}-1}.$$
\label{thm:4}
\end{thm}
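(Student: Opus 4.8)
The plan is to follow the proof of Theorem~\ref{thm:3} line for line, replacing the involution $\bar A$ by $B$, the map $A$ by $\bar B(x) = B(-1/x) = -1/B(x)$, and Proposition~\ref{prop:11} by Proposition~\ref{prop:12}, while tracking the sign changes that arise because $c$ is even. Throughout set $\eta = v(w/8)$, $\nu = v(w/4)$, and $\mu = v(-1/w) = -\lambda$. Then $\mu$ is a root of the minimal polynomial $f_d(x)$ of $v(w/8)$: relation (\ref{eqn:7.7}) rests only on Propositions~\ref{prop:3}, \ref{prop:5} and Theorem~\ref{thm:2}, hence holds regardless of the parity of $c$, giving $\frac{2\mu}{1-\mu^2} = \frac{\pi^{\tau_2}+1}{\pi^{\tau_2}-1}$, which is a root of $b_d(x)$; since now $(-1)^c = 1$, formula (\ref{eqn:6.7}) reads $f_d(x) = 2^{-h}(x^2-1)^{2h}b_d\!\left(\frac{2x}{1-x^2}\right)$, so $f_d(\mu) = 0$. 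Because $B$ is an involution, proving the theorem amounts to showing $B(\mu) = \eta^{\tau_2}$.

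First I would establish the congruence $B(\mu) \equiv \eta^2 \pmod{\wp_2}$. Using the relation $\eta^2+\mu^2 = \sigma^2(1+\eta^2\mu^2)$ coming from $\mathcal{C}_2$ to write $\eta^2 = \frac{\sigma^2-\mu^2}{1-\sigma^2\mu^2}$, direct calculation gives
$$B(\mu)-\eta^2 = \frac{-\sqrt 2\,(\mu+\sigma)}{\sigma(\mu-\bar\sigma)}, \qquad \bar B(\mu)-\frac{1}{\eta^2} = \frac{-\sqrt 2\,\sigma(\mu+\bar\sigma)}{\mu-\sigma},$$
where $\bar\sigma = -1-\sqrt 2$ and the second expression is the image of the first under $\sqrt 2\mapsto -\sqrt 2$. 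Multiplying these and using $\frac{\mu^2-2\mu-1}{\mu^2+2\mu-1} = -\pi^{\tau_2}$, which is exactly (\ref{eqn:7.9}) written with $\mu = -\lambda$, yields
$$\left(B(\mu)-\eta^2\right)\left(\bar B(\mu)-\frac{1}{\eta^2}\right) = \frac{2(\mu^2-2\mu-1)}{\mu^2+2\mu-1} = -2\pi^{\tau_2}.$$
Since $(\pi^{\tau_2}) = \wp_2$, the right-hand side is divisible by $\wp_2^2$, so at each prime divisor $\mathfrak q$ of $\wp_2$ one of the two factors vanishes mod $\mathfrak q$. As $\bar B(\mu) = -1/B(\mu)$, $\eta$ is a unit, and $2\equiv 0 \pmod{\wp_2}$, the congruence $\bar B(\mu) \equiv 1/\eta^2$ forces $B(\mu) \equiv -\eta^2 \equiv \eta^2$; hence in all cases $B(\mu) \equiv \eta^2 \pmod{\wp_2}$.

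Next I would run the Galois argument of Theorem~\ref{thm:3} unchanged. By Proposition~\ref{prop:12}, $\mu$ and $B(\mu)$ are roots of $f_d$, so $F_2 = \mathbb{Q}(\mu) = \mathbb{Q}(v(-1/w)) \cong F_1$; since $\mu^2 = \frac{\sigma^2-\eta^2}{1-\sigma^2\eta^2}$ involves $\sqrt 2 \notin F_1$, we get $F_2 = \Sigma_{\wp_2^3}\Omega_f$ and $B(\mu)\in F_1$, so $\psi\colon \eta\mapsto B(\mu)$ is an automorphism of $F_1$ fixing $K$. A short linear-fractional computation from $\nu = \bar A(\mu)$ gives the clean identity $B(\mu) = \frac{\nu-1}{\nu+1}$, whence $B(\mu)^2 = \left(\frac{1-\nu}{1+\nu}\right)^2$ and, by (\ref{eqn:5.7}) and the Maple identity used in Theorem~\ref{thm:3},
$$j(w/8)^\psi = j_2\!\left(B(\mu)^2\right) = j_2\!\left(\left(\tfrac{1-\nu}{1+\nu}\right)^2\right) = j_2(\nu^2) = j(w/4) = j(w/8)^{\tau_2}.$$
Thus $\psi|_{\Omega_f} = \tau_2|_{\Omega_f}$, so $\psi = \tau_2$ or $\psi = \rho\tau_2$. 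The congruence $B(\mu)\equiv \eta^2$ rules out the second possibility exactly as before: it would give $\eta^2 \equiv \eta^{\rho\tau_2} = -1/\eta^{\tau_2}\equiv 1/\eta^2$, hence $\eta^4+1\equiv(\eta+1)^4\equiv 0$ and $\eta\equiv 1\pmod{\wp_2}$, making $z = \eta-\eta^{-1}\equiv 0\pmod{\wp_2}$, contrary to $(z) = \wp_2'$ given by (\ref{eqn:6.4}). Therefore $\psi=\tau_2$, i.e. $v(w/8)^{\tau_2} = B(\mu) = \frac{v(w/4)-1}{v(w/4)+1}$; applying the involution $B$ then gives $v(-1/w) = \mu = B(v(w/8)^{\tau_2})$, as claimed.

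I expect the only genuine obstacle to be computational: correctly factoring $B(\mu)-\eta^2$ and $\bar B(\mu)-1/\eta^2$ and confirming that their product collapses to $-2\pi^{\tau_2}$, the sign opposite to the $+2\pi^{\tau_2}$ of Theorem~\ref{thm:3}. This sign flip is harmless, since only divisibility by $\wp_2^2$ is used; every subsequent step is structurally identical to the odd-$c$ case.
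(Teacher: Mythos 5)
Your proposal is correct and follows essentially the same route as the paper: the paper's own proof establishes the equivalence of the two displayed formulas via $(\ref{eqn:7.12})$, verifies $f_d(\tilde\lambda)=0$ using $(\ref{eqn:7.7})$, computes the product $\bigl(B(\tilde\lambda)-\eta^2\bigr)\bigl(\bar B(\tilde\lambda)-\eta^{-2}\bigr)=-2\pi^{\tau_2}$ exactly as you do, and then simply cites ``a similar argument to the end of the proof of Theorem \ref{thm:3}.'' Your write-up usefully makes explicit the congruence, Galois, and $j_2$ steps that the paper leaves to that citation, and all of your intermediate identities (including the sign flip to $-2\pi^{\tau_2}$ and the identity $B(\mu)=\frac{\nu-1}{\nu+1}$) check out.
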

\begin{proof}
From Proposition \ref{prop:3}, we have that
$$v(-1/w)=\bar A(v(w/4))=-B(-v(w/4)),$$
where
$$\displaystyle B(x)=\frac{x+\sigma}{\sigma x-1}=-\frac{-(-x)+\sigma}{\sigma(-x)+1}=-\bar A(-x).$$
Hence, according to (\ref{eqn:7.12}), we obtain
$$v(w/8)^{\tau_2}=\frac{v(w/4)-1}{v(w/4)+1}=B(v(-1/w))\ \ \iff\ \ v(-1/w)=B(v(w/8)^{\tau_2}),$$
showing that both the statements in the theorem are equivalent. We now show that Proposition \ref{prop:12} implies that $v(w/8)$ and $v(-1/w)$ are conjugate algebraic integers.\medskip

In similar fashion to (\ref{eqn:7.6}), we set
$$\eta=v(w/8),\ \ \tilde \lambda=v(-1/w) = -\lambda,\ \ \nu=v(w/4).$$
Then, according to (\ref{eqn:7.7}), we get
$$\frac{2 \tilde \lambda}{1-\tilde \lambda^2}=-\frac{2-(\frac{1}{\nu}-\nu)}{2+(\frac{1}{\nu}-\nu)}=-\frac{2-\frac{2}{\mathfrak{p}(2w)}}{2+\frac{2}{\mathfrak{p}(2w)}}=-\frac{1+\pi^{\tau_2}}{1-\pi^{\tau_2}}=\frac{\pi^{\tau_2}+1}{\pi^{\tau_2}-1}.$$
Since $\frac{\pi^{\tau_2}+1}{\pi^{\tau_2}-1}$ is a root of $b_d(x)$, we have that
$$f_d(\tilde \lambda)=2^{-h}(\tilde \lambda^2-1)^{2h}b_d\left(\frac{2\tilde \lambda}{1-\tilde \lambda^2}\right)=0,$$
showing that $\tilde \lambda = v(-1/w)$ is a conjugate of $\eta = v(w/8)$.\medskip

Now,
\begin{align*}
B(\tilde \lambda)-\eta^2&=\frac{\tilde \lambda+\sigma}{\sigma \tilde \lambda-1}-\frac{\sigma^2-\tilde \lambda^2}{1-\sigma^2 \tilde \lambda^2} = \frac{\lambda-\sigma}{\sigma \lambda+1}-\frac{\sigma^2- \lambda^2}{1-\sigma^2 \lambda^2}\\
&=\frac{(\lambda-\sigma)(\sigma \lambda-1)+(\sigma^2- \lambda^2)}{\sigma^2 \lambda^2-1}\\
&=\frac{(\sigma-1) \lambda^2-(\sigma^2+1) \lambda+(\sigma^2+\sigma)}{(\sigma \lambda+1)(\sigma \lambda-1)}\\
&=\frac{-\sqrt{2}\,\sigma(\lambda^2+ 2\lambda-1)}{\sigma^2(\lambda-\bar\sigma)(\lambda+\bar\sigma)}\\
&=\frac{-\sqrt{2}\,\sigma(\lambda-\sigma)(\lambda-\bar\sigma)}{\sigma^2(\lambda-\bar\sigma)(\lambda+\bar\sigma)}\\
&=\frac{\sqrt{2}\,\bar\sigma(\tilde \lambda+\sigma)}{(\tilde \lambda-\bar\sigma)}.
\end{align*}
In the above calculation, mapping $\sqrt{2}$ to $-\sqrt{2}$, while fixing $\tilde \lambda$, gives us
$$\bar B(\tilde \lambda)-\frac{1}{\eta^2}=-\frac{\sqrt{2}\,\sigma(\tilde \lambda+\bar\sigma)}{(\tilde \lambda-\sigma)}.$$
Multiplying the above two expressions gives us
\begin{align*}
\big(B(\tilde \lambda)-\eta^2\big)\left(\bar B(\tilde \lambda)-\frac{1}{\eta^2}\right)&=2\,\frac{(\tilde \lambda+\sigma)(\tilde \lambda+\bar\sigma)}{(\tilde \lambda-\sigma)(\tilde \lambda-\bar\sigma)} =2 \frac{\tilde \lambda^2-2\tilde \lambda-1}{\tilde \lambda^2+2\tilde \lambda-1}\\
&=2\frac{1+\big(\frac{2\tilde \lambda}{1-\tilde \lambda^2}\big)}{1-\big(\frac{2\tilde \lambda}{1-\tilde \lambda^2}\big)}=2\frac{1+\big(\frac{\pi^{\tau_2}+1}{\pi^{\tau_2}-1}\big)}{1-\big(\frac{\pi^{\tau_2}+1}{\pi^{\tau_2}-1}\big)} =-2 \pi^{\tau_2}.
\end{align*}
Now a similar argument to the end of the proof of Theorem \ref{thm:3} applies here and shows that the automorphism $\psi$ on $F_1$ taking $\eta$ to $\tilde \lambda$ is $\eta^\psi = B(\tilde \lambda)$.  As before, $\psi$ coincides with $\tau_2$, giving that $\tilde \lambda = v(-1/w) = B(\eta^{\tau_2}) = B(v(w/8)^{\tau_2})$.  Also see the argument below.  \end{proof}

\noindent {\bf Corollary 1.} {If $c$ is even, the point $(x,y) = (-\eta,-\eta^{\tau_2})$ lies on the curve $f(x, y) = 0$:}
$$f(-v(w/8),-v(w/8)^{\tau_2})=0,\ \ \tau_2=\left(\frac{\Sigma_{\wp_2'^3}\Omega_f/K}{\wp_2}\right).$$

\begin{proof} We have
\begin{align*}
0 &= f(v(w/8),v(w/4)) = f\left(v(w/8),-\frac{v(w/4)-1}{v(w/4)+1}\right)\\
& = f(v(w/8),-v(w/8)^{\tau_2}) = f(-v(w/8),-v(w/8)^{\tau_2}).
\end{align*}
\end{proof}

\noindent {\bf Corollary 2.} {\it The field $\mathbb{Q}(v(-1/w)) = \Sigma_{\wp_2^3}\Omega_f$ is the inertia field for the prime ideal $\wp_2'$ in the extension $L_{\mathcal{O},8}/K$.} \medskip

We also give an alternate argument to show $\psi = \tau_2$ in the proofs of Theorems \ref{thm:3} and \ref{thm:4}.  We first note that the modular function $j(\tau)$ can be expressed in terms of $z = v(\tau)-\frac{1}{v(\tau)}$, namely
$$j(\tau) = J(z) = \frac{(z^8 + 240z^6 + 2144z^4 + 3840z^2 + 256)^3}{z^2(z^2 + 4)^2 (z - 2)^8 (z + 2)^8},$$
using Proposition \ref{prop:9}.  Now set $z = \eta - \frac{1}{\eta} = \pm \frac{2}{\pi}$, so that $(z, \wp_2)=1$.  This allows us to reduce the above formula modulo $\wp_2$, giving that
$$j(w/8) \equiv \frac{z^{24}}{z^{22}} \equiv z^2 \ (\textrm{mod} \ \wp_2).$$
This shows that $j(w/8)^\tau$ is conjugate to $z^\tau$ modulo each prime divisor $\mathfrak{p}$ of $\wp_2$ in $\Omega_f$, for each automorphism $\tau \in \textrm{Gal}(\Omega_f/K)$; and this implies that the class equation $H_{-d}(X)$ and the minimal polynomial $\mu_d(X)$ of $z$ over $K$ are congruent:
$$H_{-d}(X) \equiv \mu_d(X) \ (\textrm{mod} \ \wp_2).$$
A theorem of Deuring says that the discriminant of $H_{-d}(X)$ is odd (since $\displaystyle \left(\frac{-d}{2}\right) = +1$), so the discriminant of $\mu_d(X)$ is not divisible by $\wp_2$.  This implies that the discriminant of the minimal polynomial
$\tilde \mu_d(X) = X^{h(-d)}\mu_d\left(X-\frac{1}{X}\right)$ of $\eta$ over $K$ is relatively prime to $\wp_2$, as well.  This is because
$$\mu_d(X) = \prod_{i=1}^{h(-d)}{(X-(\eta_i-\frac{1}{\eta_i}))}$$
is a product over the conjugates $z_i = \eta_i-\frac{1}{\eta_i}$ of $z$, so that
\begin{align*}
X^{h(-d)} \mu_d\left(X-\frac{1}{X}\right) &= \prod_{i=1}^{h(-d)}{(X^2-(\eta_i-\frac{1}{\eta_i})X-1)},\\
& = \prod_{i=1}^{h(-d)}{(X^2-z_i X-1)}, \ \ z_i = \eta_i-\frac{1}{\eta_i}.
\end{align*}
Hence,
\begin{align*}
\textrm{disc}(\tilde \mu_d(X)) & = \prod_{i=1}^{h(-d)}{(z_i^2+4)} \ \prod_{i < j}{\textrm{Res}(X^2-z_i X-1,X^2-z_j X-1)^2}\\
& = \prod_{i=1}^{h(-d)}{(z_i^2+4)} \ \prod_{i < j}{(z_i-z_j)^4}\\
& = \prod_{i=1}^{h(-d)}{(z_i^2+4)} \ (\textrm{disc}(\mu_d(X)))^2.
\end{align*}
Now the $z_i$ are conjugate over $K$, so each $z_i$ is relatively prime to $\wp_2$, which implies that $(z_i^2+4, \wp_2) = 1$ for each $i$.  This proves the claim that $(\textrm{disc}(\tilde \mu_d(X)), \wp_2) = 1$.  This proves

\begin{thm} Let $\textsf{R}_{\wp_2}$ denote the ring of elements of $K$ which are integral for $\wp_2$.  Then the powers of $\eta = v(w/8)$ form a basis over $\textsf{R}_{\wp_2}$ for the ring $\overline{\textsf{R}}$ of elements of $F_1 = \mathbb{Q}(\eta)$ which are integral for $\wp_2$.
\label{thm:5}
\end{thm}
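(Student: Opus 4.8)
The plan is to deduce the theorem from the standard criterion that a monogenic order whose discriminant is prime to a given prime is already the maximal order at that prime. First I would record the local setup. The ring $\textsf{R}_{\wp_2}$ is the localization of the Dedekind domain $R_K$ at the prime $\wp_2$, hence a discrete valuation ring with fraction field $K$; and $\overline{\textsf{R}}$, the ring of elements of $F_1$ integral for $\wp_2$, is precisely the integral closure of $\textsf{R}_{\wp_2}$ in $F_1$, namely the intersection $\bigcap_{\mathfrak{q} \mid \wp_2} O_\mathfrak{q}$ of the valuation rings attached to the prime divisors $\mathfrak{q}$ of $\wp_2$ in $F_1$. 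Since $\eta = v(w/8)$ is a unit, in particular an algebraic integer, and generates $F_1 = K(\eta)$ over $K$, its minimal polynomial $\tilde \mu_d(X)$ over $K$ lies in $R_K[X] \subset \textsf{R}_{\wp_2}[X]$ and has degree $2h(-d) = [F_1:K]$. Thus the powers $1, \eta, \dots, \eta^{2h(-d)-1}$ form a $K$-basis of $F_1$ and generate the order $\textsf{R}_{\wp_2}[\eta] \subseteq \overline{\textsf{R}}$.

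Next I would invoke the discriminant relation. Because $\textsf{R}_{\wp_2}$ is a principal ideal domain and $F_1/K$ is separable, $\overline{\textsf{R}}$ is a free $\textsf{R}_{\wp_2}$-module of rank $2h(-d)$ containing $\textsf{R}_{\wp_2}[\eta]$ with finite index, and the discriminants satisfy
$$\textrm{disc}(\tilde \mu_d(X)) = \textrm{disc}(\textsf{R}_{\wp_2}[\eta]/\textsf{R}_{\wp_2}) = [\overline{\textsf{R}} : \textsf{R}_{\wp_2}[\eta]]^2 \, \textrm{disc}(\overline{\textsf{R}}/\textsf{R}_{\wp_2})$$
as ideals of $\textsf{R}_{\wp_2}$. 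The computation carried out just above the statement shows that $(\textrm{disc}(\tilde \mu_d(X)), \wp_2) = 1$, so $\textrm{disc}(\tilde \mu_d(X))$ is a unit in the local ring $\textsf{R}_{\wp_2}$; consequently the index $[\overline{\textsf{R}} : \textsf{R}_{\wp_2}[\eta]]$ is a unit as well. This forces $\textsf{R}_{\wp_2}[\eta] = \overline{\textsf{R}}$, which is exactly the assertion that the powers of $\eta$ form an $\textsf{R}_{\wp_2}$-basis for $\overline{\textsf{R}}$.

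The substantive work, establishing that the discriminant of $\tilde \mu_d(X)$ is prime to $\wp_2$, has already been completed in the preceding paragraph, resting on Deuring's theorem that $\textrm{disc}(H_{-d}(X))$ is odd together with the factorization $\textrm{disc}(\tilde \mu_d) = \prod_i (z_i^2 + 4)\cdot \textrm{disc}(\mu_d)^2$. I therefore expect no genuine obstacle in the theorem itself; the only points requiring care are formal ones: confirming that ``integral for $\wp_2$'' coincides with integrality over the discrete valuation ring $\textsf{R}_{\wp_2}$, so that the standard local criterion applies, and that $\tilde \mu_d(X)$, being the minimal polynomial of an algebraic integer over $K$, indeed has coefficients in $R_K$ and hence in $\textsf{R}_{\wp_2}$.
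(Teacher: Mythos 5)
Your proposal is correct and follows essentially the same route as the paper: the entire content of the theorem is the computation, carried out in the paragraph preceding its statement, that $(\textrm{disc}(\tilde\mu_d(X)),\wp_2)=1$, after which the standard local criterion (discriminant of a generator prime to $\wp_2$ forces $\textsf{R}_{\wp_2}[\eta]$ to equal the integral closure) finishes the argument. The paper leaves that last commutative-algebra step implicit, and you have simply written it out via the index--discriminant relation, which is exactly the intended justification.
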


Given this theorem, the congruence
$$\eta^\psi \equiv \eta^2 \ (\textrm{mod} \ \wp_2)$$
implies that
$$\alpha^\psi \equiv \alpha^2 \ (\textrm{mod} \ \wp_2),$$
for all $\alpha \in F_1$ which are integral for $\wp_2$.  Since $F_1/K$ is abelian and $\wp_2$ is unramified in this extension, this implies by definition of the Artin symbol that $\psi = \tau_2$.

\section{Values of $v(\tau)$ as periodic points.}

We now define the following algebraic functions.  The roots of $f(x,y) = y^2+(x^2-1)y+x^2$ (see Proposition \ref{prop:1}) as a function of $y$ are 
\begin{equation}
\hat F(x) = -\frac{x^2-1}{2} \pm \frac{1}{2} \sqrt{x^4 - 6x^2 + 1}.
\label{eqn:8.1}
\end{equation}
Also, the roots of $g(x,y) = y^2-(x^2-4x+1)y+x^2$ (see Proposition \ref{prop:2}) are given by
\begin{align}
\notag \hat T(x) &= \frac{1}{2}(x^2-4x+1) \pm \frac{1}{2} \sqrt{(x^2-2x+1)(x^2-6x+1)}\\
\label{eqn:8.2} & = \frac{1}{2}(x^2-4x+1) \pm \frac{x-1}{2} \sqrt{x^2-6x+1}.
\end{align}

We now prove the following.

\begin{thm} If $w \in R_K$ is the algebraic integer defined by
$$w = \frac{a+\sqrt{-d}}{2}, \ \ \textrm{with} \ a^2+d \equiv 0 \ (\textrm{mod} \ 2^5)$$
and the integer $c$ satisfies
$$c \equiv 1-\frac{a^2+d}{32} \ \ (\textrm{mod} \ 2),$$
then the generator $(-1)^{1+c}v(w/8)$ of the field $\Sigma_{\wp_2'^3} \Omega_f$ over $\mathbb{Q}$ is a periodic point of the algebraic function $\hat F(x)$ defined by (\ref{eqn:8.1}) and $v^2(w/8)$ is a periodic point of the function $\hat T(x)$ defined by (\ref{eqn:8.2}).
\label{thm:6}
\end{thm}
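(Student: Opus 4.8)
The plan is to reduce the entire statement to a single relation between Galois conjugates and then iterate it. Write $\eta = v(w/8)$, $\zeta = (-1)^{1+c}\eta$, and $n = \mathrm{ord}(\tau_2)$ in $\mathrm{Gal}(F_1/K)$, where $F_1 = \mathbb{Q}(\eta) = \Sigma_{\wp_2'^3}\Omega_f$ and $\tau_2 = \left(\frac{F_1/K}{\wp_2}\right)$. The first step is to verify that $f(\zeta,\zeta^{\tau_2}) = 0$ uniformly in the parity of $c$. If $c$ is odd then $(-1)^{1+c}=1$, so $\zeta = \eta$ and this is exactly the Corollary to Theorem \ref{thm:3}. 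If $c$ is even then $(-1)^{1+c}=-1$, so $\zeta = -\eta$; since $-1\in\mathbb{Q}$ is fixed by $\tau_2$ we have $\zeta^{\tau_2}=-\eta^{\tau_2}$, whence $f(\zeta,\zeta^{\tau_2})=f(-\eta,-\eta^{\tau_2})=0$ by Corollary 1 to Theorem \ref{thm:4}. Squaring this relation and invoking the passage from $f$ to $g$ recorded in Proposition \ref{prop:2} (that $f(x,y)=0$ forces $g(x^2,y^2)=0$) gives $g(v^2(w/8),v^2(w/8)^{\tau_2})=0$, where $v^2(w/8)=\zeta^2$.

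Next I would produce the cycles. Because $f$ and $g$ have rational coefficients, applying $\tau_2^k$ to the two relations yields $f(\zeta^{\tau_2^k},\zeta^{\tau_2^{k+1}})=0$ and $g(v^2(w/8)^{\tau_2^k},v^2(w/8)^{\tau_2^{k+1}})=0$ for all $k$. Writing $\zeta_k=\zeta^{\tau_2^k}$ and using $\zeta_n=\zeta_0$, the chain $f(\zeta_0,\zeta_1)=f(\zeta_1,\zeta_2)=\cdots=f(\zeta_{n-1},\zeta_0)=0$ exhibits $\zeta$ as a periodic point of $\hat F$ in the sense of the introduction, and likewise $v^2(w/8)$ is a periodic point of $\hat T$. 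Since $\mathbb{Q}(\zeta)=\mathbb{Q}(v^2(w/8))=F_1$ by Theorem \ref{thm:1} and $K\subseteq F_1$, both $\zeta$ and $v^2(w/8)$ generate $F_1$ over $K$; hence the $\zeta_k$, $0\le k<n$, are pairwise distinct, the cycle has length exactly $n$, and the minimal period is \emph{at most} $n$.

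The main work is the lower bound on the minimal period, obtained by reduction modulo $\wp_2$. Over $\mathbb{F}_2$ one checks $f(x,y)\equiv (y-1)(y-x^2)$ and $g(x,y)\equiv (y-1)(y-x^2) \pmod 2$, so for any $\wp_2$-integral $x$ the two roots of $f(x,y)=0$ reduce to $1$ and $x^2$ modulo a prime $\mathfrak{Q}$ above $\wp_2$. Suppose $\zeta=b_0,b_1,\dots,b_m=\zeta$ is any cycle with $f(b_i,b_{i+1})=0$; all $b_i$ are algebraic integers because $f$ is monic in $y$, so they reduce modulo a prime $\mathfrak{Q}$ over $\wp_2$ in a field containing them. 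From (\ref{eqn:6.4}) the element $\eta-\eta^{-1}=\pm 2/\pi$ has ideal $\wp_2'$, prime to $\wp_2$, so $\eta\not\equiv\pm1$ and therefore $\zeta\not\equiv 1\pmod{\wp_2}$ (and $v^2(w/8)=\eta^2\not\equiv 1$). If some step took the branch $b_{i+1}\equiv 1$, then every later $b_j\equiv 1$ (both branches at $x\equiv 1$ reduce to $1$), forcing $\zeta=b_m\equiv 1$, a contradiction; hence each step satisfies $b_{i+1}\equiv b_i^2\pmod{\mathfrak{Q}}$, giving $\zeta\equiv\zeta^{2^m}\pmod{\mathfrak{Q}}$. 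Because $N\wp_2=2$, the Artin symbol $\tau_2$ acts as $x\mapsto x^2$ modulo $\wp_2$ on $\wp_2$-integral elements (the congruence established after Theorem \ref{thm:5}), so $\zeta^{\tau_2^m}\equiv\zeta^{2^m}\equiv\zeta\pmod{\mathfrak{q}}$ in $F_1$. By Theorem \ref{thm:5} the discriminant of the minimal polynomial of $\eta$ over $K$ is prime to $\wp_2$, so the conjugates of $\zeta$ stay distinct modulo $\mathfrak{q}$; hence $\zeta^{\tau_2^m}=\zeta$, forcing $n\mid m$. The identical reduction applies to $g$ and $v^2(w/8)$, with distinctness of the conjugates of $v^2(w/8)=\eta^2$ following since squaring is injective in characteristic $2$. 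This gives minimal period exactly $n=\mathrm{ord}(\tau_2)$ for both $\hat F$ and $\hat T$.

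I expect the lower-bound argument to be the main obstacle: one must rule out the spurious branch $y\equiv 1$ and then upgrade the congruence $\zeta^{\tau_2^m}\equiv\zeta$ to an honest equality of conjugates via the $\wp_2$-integral basis of Theorem \ref{thm:5}. The surviving branch $y\equiv x^2$ is precisely the one realized by the single-valued $2$-adic map $T(x)$ with $v^2(w/8)^{\tau_2}=T(v^2(w/8))$, which is why the minimal period with respect to $T$ coincides with the order of $\tau_2$.
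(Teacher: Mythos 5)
Your proof of the stated theorem is correct and follows essentially the same route as the paper: the relation $f(\zeta,\zeta^{\tau_2})=0$ obtained from the corollaries to Theorems \ref{thm:3} and \ref{thm:4} according to the parity of $c$, iteration by powers of $\tau_2$ to close the cycle, and the factorization $g(x^2,y^2)=f(x,-y)\,f(x,y)$ to transfer the cycle to $\hat T$. Your third paragraph proves the stronger minimal-period assertion, which belongs to Theorem B rather than to Theorem \ref{thm:6} as stated; the paper establishes it separately at the end of Section 8 via the single-valued $2$-adic lift $T(x)\equiv x^2 \pmod{2}$, and your direct mod-$\wp_2$ branch analysis (ruling out the spurious branch $y\equiv 1$ using $\eta-\eta^{-1}=\pm 2/\pi$, then upgrading the congruence $\zeta^{\tau_2^m}\equiv\zeta$ to equality via Theorem \ref{thm:5}) is a valid equivalent of that argument.
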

\begin{proof}
Setting $\eta = (-1)^{1+c}v(w/8)$ and $F_1 = \mathbb{Q}(\eta) = \mathbb{Q}(\eta^2)$, we have from the corollaries to Theorems \ref{thm:3} and \ref{thm:4} that $f(\eta,\eta^{\tau_2}) = 0$, where $\displaystyle \tau_2 = \left(\frac{F_1/K}{\wp_2}\right)$ is an automorphism in $\textrm{Gal}(F_1/K)$.  If the order of $\tau_2$ is $n$, then applying powers of $\tau_2$ gives that
\begin{equation}
f(\eta, \eta^{\tau_2}) = f(\eta^{\tau_2}, \eta^{\tau_2^2}) = \cdots =  f(\eta^{\tau_2^{n-1}}, \eta) = 0,
\label{eqn:8.3}
\end{equation}
which implies that $\eta$ is a periodic point of $\hat F(x)$ of period $n$. \medskip

It is straightforward to check that
\begin{equation}
\hat F(x)^2 = \frac{1}{2}(x^4-4x^2+1) \pm \frac{1}{2}(x^2-1) \sqrt{x^4-6x^2+1} = \hat T(x^2)
\label{eqn:8.4}
\end{equation}
and that the minimal polynomial of $\hat F(x)^2$ over $\mathbb{Q}(x)$ is $g(x^2,y)$.
In particular, $f(x,y) = 0$ implies that $g(x^2,y^2) = 0$, since
$$g(x^2,y^2) = (-x^2 y + x^2 + y^2 + y) (x^2 y + x^2 + y^2 - y) = f(x,-y) f(x,y).$$
Hence, (\ref{eqn:8.3}) implies that
\begin{equation}
g(\eta^2, \eta^{2\tau_2}) = g(\eta^{2\tau_2}, \eta^{2\tau_2^2}) = \cdots =  g(\eta^{2\tau_2^{n-1}}, \eta^2) = 0,
\label{eqn:8.5}
\end{equation}
which shows that $\eta^2 = v(w/8)^2$ is a periodic point of $\hat T(x)$.
\end{proof}

As in the papers \cite{m1}-\cite{m3}, the minimal polynomials of periodic points of $\hat F(x)$ can be computed using iterated resultants involving its minimal polynomial $f(x,y)$.  We set
$$R^{(1)}(x,x_1) = f(x, x_1) = x^2 x_1 + x^2 + x_1^2 - x_1$$
and define, inductively,
$$R^{(n)}(x,x_n) = \textrm{Res}_{x_{n-1}}(R^{(n-1)}(x,x_{n-1}),f(x_{n-1},x_n)) \ \ n \ge 2.$$
Then the roots of the polynomial
$$R_n(x) = R^{(n)}(x,x), \ \ n \ge 1,$$
are the periodic points of $\hat F(x)$ whose minimal periods divide $n$.  See \cite[p. 727]{m1}.  For example, we compute that
\begin{align*}
R_1(x) & = x(x^2 + 2x - 1),\\
R_2(x) & = x(x^2 + 2x - 1)(x^4 - x^3 + x + 1),\\
R_3(x) & = x(x^2 + 2x - 1)(x^{12} - 5x^{11} + 2x^{10} + 10x^9 + 5x^8 + 23x^7\\
& \ \ \  - 8x^6 - 23x^5 + 5x^4 - 10x^3 + 2x^2 + 5x + 1),\\
R_4(x) & = x(x^2 + 2x - 1)(x^4 - x^3 + x + 1)(x^8 - x^7 + x^6 - 5x^5 + 5x^3 + x^2 + x + 1)\\
& \ \ \ \times (x^{16} + 5x^{15} - 18x^{14} - 75x^{13} + 137x^{12} + 105x^{11} + 38x^{10} + 185x^9\\
& \ \ \  - 300x^8 - 185x^7 + 38x^6 - 105x^5 + 137x^4 + 75x^3 - 18x^2 - 5x + 1).
\end{align*}

We now set $x = z+3$ in the function $\hat T(x)$, so that the square-root in $\hat T(x)$ has the $2$-adic expansion
$$\sqrt{x^2-6x+1} = \sqrt{z^2-8} = z \sqrt{1-\frac{8}{z^2}} = z \sum_{k=0}^\infty{(-1)^k\left({1/2 \atop k}\right) \frac{8^k}{z^{2k}}}.$$
We will show that this series is $2$-adically convergent for (roughly) half of the primitive periodic points of the algebraic function $\hat T(x)$ of a given period $n$ in the field $\textsf{K}_2(\sqrt{2})$, where $\textsf{K}_2$ is the maximal unramified, algebraic extension of the $2$-adic field $\mathbb{Q}_2$. \medskip

If we set
$$T(x) =  \frac{1}{2}(x^2-4x+1) + \frac{x-1}{2} \sqrt{x^2-6x+1},$$
then using the above series in $T(x)$ and splitting off the $k=0$ term, we find
$$T(x) = x^2 - 4x + 2 + (x-1)(x-3) \sum_{k=1}^\infty{(-1)^k 2^{2k-1}\left({1/2 \atop k}\right) \frac{2^k}{(x-3)^{2k}}},$$
for $x-3 \in \mathcal{O}^\times$, where $\mathcal{O}$ is the ring of integers in $\textsf{K}_2(\sqrt{2})$.  Since
$$(-1)^{k-1} 2^{2k-1}\left({1/2 \atop k}\right) = C_{k-1} \in \mathbb{Z}$$
is the Catalan sequence, it follows that
$$T(x) \equiv x^2 \ (\textrm{mod} \ 2), \ \ x-3 \in \mathcal{O}^\times.$$
Hence, $T(x)$ is a lift of the Frobenius automorphism for points $x$ in the set
$$\overline{\textsf{D}} = \{x \in \textsf{K}_2(\sqrt{2}): |x-3|_2 = 1\}.$$
Furthermore,
$$T(x) - 3 = (x-3)^2+2(x -3) - 4 - (x-1)(x-3) \sum_{k=1}^\infty{C_{k-1} \frac{2^k}{(x-3)^{2k}}}.$$
It follows that
\begin{equation}
|T(x)-3|_2 = |x-3|_2^2 = 1,
\label{eqn:8.6}
\end{equation}
and $T$ maps $\overline{\textsf{D}}$ to itself. \medskip

Now we prove

\begin{prop} We have the congruences
\begin{align*}
R^{(n)}(x,x_n) & \equiv (x^{2^n} + x_n)(x_n + 1)^{2^n-1} \ (\textrm{mod} \ 2);\\
R_n(x) & \equiv (x^{2^n} + x)(x + 1)^{2^n-1} \ (\textrm{mod} \ 2).
\end{align*}
\label{prop:13}
\end{prop}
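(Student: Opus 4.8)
The plan is to induct on $n$, exploiting that modulo $2$ the polynomial $f(x,y)=x^2y+x^2+y^2-y$ factors completely in a convenient way. Since $-y\equiv y\pmod 2$,
$$f(x,y)\equiv x^2(y+1)+y(y+1)=(y+1)(x^2+y)\pmod 2.$$
The base case $n=1$ is immediate, since $R^{(1)}(x,x_1)=f(x,x_1)\equiv(x^2+x_1)(x_1+1)^{2^1-1}\pmod 2$. For the inductive step I would reduce the defining resultant $R^{(n)}(x,x_n)=\textrm{Res}_{x_{n-1}}\!\big(R^{(n-1)}(x,x_{n-1}),f(x_{n-1},x_n)\big)$ modulo $2$, insert the inductive formula for $R^{(n-1)}$ and the factorization of $f$, and evaluate the resulting resultant over the field $\mathbb{F}_2(x,x_n)$ using multiplicativity.

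Writing $t=x_{n-1}$, the inductive hypothesis gives $R^{(n-1)}(x,t)\equiv(x^{2^{n-1}}+t)(t+1)^{2^{n-1}-1}\pmod 2$, while $f(t,x_n)\equiv(x_n+1)(t^2+x_n)\pmod 2$. Using the identities $\textrm{Res}_t(AB,C)=\textrm{Res}_t(A,C)\,\textrm{Res}_t(B,C)$ and $\textrm{Res}_t(A,CD)=\textrm{Res}_t(A,C)\,\textrm{Res}_t(A,D)$, valid over the field $\mathbb{F}_2(x,x_n)$, together with the root formula $\textrm{Res}_t(P,Q)=\textrm{lc}(P)^{\deg Q}\prod_{P(\alpha)=0}Q(\alpha)$, the computation breaks into three elementary pieces. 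The constant factor $x_n+1$ is raised to $\deg_t R^{(n-1)}=2^{n-1}$, contributing $(x_n+1)^{2^{n-1}}$; the factor $x^{2^{n-1}}+t$ against $t^2+x_n$ contributes $(x^{2^{n-1}})^2+x_n=x^{2^n}+x_n$; and the factor $(t+1)^{2^{n-1}-1}$ against $t^2+x_n$ contributes $(x_n+1)^{2^{n-1}-1}$, because $\textrm{Res}_t(t+1,t^2+x_n)=1+x_n$ in characteristic $2$. Multiplying, the exponent of $x_n+1$ is $2^{n-1}+(2^{n-1}-1)=2^n-1$, giving $R^{(n)}(x,x_n)\equiv(x^{2^n}+x_n)(x_n+1)^{2^n-1}\pmod 2$. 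The second congruence follows by setting $x_n=x$, since reduction mod $2$ commutes with this substitution and $R_n(x)=R^{(n)}(x,x)$.

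The step demanding genuine care — the main obstacle — is justifying that the resultant passes cleanly through reduction modulo $2$. The resultant is the determinant of the Sylvester matrix formed from the \emph{actual} degrees $\deg_t R^{(n-1)}$ and $\deg_t f(t,x_n)=2$; its reduction mod $2$ equals the resultant of the reduced polynomials (with the same formal degrees) only when no leading coefficient vanishes and no degree drops. I would secure this by carrying through the induction the additional invariant that $R^{(n)}(x,x_n)$ has degree exactly $2^n$ in its last variable as a polynomial over $\mathbb{Z}$, with leading coefficient odd. Exactness of the degree follows from the same root formula, since the top $x_n$-term of $\prod_{\alpha}f(\alpha,x_n)$ is $x_n^{2\deg_t R^{(n-1)}}=x_n^{2^n}$ with coefficient $\textrm{lc}_t(R^{(n-1)})^2\neq 0$, so no cancellation occurs; and the inductive mod-$2$ formula, whose leading $t$-coefficient equals $1$, forces $\textrm{lc}_t(R^{(n-1)})\equiv1\pmod 2$, so the integer degree is preserved under reduction. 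With degrees matching and leading coefficients nonzero mod $2$, the reduced Sylvester determinant is exactly the resultant of the reductions, the multiplicativity identities apply over $\mathbb{F}_2(x,x_n)$, and the computation of the preceding paragraph is rigorous.
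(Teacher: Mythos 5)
Your proposal is correct and follows essentially the same route as the paper: induction on $n$, the factorization $f(x,y)\equiv(y+1)(x^2+y)\pmod 2$, and evaluation of the resultant by the product-over-roots formula (you split it up via multiplicativity where the paper substitutes the roots $\pm\sqrt{-x_n}$ directly, which is the same computation). The one substantive addition is your verification that $R^{(n-1)}$ has $x_{n-1}$-degree exactly $2^{n-1}$ with leading coefficient a unit mod $2$, so that reduction modulo $2$ commutes with forming the resultant; the paper takes this for granted, and your invariant supplies the missing justification correctly.
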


\begin{proof}
We have $f(x,y)=x^2y+x^2+y^2-y$.\ \ So, for $n=1$, we get
\begin{align*}
R^{(1)}(x,x_1)=f(x,x_1)&=x^2x_1+x^2+x_1^2-x_1\\
 & \equiv x^2x_1+x^2+x_1^2+x_1\ (\textrm{mod}\ 2)\\
 & \equiv (x^2+x_1)\,(x_1+1)\ (\textrm{mod}\ 2).
\end{align*}
Hence,
$$R_1(x) \equiv (x^2+x)\,(x+1)\ (\textrm{mod}\ 2).$$
Now for the induction step, assume the result is true for $n-1$. Then,
\begin{align*}
R^{(n)}(x,x_n) & = \textrm{Res}_{x_{n-1}}(R^{(n-1)}(x,x_{n-1}),f(x_{n-1},x_n))\\
&\equiv\textrm{Res}_{x_{n-1}}((x^{2^{n-1}} + x_{n-1})(x_{n-1} + 1)^{2^{n-1}-1},(x_{n-1}^2+x_n)(x_n+1))\ \ (\textrm{mod}\ 2).
\end{align*}
By definition, the resultant of two polynomials $\displaystyle f=\sum_{i=0}^n{a_ix^i}$ and $\displaystyle g=\sum_{i=0}^m{b_ix^i}$, having roots $\alpha_1,\alpha_2,\ldots,\alpha_n$ and $\beta_1,\beta_2,\ldots,\beta_m$, respectively, is given by
$$\textrm{Res}(f,g)=a_n^m\prod_{i=1}^n{g(\alpha_i)},$$
and
$$\textrm{Res}(g,f)=(-1)^{mn}\textrm{Res}(f,g).$$
The roots of $(x_{n-1}^2+x_n)\,(x_n+1)$, as a polynomial in $x_{n-1}$, are $\pm\sqrt{-x_n}$. Hence,
\begin{align*}
\textrm{Res}_{x_{n-1}}&((x^{2^{n-1}} + x_{n-1})(x_{n-1} + 1)^{2^{n-1}-1},(x_{n-1}^2+x_n)(x_n+1))\\
&=(-1)^{2^{n-1}\cdot2}\big(x_n+1\big)^{2^{n-1}} \big(x^{2^{n-1}}+\sqrt{-x_n}\big)\big(\sqrt{-x_n}+1\big)^{2^{n-1}-1}\\
& \ \ \ \times\big(x^{2^{n-1}}-\sqrt{-x_n}\big) \big(-\sqrt{-x_n}+1\big)^{2^{n-1}-1}\\
&=(-1)^{2^{n}}\big(x_n+1\big)^{2^{n-1}} \big(x^{2^n}+x_n\big) \big(x_n+1\big)^{2^{n-1}-1}\\
&=\big(x^{2^n}+x_n\big) \big(x_n+1\big)^{2^n-1}.
\end{align*}
Hence, we obtain
\begin{align*}
R^{(n)}(x,x_n) & \equiv (x^{2^n} + x_n)(x_n + 1)^{2^n-1} \ (\textrm{mod} \ 2),\\
R_n(x) & \equiv (x^{2^n} + x)(x + 1)^{2^n-1} \ (\textrm{mod} \ 2),
\end{align*}
completing the induction.
\end{proof}

\noindent {\bf Corollary.} {\it The degree of $R_n(x)$ is $\textrm{deg}(R_n(x)) = 2^{n+1}-1$.} \medskip

\begin{proof} This follows from the proposition, if the leading coefficient of $R_n(x)$ is not divisible by $2$.  We refer the reader to the lemma in \cite[pp. 727-728]{m1} for a similar proof.
\end{proof}

The roots of the factor $x^{2^n} + x = x(x+1)\frac{x^{2^n-1}+1}{x+1} = x(x+1)h_n(x)$ other than $x=0,1$ have degree greater than $1$, and therefore satisfy $x-3 \not \equiv 0$ (mod $2$).  It follows from Hensel's Lemma that $2^n-1$ of the roots of $R_n(x)$ over $\mathbb{Q}_2$ have the property that $x-3 \in \mathcal{O}^\times$, and for these roots the series for $T(x)$ converges in $\textsf{K}_2$. \medskip

Now the argument at the end of the proof of Theorem \ref{thm:3} shows that $\eta \not \equiv 1$ (mod $\wp_2$), so that the image of $\eta$ in the completion $F_{1,\mathfrak{q}} \subset \textsf{K}_2$ of $F_1= \Sigma_{\wp_2'^3} \Omega_f$ with respect to a prime divisor $\mathfrak{q}$ of $\wp_2$ in $F_1$ satisfies $\eta^2-3 \in \mathcal{O}^\times$.  Hence, the series for $T(\eta^2)$ converges.  We claim now that $\eta^{2\tau_2} = T(\eta^2)$.  But $g(\eta^2,\eta^{2\tau_2}) = 0$ implies that $\eta^{2\tau_2}$ is one of the values of $\hat T(\eta^2)$.  The value different from $T(\eta^2)$ in $\textsf{K}_2$ is
\begin{align*}
T_1(\eta^2) & = \eta^4-4\eta^2+1-T(\eta^2)\\
& \equiv \eta^4-4\eta^2+1-\eta^4 \ (\textrm{mod} \ \mathfrak{q})\\
& \equiv 1 \ (\textrm{mod} \ \mathfrak{q}).
\end{align*}
But we also know $\eta^{2\tau_2}-3 = (\eta^2-3)^{\tau_2} \in \mathcal{O}^\times$, so that $\eta^{2\tau_2} \neq T_1(\eta^2)$.  This yields the following.

\begin{thm} If $w$ satisfies (\ref{eqn:6.1}), then the value $\eta = v(w/8)$ and the automorphism $\displaystyle \tau_2 = \left(\frac{F_1/K}{\wp_2}\right)$ satisfy
$$\eta^{2\tau_2} = T(\eta^2),$$
in the completion $F_{1,\mathfrak{q}} \subset \textsf{K}_2$ of $F_1= \Sigma_{\wp_2'^3} \Omega_f$ with respect to a prime divisor $\mathfrak{q}$ of $\wp_2$ in $F_1$, where
$$T(x) = x^2 - 4x + 2 - (x-1)(x-3) \sum_{k=1}^\infty{C_{k-1} \frac{2^k}{(x-3)^{2k}}}$$
converges for $x$ in $\overline{\textsf{D}} = \{x \in \textsf{K}_2(\sqrt{2}): |x-3|_2 = 1\}$.
\label{thm:7}
\end{thm}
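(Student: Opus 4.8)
The plan is to identify $\eta^{2\tau_2}$ as a specific branch of the algebraic function $\hat T$ at $\eta^2$ and then to use $2$-adic congruences to select that branch. First I would record the algebraic relation $g(\eta^2,\eta^{2\tau_2})=0$, where $\eta=v(w/8)$ and $g(x,y)=y^2-(x^2-4x+1)y+x^2$ is the minimal polynomial from Proposition~\ref{prop:2}. This comes from the corollaries to Theorems~\ref{thm:3} and~\ref{thm:4}: in both parities of $c$ those corollaries give $f(\varepsilon\eta,\varepsilon\eta^{\tau_2})=0$ for a common sign $\varepsilon=\pm1$, and the factorization $g(x^2,y^2)=f(x,-y)f(x,y)$ used in the proof of Theorem~\ref{thm:6} then forces $g(\eta^2,\eta^{2\tau_2})=0$. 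Hence $\eta^{2\tau_2}$ is one of the two roots $\hat T(\eta^2)$ of $g(\eta^2,y)=0$ in $y$, whose sum is $\eta^4-4\eta^2+1$ and whose product is $\eta^4$.

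Next I would pass to the completion $F_{1,\mathfrak{q}}\subset\textsf{K}_2$ and name the two branches $2$-adically: one is the convergent series $T(\eta^2)$, the other is $T_1(\eta^2)=\eta^4-4\eta^2+1-T(\eta^2)$. For this to make sense I must check that the series defining $T$ converges at $x=\eta^2$, i.e. that $\eta^2-3\in\mathcal{O}^\times$. Here I would invoke the congruence $\eta\not\equiv 1\ (\textrm{mod}\ \wp_2)$ proved at the end of the proof of Theorem~\ref{thm:3}: in the residue field at $\mathfrak{q}$ one has $3\equiv 1$ and $+1\equiv -1$, so $\eta^2-3\equiv(\eta-1)(\eta+1)\equiv(\eta-1)^2\ (\textrm{mod}\ \mathfrak{q})$, which is a unit exactly because $\eta\not\equiv 1$. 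Thus $\eta^2\in\overline{\textsf{D}}$ and $T(\eta^2)$ converges.

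Finally I would separate the two branches by their residues modulo $\mathfrak{q}$. Since $T(x)\equiv x^2\ (\textrm{mod}\ 2)$ on $\overline{\textsf{D}}$, we get $T(\eta^2)\equiv\eta^4\ (\textrm{mod}\ \mathfrak{q})$, whence $T_1(\eta^2)\equiv\eta^4-4\eta^2+1-\eta^4\equiv 1\ (\textrm{mod}\ \mathfrak{q})$, so that $T_1(\eta^2)-3\equiv -2\equiv 0$ is \emph{not} a unit at $\mathfrak{q}$. On the other hand $\eta^{2\tau_2}-3=(\eta^2-3)^{\tau_2}$ is the image under the decomposition-group element $\tau_2$ of a $\mathfrak{q}$-unit, hence is itself a $\mathfrak{q}$-unit. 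Comparing these, $\eta^{2\tau_2}\neq T_1(\eta^2)$, so $\eta^{2\tau_2}=T(\eta^2)$, which is the assertion. I expect the only genuinely non-formal point to be the unit verification $\eta^2-3\in\mathcal{O}^\times$ — equivalently, that $\eta$ avoids the residue $1$ modulo $\wp_2$ — since everything else is bookkeeping of the two branches and their reductions; the substance of that point was already secured in the proof of Theorem~\ref{thm:3}.
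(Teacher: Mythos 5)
Your proposal is correct and follows essentially the same route as the paper: establish $g(\eta^2,\eta^{2\tau_2})=0$, verify convergence of $T(\eta^2)$ from $\eta\not\equiv 1\ (\mathrm{mod}\ \wp_2)$, and rule out the second branch $T_1(\eta^2)\equiv 1\ (\mathrm{mod}\ \mathfrak{q})$ by comparing with the unit $\eta^{2\tau_2}-3=(\eta^2-3)^{\tau_2}$. The only additions are harmless bookkeeping (the explicit computation $\eta^2-3\equiv(\eta-1)^2$ and the sign discussion for the two parities of $c$), both of which the paper leaves implicit.
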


Since $\tau_2$ fixes the prime divisors of $\wp_2$, it extends naturally to an automorphism of $F_{1,\mathfrak{q}}$, and can be applied to the individual terms of the series representing $T(x)$.  Thus, we see inductively that
$$\eta^{2\tau_2^i} = T(\eta^{2\tau_2^{i-1}}) = T(T^{i-1}(\eta^2)) = T^i(\eta^2)$$
is the $i$-th iterate of $T(x)$ applied to $\eta^2$.  From this and the fact that $\mathbb{Q}(\eta^2) = F_1$ we see that the order of $\tau_2$ in $\textrm{Gal}(F_1/K)$ is the minimal period of the periodic point $\eta^2$, and that $\eta^2$ is a periodic point in the ordinary sense of the $2$-adic function $T(x)$.  This also shows that the minimal period of $\eta$ with respect to $\hat F(x)$ is $n = \textrm{ord}(\tau_2)$, since if $\eta$ had smaller minimal period $m$, then by the proof of Theorem \ref{thm:6}, $\eta^2$ would have period $m < n$ with respect to the function $T(x)$.  This completes the proof of the assertions of Theorem B of the Introduction regarding minimal periods.

\section{The periodic points of $\hat F(x)$ and a class number formula.}

In this section we show that the only periodic points of $\hat F(x)$ are the values given in Theorem \ref{thm:6}.  In fact, we will
prove the following.

\begin{thm} The only periodic points of the function $\hat F(x)$ in $\overline{\mathbb{Q}}$ are the fixed points $0, \sigma, \bar \sigma$ and the conjugates over $\mathbb{Q}$ of the values $v(w/8)$ in Theorem \ref{thm:6} (for odd $c$).
\label{thm:8}
\end{thm}

\begin{proof} Let $\tilde g(x,y) = x^2 y^2 +2y+x^2$.  Note that $\tilde g(x,y) = g(y,x)$ for the polynomial $g(x,y)$ in \cite[Thm. 2, p. 327]{m}.  By the results of that paper the numbers $\pi, \xi$ and their conjugates over $\mathbb{Q}$ (as $-d$ ranges over all discriminants $\equiv 1$ modulo $8$) are, together with $0$ and $-1$, the only periodic points of the algebraic function $\mathfrak{f}(z)$ defined by $\tilde g(z,\mathfrak{f}(z)) = 0$.  The assertion of the theorem will follow from the identity
\begin{equation}
(x^2-1)^2 (y^2-1)^2 \tilde g\left(\frac{2x}{x^2-1},\frac{2y}{y^2-1}\right) = 4f(x,y)(x^2y^2-x^2y+y+1).
\label{eqn:9.1}
\end{equation}
Here, as in Proposition \ref{prop:1}, $f(x,y) = x^2 y+x^2+y^2 - y$.  Let $\eta$ be a periodic point of $\hat F(x)$ in $\overline{\mathbb{Q}}$ which is distinct from its fixed points $0, \sigma, \bar \sigma$.  Then there are $\eta_1=\eta, \eta_2, \dots, \eta_n$ in $\overline{\mathbb{Q}}$ for which
\begin{equation}
f(\eta_1,\eta_2) = f(\eta_2,\eta_3) = \cdots = f(\eta_n,\eta_1) = 0.
\label{eqn:9.2}
\end{equation}
Setting $\lambda_i = \frac{2\eta_i}{\eta_i^2-1}$, equations (\ref{eqn:9.1}) and (\ref{eqn:9.2}) give that
\begin{equation}
\tilde g(\lambda_1,\lambda_2) = \tilde g(\lambda_2,\lambda_3) = \cdots = \tilde g(\lambda_n,\lambda_1) = 0.
\label{eqn:9.3}
\end{equation}
Note that $\eta_i \neq \pm 1$ since $\pm 1$ are preperiodic (and not periodic) for $f(x,y)$, since
$$f(\pm1,y) = y^2+1, \ \ f(\pm i,y) = y^2-2y-1, \ \ f(1 \pm \sqrt{2},y) = (y+1 \pm \sqrt{2})^2.$$
Equation (\ref{eqn:9.3}) implies that $\lambda_1$ is a periodic point of the function $\mathfrak{f}(z)$ defined above.  Also, $\lambda_i \neq 0, -1$ since $\eta_i \notin \{0, \sigma, \bar \sigma\}$.  By the results of \cite[Thm. 2]{m}, this shows that $\lambda_1$ must be a conjugate of the number $\pi$ for some discriminant $-d$ and is therefore a root of the polynomial $b_d(x)$.  (See Proposition \ref{prop:10}.)  Since $\lambda_1 = 2\eta/(\eta^2-1)$, this shows that $\eta$ is a root of the minimal polynomial $f_d(x)$ of $v(w/8)$, for $c$ odd, by (\ref{eqn:6.7}).  This completes the proof.
\end{proof}

The reason it suffices to take $c$ odd in this theorem is that if $c$ is even, meaning that $2^5 \ || \ a^2 +d$, then $2^6 \mid (a+16)^2+d$, so that $w + 8= \frac{a+16+\sqrt{-d}}{2} = w'$ satisfies (\ref{eqn:6.1}) with $c$ odd.  Then the infinite product formula for $v(\tau)$ shows that $v(w/8) = v(w'/8-1) = -v(w'/8)$, so that $-v(w/8) = v(w'/8)$ in Corollary 1 to Theorem \ref{thm:4}. \medskip

This theorem has the following consequence.  Let $F_1 = \Sigma_{\wp_2'^3}\Omega_f$ be the field generated by $v(w/8)$ in Theorem \ref{thm:1}.  Then $[F_1:\mathbb{Q}]=4h(-d)$ and the prime ideal $\wp_2$ is not ramified in $F_1/\mathbb{Q}$.  The field $F_1$ is the inertia field for $\wp_2$ in the field $\Sigma_8\Omega_f$, an extended ring class field over $K_d = \mathbb{Q}(\sqrt{-d})$.  As in Section 7, let $\displaystyle \tau_2 = \left(\frac{F_1/K_d}{\wp_2}\right)$ be the Artin symbol for $\wp_2$ in the extension $F_1/K_d$.  Now define the set of discriminants
\begin{equation}
\mathfrak{D}_{n,2} = \{-d < 0 \ |  -d \equiv 1 \ (\textrm{mod} \ 8) \ \textrm{and} \ \textrm{ord}(\tau_2) = n \ \textrm{in} \ \textrm{Gal}(F_1/K_d)\}.
\label{eqn:9.4}
\end{equation}

\begin{thm} If $n \ge 2$, we have the following relation between class numbers of discriminants in the set $\mathfrak{D}_{n,2}$:
\begin{equation}
\sum_{-d \in \mathfrak{D}_{n,2}}{h(-d)} = \frac{1}{2} \sum_{k \mid n}{\mu(n/k) 2^k}.
\label{eqn:9.5}
\end{equation}
\label{thm:9}
\end{thm}

\begin{proof}
This proof mirrors the arguments in \cite[pp.792-793, 806]{m3}.  First, define
\begin{equation}
\textsf{P}_n(x) = \prod_{k \mid n}{R_k(x)^{\mu(n/k)}}.
\label{eqn:9.6}
\end{equation}
We show that $\textsf{P}_n(x) \in \mathbb{Z}[x]$.  From Proposition \ref{prop:13} it is clear that $R_n(x)$, for $n > 1$, is divisible (mod $2$) by the $N$ irreducible (monic) polynomials $\bar h_i(x)$ of degree $n$ over $\mathbb{F}_2$,
where
$$N= \frac{1}{n}\sum_{k \mid n}{\mu(n/k)2^k},$$
and that these polynomials are simple factors of $R_n(x)$ (mod $2$).  It follows from Hensel's Lemma that $R_n(x)$ is divisible by distinct irreducible polynomials $h_i(x)$ of degree $n$ over $\mathbb{Z}_2$, the ring of integers in $\mathbb{Q}_2$, for $1 \le i \le N$, with $h_i(x) \equiv \bar h_i(x)$ (mod $2$). In addition, all the roots of $h_i(x)$ are periodic of minimal period $n$ and lie in the unramified extension $\textsf{K}_2$.  Furthermore, $n$ is the smallest index for which $h_i(x) \mid R_n(x)$ over $\mathbb{Q}_2$. \medskip

Now consider the identity
\begin{equation}
(\sigma x+1)^2(\sigma y+1)^2 f(\bar A(x),\bar A(y)) = 2^3 \sigma^2 f(y,x),
\label{eqn:9.7}
\end{equation}
where $\displaystyle \bar A(x) = \frac{-x+\sigma}{\sigma x+1}$, as in (\ref{eqn:3.3}).  If the periodic point $a$ of $\hat F(x)$, with minimal period $n > 1$, is a root of one of the polynomials $h_i(x)$, then $a$ is a unit in $\textsf{K}_2$, and for some $a_1, \dots, a_{n-1}$ we have
\begin{equation}
f(a,a_1)=f(a_1,a_2) = \cdots = f(a_{n-1},a)=0.
\label{eqn:9.8}
\end{equation}
Furthermore, $a \not \equiv 1 \ (\textrm{mod} \ \sqrt{2})$, since otherwise its reduction $a \equiv \bar a \equiv 1$ (mod $2$) would have degree $1$ over $\mathbb{F}_2$ (using that $\textsf{K}_2$ is unramified over $\mathbb{Q}_2$).  Hence, $a+1+\sqrt{2}$ is a unit in $\textsf{K}_2(\sqrt{2})$, which gives that $\sigma a+1$ is a unit, as well.  All of the $a_i$ satisfy $a_i \not \equiv 1$ (mod $\sqrt{2}$), since the congruence $f(1,y) \equiv (y+1)^2$ (mod $2$) has only $y \equiv 1$ as a solution.  Hence, if some $a_i \equiv 1$ (mod $\sqrt{2}$), then $a_j \equiv 1$ for $j >i$, which would imply that $a \equiv 1$ (mod $\sqrt{2}$), as well.  The elements $b_i=\bar A(a_i)$ are distinct and lie in $\textsf{K}_2(\sqrt{2})$ and satisfy
$$b_i - 1 \equiv \frac{-a_i+\sigma-\sigma a_i-1}{\sigma a_i+1} \equiv \frac{-2}{\sigma a_i + 1} \equiv 0 \ (\textrm{mod} \ \sqrt{2}).$$
The identity (\ref{eqn:9.7}) yields that
\begin{equation}
f(b,b_{n-1})=f(b_{n-1},b_{n-2})= \dots = f(b_1,b)=0
\label{eqn:9.9}
\end{equation}
in $\textsf{K}_2(\sqrt{2})$.  Hence, $b_i \equiv 1$ (mod $\sqrt{2}$), and the orbit $\{b,b_{n-1},\dots,b_1\}$ is distinct from all the orbits in (\ref{eqn:9.8}). \medskip

Now the map $\bar A(x)$ has order $2$, so it is clear that $b=\bar A(a)$ has minimal period $n$ in (\ref{eqn:9.9}), since otherwise $a=\bar A(b)$ would have period smaller than $n$.  It follows that there are at least $2N$ periodic orbits of minimal period $n>1$.  Noting that
$$R_1(x)= f(x,x) = \ x(x^2 + 2x - 1),$$
these distinct orbits and factors account for at least
\begin{equation*}
3 + \sum_{d \mid n,d>1}({2 \sum_{k \mid d}{\mu(d/k)2^k})} = -1+ 2\sum_{d \mid n}({\sum_{k \mid d}{\mu(d/k)2^k})} = 2 \cdot 2^n-1
\end{equation*}
roots, and therefore all the roots, of $R_n(x)$.  This shows that the roots of $R_n(x)$ are distinct and the expressions $\textsf{P}_n(x)$ are polynomials.  Furthermore, over $\textsf{K}_2(\sqrt{2})$ we have the factorization
\begin{equation}
\textsf{P}_n(x) = \pm \prod_{1 \le i \le N}{h_i(x) \tilde h_i(x)}, \ \ n>1,
\label{eqn:9.10}
\end{equation}
where $\tilde h_i(x) = c_i(\sigma x+1)^{n}h_i(\bar A(x))$, and the constant $c_i$ is chosen to make $\tilde h_i(x)$ monic. \medskip

By the results of Section 8, for each discriminant $-d \in \mathfrak{D}_{n,2}$ we have that $f_d(x) \mid \textsf{P}_n(x)$.  Furthermore, every root of $\textsf{P}_n(x)$ is a root of some $f_d(x)$, by Theorem \ref{thm:8}, where $ord(\tau_2) = n$ in order for the roots of $f_d(x)$ to have minimal period $n$.  It follows that
\begin{equation*}
\textsf{P}_n(x) = \tilde c_n \prod_{-d \in \mathfrak{D}_{n,2}}{f_d(x)},
\end{equation*}
for some constant $\tilde c_n$, and taking degrees on both sides and using (\ref{eqn:9.10}) gives the formula
$$2\sum_{k \mid n}{\mu(n/k) 2^k} = \sum_{-d \in \mathfrak{D}_{n,2}}{4h(-d)}.$$
The formula of the theorem follows.
\end{proof}

The result of Theorem \ref{thm:9} is the analogue of \cite[Thm.1.3]{m3} for the prime $2$ in place of $5$.  The factor $1/2$ in front is to be interpreted as $2/\phi(8)$, replacing the factor $2/\phi(5)$ in the result of \cite{m3}.  Also, see Conjecture 1 in the Introduction of that paper. \medskip

Theorem 8 will now be used to prove the corresponding fact for the algebraic function $\hat T(x)$ in Theorem 6.

\begin{thm}
The periodic points of the function $\hat T(x)$ of (\ref{eqn:8.2}) in $\overline{\mathbb{Q}}$ (or $\mathbb{C}$) are exactly the squares of the periodic points of the function $\hat F(x)$, i.e., the fixed points $0,\sigma^2,\bar\sigma^2$ and the conjugates over $\mathbb{Q}$ of the values $v ^2(w/8)$, where $w$ is given by (\ref{eqn:6.1}).
\label{thm:10}
\end{thm}

\begin{proof} As in the proof of Theorem \ref{thm:6}, the polynomials $g(x,y)=y^2-(x^2-4x+1)y+x^2$ and $f(x,y)=y^2+(x^2-1)y+x^2$ defining $\hat T$ and $\hat F$, respectively, satisfy the identity
$$g(x^2,y^2)=f(x,-y) f(x,y).$$
Let $\eta^2$ be a periodic point of $g(x,y)$ of period $n$. Then there exist $\eta_1^2,\eta_2^2,\ldots,\eta_{n-1}^2 \in \overline{\mathbb{Q}}$ such that
$$g(\eta^2,\eta_1^2)=g(\eta_1^2,\eta_2^2)=\cdots=g(\eta_{n-1}^2,\eta^2)=0.$$
This means that, for every $i=0,1,\ldots,n-1$, either
$$f(\eta_i,\eta_{i+1})=0 \ \ \text{or} \ \ f(\eta_i,-\eta_{i+1})=0, \ \ \text{where} \ \ \eta_0=\eta=\eta_n.$$
Now if $f(\eta_i,\eta_{i+1})=0$ for all $i$, then $\eta$ is a periodic point of $\hat F(x)$. \medskip

Otherwise, there exists an $i$ such that $f(\eta_i,\eta_{i+1}) \neq 0$, but $f(\eta_i,-\eta_{i+1}) = 0$. In this case, if $i<n-1$, replace $\eta_{i+1}$ by $-\eta_{i+1}$ in the next equation of the sequence, yielding $f(-\eta_{i+1},\eta_{i+2}) = 0$.  And if this happens for $i=n-1$, then simply replace $\eta$ by $-\eta$.  This works because $f(-x,y)=f(x,y)$.  In other words, in the chain of equations for $f$, whenever the second argument has a negative sign, choose the next first argument with the same negative sign. And in case the last equation has second argument $\eta$ with a negative sign, then choose the first argument of the first equation as $-\eta$ also.  Hence, there is a chain of equations $f(\eta_i,\eta_{i+1})=0$ beginning and ending with $\pm \eta$. Hence, $\pm \eta$ is a periodic point of $\hat F(x)$ in either case, which implies that $\eta^2$ is the square of a periodic point of $\hat F(x)$.  This completes the proof.
\end{proof}

With this theorem, we have completely proved all the statements in Theorem B of the Introduction.

\noindent Dept. of Mathematical Sciences

\noindent Indiana University -- Purdue University at Indianapolis (IUPUI)

\noindent 402 N. Blackford St., LD 270

\noindent Indianapolis, IN, 46202

\noindent email: pmorton@iupui.edu

\end{document}